\definecolor{darkergreen}{rgb}{0.0, 0.5, 0.0}
\definecolor{Blue}{RGB}{0,0,148}
\definecolor{keywordcolor}{rgb}{0.7, 0.1, 0.1}   
\definecolor{commentcolor}{rgb}{0.4, 0.4, 0.4}   
\definecolor{symbolcolor}{rgb}{0, 0, 0.8}    
\definecolor{tacticcolor}{rgb}{0, 0, 0.8}    
\definecolor{sortcolor}{rgb}{0.1, 0.5, 0.1}      
\newcommand*{\lean}[1]{\lstinline{#1}\xspace} 
\theoremstyle{plain}
\newtheorem{theorem}{Theorem}[section]
\newtheorem*{theorem_no}{Theorem}
\newtheorem{lemma}[theorem]{Lemma}
\newtheorem{proposition}[theorem]{Proposition}
\theoremstyle{definition}
\newtheorem{definition}[theorem]{Definition}
\newtheorem{remark}[theorem]{Remark}
\newcommand*{\N}{\mathbb{N}}
\newcommand*{\C}{\mathcal{C}}
\newcommand*{\D}{\mathcal{D}}
\newcommand*{\T}{\mathcal{T}}
\newcommand*{\Hom}{\operatorname{Hom}}
\newcommand*{\op}{\operatorname{op}}
\newcommand*{\dom}{\operatorname{dom}}
\newcommand*{\image}{\operatorname{im}}
\newcommand*{\Set}{\mathsf{Set}}
\newcommand*{\Sh}{\operatorname{Sh}}
\newcommand*{\Top}{\mathsf{Top}}
\newcommand*{\CompHaus}{\mathsf{CompHaus}}
\newcommand*{\Profinite}{\mathsf{Profinite}}
\newcommand*{\Stonean}{\mathsf{Stonean}}
\newenvironment*{g@n@riclist}[1]{%
    \begin{enumerate}[label=#1]
    }{%
    \end{enumerate}
    }
\newenvironment{list_conditions}{
    \begin{g@n@riclist}{\arabic*\textup{)}}
    }{%
    \end{g@n@riclist}
    }
\newenvironment{list_named_conditions}[1]{
    \begin{g@n@riclist}{#1-\arabic*\textup{)}}
    }{%
    \end{g@n@riclist}
    }
\newenvironment{list_tfae}{
    \begin{g@n@riclist}{\textup{(}\roman*\textup{)}}
    }{%
    \end{g@n@riclist}
    }
\newenvironment{list_cases}{
\begin{g@n@riclist}{\alph*\textup{)}}
}{%
\end{g@n@riclist}
}
\newcommand*{\Cech}{Čech\xspace}
\newcommand*{\mathlib}{\textsc{mathlib}\xspace} 
\newcommand*{\Lean}{\textsc{Lean}\xspace} 
\title{Categorical foundations of formalized condensed mathematics}
\author{Dagur Asgeirsson}
\address[D.~A.]{University of Copenhagen}
\email{dagur@math.ku.dk}
\author{Riccardo Brasca}
\address[R.~B.]{Institut de Mathématiques de Jussieu-Paris Rive Gauche, Université Paris-Cité}
\email{riccardo.brasca@imj-prg.fr}
\author{Nikolas Kuhn}
\address[N.~K.]{University of Oslo}
\email{ntkuhn@posteo.net}
\author{Filippo Alberto Edoardo Nuccio Mortarino Majno di Capriglio}
\address[F.~N.]{ Université Jean Monnet, CNRS, Ecole Centrale de Lyon, INSA Lyon, Universite Claude Bernard Lyon~1, ICJ~UMR5208, 42023 Saint-Etienne, France}
\email{filippo.nuccio@univ-st-etienne.fr}
\author{Adam Topaz}
\address[A.~T.]{University of Alberta}
\email{topaz@ualberta.ca}
\begin{document}

\begin{abstract}
    Condensed mathematics, developed by Clausen and Scholze over the last few years, proposes a generalization of topology with better categorical properties. 
    It replaces the concept of a topological space by that of a condensed set, which can be defined as a sheaf for the coherent topology on a certain category of compact Hausdorff spaces. 
    In this case, the sheaf condition has a fairly simple explicit description, which arises from studying the relationship between the coherent, regular and extensive topologies.
    In this paper, we establish this relationship under minimal assumptions on the category, going beyond the case of compact Hausdorff spaces. 
    Along the way, we also provide a characterization of sheaves and covering sieves for these categories. 
    All results in this paper have been fully formalized in the \Lean proof assistant.
\end{abstract}

\maketitle

\section{Introduction}
The main goal of condensed mathematics (see e.g.~\cite{analytic, condensed, complex}) is to provide a better framework to study the interplay between algebra and geometry. 
To do this, one has to generalize the notion of a topological space to obtain better categorical properties; the category of condensed sets achieves this remarkably well.
A condensed set is defined as a sheaf for the so-called \emph{coherent topology} on the category of compact Hausdorff spaces. 
The category of condensed sets contains a very large class of topological spaces as a full subcategory. In addition, it almost forms a topos\footnote{There are some set-theoretic issues that prevent it from satisfying all the axioms of a topos; these can be resolved in various ways and, for all practical purposes, the category of condensed sets can be regarded as a topos.}, and the category of condensed abelian groups is a particularly well-behaved abelian category.

The formalization of the theory of condensed sets started with the \emph{Liquid Tensor Experiment}, see~\cite{LTE, LTE_challenge}. 
In that work the authors formalized the definition and various properties of the category of condensed abelian groups and of liquid vector spaces, including the main result~\cite[Theorem 9.1]{analytic}, using the \Lean proof assistant. 
In~\S\ref{sec:mathlib} we will offer a brief outline both of \Lean and of its main mathematical library \mathlib.

Even if the achievements of the \emph{Liquid Tensor Experiment} are spectacular, most of the work is not suitable to be integrated into a large mathematical library like \mathlib. 
Indeed, a lot of results in the Liquid Tensor Experiment were stated and proven in an \emph{ad-hoc} way and are not applicable in other contexts. 
This approach contradicts many of the design decisions prevalent throughout \mathlib, which we briefly discuss in~\S\ref{subsec:mathlib_generality}.

The main goal of our work is to formalize the foundations of the theory of condensed sets in an organic way, being as general as possible in all the various prerequisites. 
Indeed, the present work has already been incorporated in the \mathlib library. 
Besides correctness, which is checked by \Lean, this ensures that the results are stated in a way that is compatible with the rest of the library and that they can be used by others.

The goal of this paper is to prove, in the most general setting, results relating the coherent, regular and extensive topologies on a category, as well as characterizations of their sheaves.
While the results we discuss in this paper are known to some experts as part of the folklore, we provide both a detailed exposition, while simultaneously minimizing various assumptions.
The more general approach we take in this paper was motivated primarily by the formalization of these results.

Throughout the text, we use the symbol \faExternalLink\xspace for external links. 
Almost every mathematical statement and definition will be accompanied by such a link directly to the source code for the corresponding statement in \mathlib. The only exceptions are results that we use in the informal proof but not in the formal one. In particular, all relevant results are completely formalized in \mathlib. In order for the links to stay usable, they are all to a fixed commit to the master branch (the most recent one at the time of writing).

Here is a brief outline of the paper.  In~\S\ref{sec:mathlib} we give a brief overview of the \Lean proof assistant and its mathematical library \mathlib, explaining the general philosophy behind the library and the main design decisions that have been taken, focusing on the aspects that are most relevant to the present work. In~\S\ref{sec:preliminaries}, we review the theory of sheaves for Grothendieck topologies as it is formalized in \mathlib: this section is standard, but we think it is a good idea to fix the notation and the terminology, as the literature is not always consistent. In~\S\ref{sec:effective}, we introduce the notions of strict, regular and effective epimorphism. We prove in Proposition~\ref{prop:TocCatEffectiveEpi} that the effective epimorphisms in the category~$\C$ of topological spaces are the quotient maps and Proposition~\ref{prop:CompHausEffectiveEpi} characterizes effective epimorphisms in $\C$ as the continuous surjections. Strict, regular, and effective epimorphisms are then used in~\S\ref{sec:topologies} to define the regular (\emph{resp.} extensive, coherent) topology on a category satisfying the technical condition of being preregular (\emph{resp.} finitary extensive, precoherent). We prove in Proposition~\ref{prop:precoherent_of_preregular_extensive} that a preregular and finitary extensive category is precoherent and in Proposition~\ref{prop:extensive_regular_generate_coherent} that the coherent topology is generated by the union of the regular and extensive topologies. In~\S\ref{sec:sheaves}, we study sheaves on these three topologies: first of all we prove in Propositions~\ref{prop:regular-subcanonical}, \ref{prop:extensive-subcanonical}, and~\ref{prop:coherent-subcanonical} that the three topologies are subcanonical. We then give in Propositions~\ref{prop:regularTopology.isSheaf_of_projective}, \ref{prop:isSheaf_iff_preservesFiniteProducts}, \ref{prop:regular_extensive_sheaf}, and~\ref{prop:regular_extensive_sheaf_projective} various conditions for a presheaf to be a sheaf (characterizing sheaves in terms of the preservation of finite products and equalizers). We then give in Proposition~\ref{prop:sheafEquiv} a condition for a functor\footnote{In this work we follow the convention that all functors are, by definition, covariant; we refer to contravariant functors as \emph{presheaves}.} to induce an equivalence between the categories of sheaves for certain topologies. In~\S\ref{sec:back_to_condensed} we apply our general categorical framework to the theory of condensed sets, proving our main theorems, that we now summarize.

Consider the following three categories, each containing the next as a full subcategory, and whose morphisms are continuous maps:
\begin{itemize}
    \item $\CompHaus$: the category of compact Hausdorff spaces~\href{https://github.com/leanprover-community/mathlib4/blob/743032e7ead097fb3e8ae5cd02d29cdd8899161c/Mathlib/Topology/Category/CompHaus/Basic.lean#L41-L49}{\faExternalLink}.
    \item $\Profinite$: the category of \emph{profinite} spaces, that we define, following \mathlib, as totally disconnected compact Hausdorff spaces. This category is equivalent to the pro-category of the category of finite sets (this last statement has not yet been formalized; see~\cite[Section 6]{dagur-itp-2024} for a more detailed discussion of the state of the category $\Profinite$ in \mathlib)~\href{https://github.com/leanprover-community/mathlib4/blob/743032e7ead097fb3e8ae5cd02d29cdd8899161c/Mathlib/Topology/Category/Profinite/Basic.lean#L50-L55}{\faExternalLink}.
    \item $\Stonean$: the category of \emph{Stonean} spaces, whose objects are extremally disconnected compact Hausdorff spaces~\href{https://github.com/leanprover-community/mathlib4/blob/743032e7ead097fb3e8ae5cd02d29cdd8899161c/Mathlib/Topology/Category/Stonean/Basic.lean#L43-L48}{\faExternalLink}. The condition of being extremally disconnected means that the closure of every open set is open. These spaces are precisely the projective objects in $\CompHaus$ (see~\cite[Theorem~2.5]{gleason} and \href{https://github.com/leanprover-community/mathlib4/blob/743032e7ead097fb3e8ae5cd02d29cdd8899161c/Mathlib/Topology/Category/Stonean/Basic.lean#L310-L319}{\faExternalLink}). It is easy to see that Stonean spaces are totally disconnected, so we have a fully faithful inclusion $\Stonean\subseteq\Profinite$ \href{https://github.com/leanprover-community/mathlib4/blob/743032e7ead097fb3e8ae5cd02d29cdd8899161c/Mathlib/Topology/Category/Stonean/Basic.lean#L131-L144}{\faExternalLink}.
\end{itemize}

Let $\C$ be any of these categories.
We prove in Proposition~\ref{prop:precoherent_of_preregular_extensive} and Proposition~\ref{prop:preregular_finitary_extensive} that the categories $\C$ fit into the general framework we describe in this paper.
As a consequence, we recover the following two key results (stated here as Theorem~\ref{thm:sheaves} and~\ref{thm:condensed set equivalence}) which have appeared early on in the theory of condensed mathematics~\cite[Definition~1.2 and Proposition~2.7]{condensed}.
\begin{theorem_no}
    We have the following characterizations of sheaves on $\C$.    
    \begin{itemize}
        \item  When $\C$ is $\CompHaus$ or $\Profinite$, a presheaf $X \colon \C^{\op} \to \Set$ is a sheaf for the coherent topology on $\C$ if and only if it satisfies the following two conditions:
        \begin{list_conditions}
            \item $X$ preserves finite products: in other words, for every finite family $(T_i)$ of objects of $\C$, the natural map 
            \[
                X\Bigl(\coprod_i T_i\Bigr) \longrightarrow \prod_i X(T_i)    
            \]
            is a bijection.
            \item For every surjection $\pi \colon S \to T$ in $\C$, the diagram 
            \[\begin{tikzcd}
                {X(T)} & {X(S)} & {X(S \times_T S)}
                \arrow["{X(\pi)}", from=1-1, to=1-2]
                \arrow[shift left, from=1-2, to=1-3]
                \arrow[shift right, from=1-2, to=1-3]
            \end{tikzcd}\]
            is an equalizer \textup{(}the two parallel morphisms being induced by the projections in the pullback\textup{)}. 
        \end{list_conditions}
        \item A presheaf $X \colon \Stonean^{\op} \to \Set$ is a sheaf for the coherent topology on $\Stonean$ if and only if it preserves finite products: in other words, for every finite family $(T_i)$ of object of $\C$, the natural map 
        \[
            X\Bigl(\coprod_i T_i\Bigr) \longrightarrow \prod_i X(T_i)    
        \]
        is a bijection.
    \end{itemize}
\end{theorem_no}
\begin{theorem_no}
    The inclusion functors $\Profinite \to \CompHaus$ and $\Stonean \to \CompHaus$ induce equivalences of categories between the categories of sheaves for the coherent topology on $\CompHaus$, $\Profinite$, and $\Stonean$.
\end{theorem_no}
Recall that a condensed set is defined as a sheaf for the coherent topology on $\CompHaus$. Thanks to the second theorem, the category of condensed sets is equivalent to the category of sheaves for the coherent topology on $\Profinite$ or $\Stonean$. 

In fact these theorems hold for very general target categories other than that of sets, they certainly hold for the category of modules over a ring, for example. Regarding condensed objects simply as product-preserving presheaves on $\Stonean$ allows us to perform many constructions ``objectwise'' on $\Stonean$. For example, limits and filtered colimits of condensed sets are given objectwise on $\Stonean$; in the setting of condensed abelian groups or modules, the situation is even better --- \emph{all} colimits are computed objectwise on $\Stonean$. Furthermore, epimorphisms of condensed objects in a sufficiently nice concrete category are simply those morphisms $X \to Y$ which satisfy the property that the induced map $X(S) \to Y(S)$ is surjective for every object~$S$ of $\Stonean$. These two facts are essential in proving that condensed abelian groups form an abelian category which satisfies all the same of Grothendieck's AB axioms as the category of abelian groups. This result has not yet made it into \mathlib, but is well within reach.

\section{Mathlib}\label{sec:mathlib}
The results we describe in this paper have all been formalized using the \Lean interactive theorem prover, and incorporated into its open-source formalized mathematical library \mathlib~\cite{mathlib}.
The \Lean community maintains \mathlib as a large monolith with a number of overarching design decisions, which must be taken into account in all mathematical contributions to it.
This section explains the particulars of \mathlib that played a key motivating role in the presentation results we discuss in this paper.
While we do not provide an introduction to the \Lean theorem prover itself, we refer the reader to~\cite{lean4} for a comprehensive discussion. 

\subsection{Mathematical cohesion} 

One of the key design decisions made in \mathlib is that it strives to be a \emph{cohesive} library.
This point of view manifests concretely in a few ways.
Most notably, it often means that mathematical concepts usually have one ``official'' definition in \mathlib, and various related definitions and lemmas are built around such official definitions (this collection of ancillary results is often referred to as ``the API'') allowing users to work with them effectively.
The importance of this approach cannot be understated when it comes to formalization of advanced mathematics.

\mathlib allows formalizers to efficiently use the constructions from the library, even when their work lies at the intersection of several subjects, which condensed mathematics certainly does. To take a small example, the definition of a condensed set mentions the category of compact Hausdorff spaces, and one frequently has to use both the topological properties of the objects of this category and the more abstract properties of the category itself. The cohesive nature of \mathlib ensures that the interplay between these two aspects of compact Hausdorff spaces runs smoothly. 
This is in contrast with the alternative approach where there are separate libraries for different areas of mathematics, which can potentially be problematic should the same concept appear in two different libraries following different conventions, since results from one library would not be directly compatible with results in the other

\subsection{The ``right'' generality}\label{subsec:mathlib_generality}

A related and equally important design decision in \mathlib is that mathematical contributions should be developed in the ``right'' level of generality.
Although the utility of this approach is clear --- a more general result applies in more contexts --- it is often more convenient for mathematicians to work in the correct level of generality \emph{for their current project}.
However, when making contributions to \mathlib, formalizers are encouraged to keep in mind the cohesive and interconnected nature of the library, since it is often impossible to know how an initial contribution may be used in the future, and in what context.

Nevertheless, it is important to mention that it is usually difficult to find the right level of generality for \mathlib at first.
It often happens that preexisting code in \mathlib is \emph{refactored} to bring it closer to \mathlib prescribed ideals.
In fact, such a refactoring process often occurs in conjunction or in parallel with API development as discussed above.

\subsection{Contribution Process}

Ensuring that the design decisions of \mathlib are maintained requires significant experience with the library.
In practice, this means that contributions must pass a process resembling peer review, whereby ``pull requests'' are opened for potential contributions, which are then reviewed by a team of reviewers and maintainers before being incorporated into the library.

\subsection{Condensed Mathematics}

Having discussed some of the key design decisions of \mathlib, and how these relate to contributions of formalized mathematics within the library, it should come as no surprise that the development of condensed mathematics in \mathlib follows the same lines.
The goal of this paper is to describe the mathematics behind the foundations of condensed mathematics in a way which is suitable for inclusion in \mathlib.
In fact, the general categorical approach we outline in this paper was originally \emph{motivated} by the goal of finding the ``right'' level of generality appropriate for its inclusion in \mathlib.   

\subsection{Size issues}\label{subsec:type_theory}
Condensed mathematics is known to raise subtle set-theoretic issues, see~\cite[Remark 1.3]{condensed}. These can be solved in different ways, one is explained in~\cite[Appendix to Lecture II]{condensed} and another in~\cite[1.2--1.4]{pyknotic}, the latter being closer to the approach used in \mathlib. One advantage of formalizing the theory is to guarantee that all these problems are solved in a precise way. Roughly speaking, the idea is to use Grothendieck's universes. 
These are more or less built into the axiomatic framework of \Lean, which is a version of dependent type theory relying on the calculus of inductive constructions. 
For a more detailed explanation of the foundations of \Lean, we refer the reader to~\cite{mario}.

The basic objects of the theory are \emph{terms} and \emph{types}. 
Every term has a type, and a type can be regarded as a collection of elements, which are the terms of that type. 
In this way, types replace sets in their everyday use in mathematics as ``collections of elements''. 
The notation \lean{a : A} is used to signify that \lean{a} is a term of the type \lean{A}.
To avoid an analogue of Russell's paradox known as Girard's paradox, \Lean uses a \emph{hierarchy of universes} indexed by the natural numbers
\begin{lstlisting}
  Type = Type 0
  Type 0 : Type 1
  Type 1 : Type 2
  ...
\end{lstlisting}
\textsc{Mathlib}'s definition of a category has two universe parameters $u$ and $v$. The definition consists of a ``set of objects'' \lean{(C : Type u)}, and for every pair of objects \lean{X Y : C}, of a ``set of morphisms'' \lean{(X ⟶ Y : Type v)}. Throughout this paper, we will use the word ``set'' informally in this way, letting \Lean take care of making sure that the ``set'' in question has a high enough universe level. 
For a concrete example, see Definition~\ref{def:grothendieck-topology} where we mention the \emph{top sieve} on an object $X$ in a category $\C$. This is supposed to be the ``set of all morphisms in $\C$ with target $X$''. When $\C$ is a large category, this is not a set in the sense of set-theoretic foundations, but as explained above, our use of the word ``set'' is not abusive in this case.

\mathlib's axioms are known to be equivalent to Zermelo--Fraenkel set theory plus the axiom of choice and the existence of $n$ inaccessible cardinals for all $n\in\N$, see~\cite[Corollary~6.8]{mario}. 
In particular, the existence of the hierarchy of universes (and their precise behavior with respect to various constructions) is provable in ZFC using a relatively weak assumption about large cardinals.

\section{Preliminaries}\label{sec:preliminaries}

\subsection{Coverages}
There are various ways to formulate the notion of a site and Grothendieck topology on a category $\C$, which allows us to define the notion of a \emph{sheaf} on $\C$.
In order to fix the terminology, we start this section by recalling some basic definitions and results in this area.
The terminology we describe here matches the terminology used in the corresponding definitions that can be found in \mathlib.

Fix a category $\C$ throughout this section.

\begin{definition}
    \href{https://github.com/leanprover-community/mathlib4/blob/743032e7ead097fb3e8ae5cd02d29cdd8899161c/Mathlib/CategoryTheory/Sites/Sieves.lean#L39-L41}{\faExternalLink}
    Let $X$ be an object of $\C$. 
    A \emph{presieve} $S$ on $X$ is a set of morphisms with target $X$. 
    If $f \in S$ is a morphism, we will use the notation $\dom f$ for the domain of $f$.
\end{definition}

\begin{remark}
  It is sometimes convenient to consider an \emph{indexed} family of morphisms $(f_i : X_i \to X)_{i \in I}$, indexed by some set $I$.
  Of course, any such family yields a presieve $S$ on $X$ which contains only the morphisms $f_i$ for $i \in I$.  
  Conversely, any presieve can be considered as a family indexed by its elements. 

  The notion of an indexed family of morphisms over $X$ is not exactly equivalent to that of a presieve over $X$, as an indexed family may have duplicates while a presieve cannot. 
  However, it is sometimes convenient to use indexed families as opposed to presieves, and we will allow ourselves to freely go back and forth as discussed above.
\end{remark}

\begin{definition}\label{def:compatible_family}
  \href{https://github.com/leanprover-community/mathlib4/blob/743032e7ead097fb3e8ae5cd02d29cdd8899161c/Mathlib/CategoryTheory/Sites/IsSheafFor.lean#L81-L91}{\faExternalLink} 
  \href{https://github.com/leanprover-community/mathlib4/blob/743032e7ead097fb3e8ae5cd02d29cdd8899161c/Mathlib/CategoryTheory/Sites/IsSheafFor.lean#L118-L134}{\faExternalLink}
  \href{https://github.com/leanprover-community/mathlib4/blob/743032e7ead097fb3e8ae5cd02d29cdd8899161c/Mathlib/CategoryTheory/Sites/IsSheafFor.lean#L366-L375}{\faExternalLink}
  Let $F \colon \C^{\op} \to \Set$ be a presheaf on $\C$ and let $S$ be a presieve on an object $X$ of $\C$. A \emph{family of elements} for $S$ is a collection $(x_f)_{f \in S}$ where $x_f \in F(\dom f)$ for all $f \in S$.
  We say that such a family of elements $(x_f)_f$ is \emph{compatible} provided that for all commutative squares in $\C$ of the form
  \[\begin{tikzcd}
      Y & {\dom f} \\
      {\dom f'} & X
      \arrow["{g}", from=1-1, to=1-2]
      \arrow["{f}", from=1-2, to=2-2]
      \arrow["{g'}"', from=1-1, to=2-1]
      \arrow["{f'}"', from=2-1, to=2-2],
  \end{tikzcd}\]
  with $f, f' \in S$, one has $F(g)(x_f) = F(g')(x_{f'})$.
  We say that $x \in F(X)$ is an \emph{amalgamation} for $(x_f)_{f\in S}$ if $F(f)(x) = x_f$ for all $f \in S$.
\end{definition}

\begin{definition}
  \href{https://github.com/leanprover-community/mathlib4/blob/743032e7ead097fb3e8ae5cd02d29cdd8899161c/Mathlib/CategoryTheory/Sites/IsSheafFor.lean#L437-L447}{\faExternalLink}
  We say that a presheaf $F \colon \C^{\op} \to \Set$ \emph{is a sheaf for the presieve} $S$ if for every compatible family of elements for $S$ there exists a unique amalgamation.
\end{definition} 

\begin{remark}\label{rmk:sheaf_with_pb}
If a presieve $S$ on $X$ is constructed out of an indexed family $(f_i : X_i \to X)_{i \in I}$ such that for all $i, j \in I$, the pullback $X_i\times_X X_j$ exists, one can rephrase the sheaf condition for the presieve as saying that the diagram

    \[\begin{tikzcd}
        {F(X)} & {\prod\limits_{i \in I} F(X_i)} & {\prod\limits_{i, j \in I} F(X_i \times_X X_j)}
        \arrow[from=1-1, to=1-2]
        \arrow[shift left, from=1-2, to=1-3]
        \arrow[shift right, from=1-2, to=1-3]
    \end{tikzcd}\]
is an equalizer, where the map on the left is given by the collection $\bigl(F(f_i)\bigr)_{i\in I}$ and the two parallel maps are induced by the projections in the pullbacks. \href{https://github.com/leanprover-community/mathlib4/blob/743032e7ead097fb3e8ae5cd02d29cdd8899161c/Mathlib/CategoryTheory/Sites/IsSheafFor.lean#L792-L795}{\faExternalLink}
\end{remark}

\begin{definition}
  \href{https://github.com/leanprover-community/mathlib4/blob/743032e7ead097fb3e8ae5cd02d29cdd8899161c/Mathlib/CategoryTheory/Sites/Coverage.lean#L136-L151}{\faExternalLink}
  A \emph{coverage} on $\C$ is the datum of a set of presieves on each object $X$ of $\C$, called \emph{covering presieves}, satisfying the following property: 
  For every morphism $f \colon X \to Y$ in $\C$ and every covering presieve $S$ on $Y$, there exists a covering presieve $T$ on $X$ such that for each $g \in T$, the composition $f \circ g$ factors through some morphism $h \in S$.
\end{definition}

\begin{definition}
  \href{https://github.com/leanprover-community/mathlib4/blob/743032e7ead097fb3e8ae5cd02d29cdd8899161c/Mathlib/CategoryTheory/Sites/Sieves.lean#L266-L274}{\faExternalLink}
  A \emph{sieve} $S$ on an object $X$ of $\C$ is a presieve on $X$ which is \emph{downwards closed} in the sense that for each $f \in S$ and every $g$ that is composable with $f$, we have that $f \circ g \in S$. 
  The sieve $\langle R\rangle$ \emph{generated} by a presieve $R$ is the sieve consisting of all morphisms that factor through a morphism of $R$; this is the smallest sieve containing $R$. 
  We also call $\langle R\rangle$ the \emph{sieve associated to $R$}.
\end{definition}

\begin{remark}\label{rem:cocone}
  \href{https://github.com/leanprover-community/mathlib4/blob/743032e7ead097fb3e8ae5cd02d29cdd8899161c/Mathlib/CategoryTheory/Sites/Sieves.lean#L53-L72}{\faExternalLink}
  A sieve $S$ on $X$ can be regarded as a full subcategory of the overcategory $\C_{/X}$, and thus it comes equipped with a forgetful functor $S \to \C$.
  The sieve $S$ induces a cocone over this functor, whose cocone point is $X$, and whose coprojections are the morphisms in $S$. This cocone will be used later.
\end{remark}

\begin{proposition}
  \href{https://github.com/leanprover-community/mathlib4/blob/743032e7ead097fb3e8ae5cd02d29cdd8899161c/Mathlib/CategoryTheory/Sites/IsSheafFor.lean#L619-L634}{\faExternalLink}
  Let $X$ be an object in $\C$ and let $S$ be a presieve on $X$. A presheaf $F$ is a sheaf for $S$ if and only if it is a sheaf for $\langle S \rangle$.
\end{proposition}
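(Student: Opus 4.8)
The plan is to show that the compatible families of elements for $S$ and for $\langle S \rangle$ are in a natural bijection preserving amalgamations; since being a sheaf for a presieve refers only to compatible families and their amalgamations, the two sheaf conditions will then be literally equivalent. First I would set up two maps. In one direction, because $S \subseteq \langle S \rangle$, any family of elements $(y_h)_{h \in \langle S \rangle}$ for $\langle S \rangle$ restricts to a family $(y_f)_{f \in S}$ for $S$; since the commutative squares with both slanted legs in $S$ form a subcollection of those with both legs in $\langle S \rangle$, this restriction carries compatible families to compatible families. In the other direction, given a compatible family $(x_f)_{f \in S}$ for $S$, I would define an extension $(y_h)_{h \in \langle S \rangle}$ by setting $y_h := F(g)(x_f)$ whenever $h = f \circ g$ with $f \in S$.

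The main obstacle is the well-definedness of this extension: a morphism $h \in \langle S \rangle$ may factor through $S$ in many ways, so I must check that $F(g)(x_f)$ is independent of the chosen factorization. If $h = f \circ g = f' \circ g'$ with $f, f' \in S$, then the square with top edge $g$, left edge $g'$, right edge $f$, and bottom edge $f'$ commutes, its outer composite being $h$. Applying the compatibility hypothesis for $(x_f)_{f \in S}$ to precisely this square yields $F(g)(x_f) = F(g')(x_{f'})$, which is exactly the required independence. Granting this, I would verify that $(y_h)_{h \in \langle S \rangle}$ is itself compatible: given a commutative square whose two slanted legs lie in $\langle S \rangle$, I factor each leg through a morphism of $S$ and reduce the desired identity to the compatibility of $(x_f)_{f \in S}$ together with the functoriality of $F$.

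With both maps in hand, I would check that they are mutually inverse. Extending and then restricting returns the original family, since for $f \in S$ one may take the factorization $f = f \circ \mathrm{id}$, giving $y_f = x_f$. For the other composite, starting from a compatible family $(y_h)_{h \in \langle S \rangle}$, restricting and re-extending sends $h = f \circ g$ to $F(g)(y_f)$; applying compatibility of $(y_h)$ to the commutative square with top edge $g$, left edge $\mathrm{id}$, right edge $f$, and bottom edge $h$ (with $f, h \in \langle S \rangle$) shows $F(g)(y_f) = y_h$, so the original family is recovered.

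Finally I would observe that this bijection carries amalgamations to amalgamations. An element $x \in F(X)$ amalgamates $(x_f)_{f \in S}$ exactly when $F(f)(x) = x_f$ for all $f \in S$; if so, then for any $h = f \circ g$ in $\langle S \rangle$ functoriality gives $F(h)(x) = F(g)(F(f)(x)) = F(g)(x_f) = y_h$, so $x$ amalgamates the extended family, and the converse follows by restricting to $S$. Hence existence and uniqueness of amalgamations holds for $S$ if and only if it holds for $\langle S \rangle$, which is the claim. The only genuinely delicate point is the well-definedness step; the remaining verifications are routine diagram chases using the functoriality of $F$.
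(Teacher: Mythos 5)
Your proof is correct. The paper itself gives no argument for this proposition, deferring to \cite[Lemma C.2.1.3]{elephant} and to the \mathlib formalization; the argument you give --- a bijection between compatible families for $S$ and for $\langle S\rangle$ that preserves amalgamations, with the well-definedness of the extension $y_{f\circ g}:=F(g)(x_f)$ secured by applying the compatibility condition (which, crucially, is stated for arbitrary commutative squares, not just pullback squares) to the square formed by two factorizations --- is precisely the standard proof found in those sources. All the steps you flag as routine (compatibility of the extended family, the two composites being identities, and the transport of amalgamations via $F(f\circ g)=F(g)\circ F(f)$) do go through as you describe.
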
 
\begin{proof}
  See~\cite[Lemma C.2.1.3]{elephant} or \mathlib.
\end{proof}

\begin{definition}\label{def:pullback_sieve}
  \href{https://github.com/leanprover-community/mathlib4/blob/743032e7ead097fb3e8ae5cd02d29cdd8899161c/Mathlib/CategoryTheory/Sites/Sieves.lean#L512-L518}{\faExternalLink}
  The \emph{pullback} of a sieve $S=(g_i \colon Y_i\to Y)_{i\in I}$ on $Y$ along a morphism $f \colon X \to Y$ is the sieve on $X$ consisting of all morphisms $g\colon Y_i\to X$ (for $i\in I$) such that $f \circ g \in S$. It is denoted $f^*S$.

\end{definition}

\begin{definition}\label{def:grothendieck-topology}
  \href{https://github.com/leanprover-community/mathlib4/blob/743032e7ead097fb3e8ae5cd02d29cdd8899161c/Mathlib/CategoryTheory/Sites/Grothendieck.lean#L63-L88}{\faExternalLink}
  A \emph{Grothendieck topology} on $\C$ is the datum of a set of sieves on each object $X$ of $\C$, called \emph{covering sieves} satisfying the following properties:
  \begin{list_named_conditions}{GT}
      \item The \emph{top sieve} --- consisting of all morphisms in $\C$ with target $X$ --- is a covering sieve on $X$. 
    \label{point:GroTop_top}
      \item For every covering sieve $S$ on $Y$ and every morphism $f \colon X \to Y$, the pullback $f^*S$ is a covering sieve on $X$. 
    \label{point:GroTop_pullback}
      \item Given a covering sieve $S$ on $Y$, suppose another sieve $R$ on $Y$ satisfies the property that for every $f : X \to Y \in S$, $f^*R$ is a covering sieve on $X$. Then $R$ is also a covering sieve on $Y$.
    \label{point:GroTop_closed}
  \end{list_named_conditions}

\end{definition}

\begin{lemma}\label{lem:superset_covering}
  \href{https://github.com/leanprover-community/mathlib4/blob/743032e7ead097fb3e8ae5cd02d29cdd8899161c/Mathlib/CategoryTheory/Sites/Grothendieck.lean#L140-L150}{\faExternalLink}
  Let $\T$ be a Grothendieck topology on $\C$, let $X$ be an object of $\C$, and $S$ and $R$ be two sieves on $X$ such that $S$ is contained in $R$ \textup{(}meaning that every morphism in $S$ is in $R$\textup{)}. If $S$ is a covering sieve for $\T$, then $R$ is a covering sieve as well. 
\end{lemma}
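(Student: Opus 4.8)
The plan is to deduce this from axiom~\ref{point:GroTop_closed}, using the hypothesis that $S$ is covering as the ``seed'' covering sieve. Concretely, to prove that $R$ is a covering sieve on $X$, I would invoke~\ref{point:GroTop_closed} with $S$ playing the role of the given covering sieve: it then suffices to check that for every morphism $f \colon Z \to X$ belonging to $S$, the pullback sieve $f^*R$ is a covering sieve on $Z$.

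The key observation is that for such an $f$, the pullback $f^*R$ is in fact the \emph{top} sieve on $Z$. Indeed, since $S$ is contained in $R$ and $f \in S$, we have $f \in R$. Because $R$ is a sieve, it is downwards closed, so for every morphism $g$ with target $Z$ (hence composable with $f$) we get $f \circ g \in R$. By the definition of the pullback sieve (Definition~\ref{def:pullback_sieve}), this says precisely that $g \in f^*R$. As $g$ was an arbitrary morphism with target $Z$, the sieve $f^*R$ contains all such morphisms, i.e.\ it is the top sieve on $Z$.

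I would then conclude by applying axiom~\ref{point:GroTop_top}, which guarantees that the top sieve on $Z$ is a covering sieve. Thus $f^*R$ is covering for every $f \in S$, and~\ref{point:GroTop_closed} yields that $R$ is a covering sieve on $X$, as desired.

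I do not anticipate a genuine obstacle here: the only point requiring a small amount of care is unwinding the definition of $f^*R$ together with the downward-closedness of $R$ to recognize the pullback as the top sieve. Once that identification is made, the two axioms~\ref{point:GroTop_top} and~\ref{point:GroTop_closed} close the argument immediately.
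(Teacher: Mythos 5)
Your proof is correct and follows essentially the same route as the paper's: apply axiom~\ref{point:GroTop_closed} with $S$ as the covering sieve, observe that $f^*R$ is the top sieve for each $f \in S$, and conclude with axiom~\ref{point:GroTop_top}. The only cosmetic difference is that you invoke downward-closedness of $R$ (via $f \in R$) where the paper uses downward-closedness of $S$ followed by the inclusion $S \subseteq R$; both are equally valid.
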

\begin{proof}
    By axiom~\ref{point:GroTop_closed}, it suffices to show that for every $f\colon Y \to X$ in $S$, $f^*R$ is a covering sieve of $Y$. By axiom~\ref{point:GroTop_top} it suffices to show that $f^*R$ contains every morphism to $Y$. So let $g \colon Z \to Y$ be a morphism. Since $f \circ g$ is in $S$, it is in $R$, meaning that $g$ is in $f^*R$, as desired.
%
\end{proof}

\begin{definition}\label{def:Groth_top_coverage}
  \href{https://github.com/leanprover-community/mathlib4/blob/743032e7ead097fb3e8ae5cd02d29cdd8899161c/Mathlib/CategoryTheory/Sites/Coverage.lean#L159-L172}{\faExternalLink}
  \href{https://github.com/leanprover-community/mathlib4/blob/743032e7ead097fb3e8ae5cd02d29cdd8899161c/Mathlib/CategoryTheory/Sites/Coverage.lean#L206-L239}{\faExternalLink}
  The coverage associated to a Grothendieck topology $\T$ is the coverage whose covering presieves are those whose associated sieve is a covering sieve in $\T$. 
  The Grothendieck topology generated by a coverage $\mathcal{S}$ is the intersection of all Grothendieck topologies whose associated coverage contains $\mathcal{S}$.
\end{definition}

Another definition of the Grothendieck topology~$\T$ generated by a coverage can be given in terms of a \emph{saturation} process. To define this, we start by ordering the collections of sieves on an object $X$ by objectwise inclusion; given a coverage $\mathcal{S}$, its \emph{saturation} is the smallest family $\bigr(C(X)\bigl)_{X\in\C}$ of collections of sieves satisfying:
\begin{list_named_conditions}{Sat}
    \item For every object $X$, the top sieve on $X$ is in $C(X)$.
		\label{point:Saturation_top}
    \item For every object $X$ and every covering presieve $S$ on $X$ in $\mathcal{S}$, we have $\langle S \rangle \in C(X)$.
		\label{point:Saturation_pullback}
    \item For every object $X$ and every pair $S, R$ of sieves on $X$ such that $S \in C(X)$ and such that for each $f \in S$ the pullback $f^*R$ belongs to $C(Y)$, we have that $R$ lies in $C(X)$.
		\label{point:Saturation_closed}
\end{list_named_conditions}
In terms of the dependent type theory underlying \Lean, requiring that this be ``the smallest family'' with a certain property is particularly handy, as it can be formalized in terms of \emph{inductive types}, a notion that lies at the very core of the foundational set-up of \Lean and therefore whose implementation and development is remarkably well integrated. 
This inductive construction is the one that is currently implemented in \mathlib as follows \href{https://github.com/leanprover-community/mathlib4/blob/743032e7ead097fb3e8ae5cd02d29cdd8899161c/Mathlib/CategoryTheory/Sites/Coverage.lean#L177-L187}{\faExternalLink}:

\begin{lstlisting}
inductive saturate (K : Coverage C) : (X : C) → Sieve X → Prop where
  | of (X : C) (S : Presieve X) (hS : S ∈ K X) : saturate K X (Sieve.generate S)
  | top (X : C) : saturate K X ⊤
  | transitive (X : C) (R S : Sieve X) :
    saturate K X R →
    (∀ ⦃Y : C⦄ ⦃f : Y ⟶ X⦄, R f → saturate K Y (S.pullback f)) →
    saturate K X S 
\end{lstlisting}

To prove that the saturation of $\mathcal{S}$ is in fact a Grothendieck topology, axioms~\ref{point:GroTop_top} and~\ref{point:GroTop_closed} follow at once from the defining properties~\ref{point:Saturation_top} and~\ref{point:Saturation_closed} of the saturation. Verifying property~\ref{point:GroTop_pullback} requires a bit more work and is achieved by applying the principle of induction on this inductive type.
The formalization of this property is \href{https://github.com/leanprover-community/mathlib4/blob/743032e7ead097fb3e8ae5cd02d29cdd8899161c/Mathlib/CategoryTheory/Sites/Coverage.lean#L206-L239}{\faExternalLink}:

\begin{lstlisting}
def toGrothendieck (K : Coverage C) : GrothendieckTopology C where
  sieves := saturate K
  top_mem' := .top
  pullback_stable' := by ... --the inductive proof mentioned above
  transitive' X S hS R hR := .transitive _ _ _ hS hR
\end{lstlisting}
 
It follows quite easily that the definition through saturations coincides with the one in Definition~\ref{def:Groth_top_coverage}, an equivalence whose proof is formalized in the theorem~\href{https://github.com/leanprover-community/mathlib4/blob/743032e7ead097fb3e8ae5cd02d29cdd8899161c/Mathlib/CategoryTheory/Sites/Coverage.lean#L271-L286}{\faExternalLink}:

\begin{lstlisting}
theorem toGrothendieck_eq_sInf (K : Coverage C) : toGrothendieck _ K =
    sInf {J | K ≤ ofGrothendieck _ J } := by ...
\end{lstlisting}

\begin{definition}
  \href{https://github.com/leanprover-community/mathlib4/blob/743032e7ead097fb3e8ae5cd02d29cdd8899161c/Mathlib/CategoryTheory/Sites/SheafOfTypes.lean#L70-L76}{\faExternalLink}
  Let $\T$ be a Grothendieck topology on $\C$. A presheaf $F \colon \C^{\op} \to \Set$ is a \emph{sheaf} for $\T$ if it is a sheaf for every covering sieve.
\end{definition}

\begin{proposition}\label{prop:Presieve.isSheaf_coverage}
  \href{https://github.com/leanprover-community/mathlib4/blob/743032e7ead097fb3e8ae5cd02d29cdd8899161c/Mathlib/CategoryTheory/Sites/Coverage.lean#L321-L394}{\faExternalLink}
  If a Grothendieck topology $\T$ is generated by a coverage, then a presheaf is a sheaf if and only if it is a sheaf for every covering presieve in the coverage. 
\end{proposition}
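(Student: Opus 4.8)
The plan is to treat the two implications separately; the forward one is routine and the converse carries all the content. For the forward implication, suppose $F$ is a sheaf for $\T$ and let $S$ be a covering presieve of the coverage. By property~\ref{point:Saturation_pullback} (the \lean{of} constructor of \lean{saturate}) the generated sieve $\langle S\rangle$ is a covering sieve of $\T$, so $F$ is a sheaf for $\langle S\rangle$, and hence for $S$ by the proposition identifying sheaves for $S$ with sheaves for $\langle S\rangle$. From now on I assume $F$ is a sheaf for every covering presieve of the coverage and aim to show it is a sheaf for every covering sieve of $\T$.

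Since $\T$ is the saturation of the coverage, I would argue by induction on the inductive predicate \lean{saturate}, whose three constructors correspond to~\ref{point:Saturation_top},~\ref{point:Saturation_pullback}, and~\ref{point:Saturation_closed}. The naive statement ``$F$ is a sheaf for $T$'' does not survive the transitive constructor, so I would instead prove the stronger statement that $F$ is a sheaf for $g^{*}T$ for \emph{every} morphism $g$ into the base of $T$; equivalently, that $T$ lies in the largest pullback-stable family of sieves for which $F$ is a sheaf. This strengthening is exactly what makes the induction close. The constructor~\ref{point:Saturation_top} is trivial, as $g^{*}$ of the top sieve is the top sieve and every presheaf is a sheaf for the top sieve (the amalgamation being the value at the identity). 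For~\ref{point:Saturation_closed} one is handed a sieve $R$ lying in the strengthened family (by induction) together with, for each $f\in R$, the sieve $f^{*}S$ also in the family; after pulling back along an arbitrary $g$ and using that $g^{*}R$ and the sieves $(g\circ f)^{*}S=f^{*}(g^{*}S)$ again belong to the family, this reduces to the locality statement for a single presheaf: the amalgamations over the various $f^{*}S$ glue, through the sheaf condition for $g^{*}R$, to an amalgamation over $g^{*}S$.

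The heart of the argument is the constructor~\ref{point:Saturation_pullback}: for a covering presieve $S$ of the coverage on an object $X$, one must show $F$ is a sheaf for $g^{*}\langle S\rangle$ for \emph{every} $g\colon Y\to X$. This is precisely where the defining property of a coverage enters, and it is the crux of the whole proof. That property yields a covering presieve $T$ on $Y$ each of whose members, composed with $g$, factors through $S$, so that $\langle T\rangle\subseteq g^{*}\langle S\rangle$; since $F$ is a sheaf for $T$ by hypothesis, it is a sheaf for $\langle T\rangle$. A gluing argument — applying the coverage property once more on the domains to compare the two restrictions of any candidate amalgamations and invoking the sheaf condition for the resulting covering presieves — then upgrades this to the sheaf condition for the larger sieve $g^{*}\langle S\rangle$. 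I expect this upgrade, together with the companion locality statement in the transitive step, to be the main obstacle. The difficulty is intrinsic: a presheaf that is a sheaf for a sieve need \emph{not} be a sheaf for a larger sieve, so one cannot simply pass from $\langle T\rangle$ to $g^{*}\langle S\rangle$ directly. It is resolved only by carrying the ``for every pullback'' strengthening through the entire induction and by repeatedly using the coverage property to refine pullbacks of covering presieves back into covering presieves.
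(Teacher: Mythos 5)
Your proposal is correct and matches the paper's approach: the paper's proof simply defers to \cite[Proposition C.2.1.9]{elephant} and to the \mathlib argument by induction on the inductive definition of the generated topology, and your strengthened induction (proving $F$ is a sheaf for $g^*T$ for every $g$, handling the three constructors of \lean{saturate}) is exactly that argument. The two points you flag as the crux --- using the coverage axiom to refine pullbacks and upgrading from a smaller sieve to a larger one via separatedness --- are indeed where the work lies in the formalized proof.
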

\begin{proof}
  A proof can be found in~\cite[Proposition C.2.1.9]{elephant}. 
  The proof that appears in \mathlib uses induction based on the inductive definition of the Grothendieck topology generated by a coverage discussed above.
  If one uses Definition~\ref{def:Groth_top_coverage} instead, a proof can be obtained by using the equivalence of this definition with the inductive construction.
\end{proof}

\section{Effective epimorphisms}\label{sec:effective}
In the literature, there are three related conditions on a morphism, designed to capture the property of surjectivity better than the standard notion of an epimorphism.
These are called \emph{strict}, \emph{regular} and \emph{effective} epimorphisms respectively; each property implies the previous one. However, each property requires more assumptions on the underlying category than the previous one, and when the assumptions to define \emph{effective epimorphism} hold, then strict implies effective. So, in a sense, these conditions are all equivalent. This is why it was decided to use the name \emph{effective} in \mathlib for the most generally applicable notion, usually called \emph{strict}. For a more precise explanation of this justification of terminology, see the text following Definition~\ref{EffectiveEpiFamily}.

In the category of topological spaces and the category of compact Hausdorff spaces, the effective epimorphisms are precisely the quotient maps. In the latter, the quotient maps are simply the continuous surjections, so the properties of being surjective, an epimorphism and an effective epimorphism all coincide (see Propositions~\ref{prop:TocCatEffectiveEpi} and~\ref{prop:CompHausEffectiveEpi}).

\begin{definition}\label{RegularEpi}
  \href{https://github.com/leanprover-community/mathlib4/blob/743032e7ead097fb3e8ae5cd02d29cdd8899161c/Mathlib/CategoryTheory/Limits/Shapes/RegularMono.lean#L183-L192}{\faExternalLink}
  A morphism $f \colon X \to B$ in a category $\C$ is a \emph{regular epimorphism} if it exhibits $B$ as a coequalizer of some pair of morphisms $g_1, g_2 \colon Z \to X$. 
\end{definition}

\begin{remark}
  \href{https://github.com/leanprover-community/mathlib4/blob/743032e7ead097fb3e8ae5cd02d29cdd8899161c/Mathlib/CategoryTheory/Limits/Shapes/RegularMono.lean#L214-L221}{\faExternalLink}
  If a regular epimorphism $f \colon X \to B$ has a kernel pair (meaning that the pullback $X\times_B X$ exists), then $B$ is the coequalizer of the two projections $X \times_B X \to X$.
\end{remark}

\begin{definition}\label{EffectiveEpi}
    \href{https://github.com/leanprover-community/mathlib4/blob/743032e7ead097fb3e8ae5cd02d29cdd8899161c/Mathlib/CategoryTheory/EffectiveEpi/Basic.lean#L42-L69}{\faExternalLink}
    A morphism $f \colon Y \to X$ in a category $\C$ is an \emph{effective epimorphism} if it satisfies the following condition: for every morphism $e$ that coequalizes every pair of parallel morphisms which $f$ coequalizes, there exists a unique morphism $d$ such that $d \circ f = e$:
    \[\begin{tikzcd}
        Z & Y & X \\
        && W.
        \arrow["{g_1}", shift left, from=1-1, to=1-2]
        \arrow["{g_2}"', shift right, from=1-1, to=1-2]
        \arrow["f", from=1-2, to=1-3]
        \arrow["e"', from=1-2, to=2-3]
        \arrow["{\exists!\,d}", dashed, from=1-3, to=2-3]
    \end{tikzcd}\]
\end{definition}

\begin{remark}\label{rmk:effetive_epi_epi}
  It is easy to check that if $f\colon Y\to X$ is an effective epimorphism, then it is an epimorphism. Indeed, given a diagram
  \[\begin{tikzcd}
    Y & X & W
    \arrow["{h_1}", shift left, from=1-2, to=1-3]
    \arrow["{h_2}"', shift right, from=1-2, to=1-3]
    \arrow["f", from=1-1, to=1-2]
\end{tikzcd}\]
such that $h_1\circ f=h_2\circ f$, observe that $h_1\circ f$ equalizes every pair of morphisms $g_1,g_2\colon Z\to Y$ equalized by~$f$. In particular, there is a unique map $d\colon X\to W$ such that $d\circ f = h_1\circ f$, and since $h_1$ and $h_2$ both satisfy this property, we deduce $h_1=h_2$.
\end{remark}

In \mathlib, the notion of effective epimorphism is implemented in two steps. First, we define a structure \lean{EffectiveEpiStruct} that contains the data required to be an effective epimorphism:

\begin{lstlisting}
structure EffectiveEpiStruct {X Y : C} (f : Y ⟶ X) where
  desc : ∀ {W : C} (e : Y ⟶ W),
    (∀ {Z : C} (g₁ g₂ : Z ⟶ Y), g₁ ≫ f = g₂ ≫ f → g₁ ≫ e = g₂ ≫ e) → (X ⟶ W)
  fac : ∀ {W : C} (e : Y ⟶ W)
    (h : ∀ {Z : C} (g₁ g₂ : Z ⟶ Y), g₁ ≫ f = g₂ ≫ f → g₁ ≫ e = g₂ ≫ e),
    f ≫ desc e h = e
  uniq : ∀ {W : C} (e : Y ⟶ W)
    (h : ∀ {Z : C} (g₁ g₂ : Z ⟶ Y), g₁ ≫ f = g₂ ≫ f → g₁ ≫ e = g₂ ≫ e)
    (m : X ⟶ W), f ≫ m = e → m = desc e h
\end{lstlisting}

The field \lean{desc} provides, given a morphism $e \colon Y \to W$ which coequalizes every morphism that $f$ coequalizes, the morphism $d \colon X \to W$; the field \lean{fac} is a proof that $d \circ f = e$; and the field \lean{uniq} is a proof that $d$ is unique. 

We then define a class \lean{EffectiveEpi}, which is a proposition saying that the type of \lean{EffectiveEpiStruct}'s associated to $f$ is nonempty\footnote{The fact that \lean{EffectiveEpi} is a class allows \Lean to use \emph{typeclass inference} to infer that a morphism is effective epimorphic in some cases: for example, in $\CompHaus$, given a morphism \lean{f} with an \lean{[Epi f]} instance, \Lean can automatically infer an instance \lean{EffectiveEpi f}. Moreover, the internal axiomatic of \Lean guarantees that two terms of a proposition are definitionally equal: in particular, two \emph{proofs} of non-emptiness of \lean{EffectiveEpiStruct f} automatically coincide, whereas producing explicit witnesses might lead to different outcomes, and that would often be troublesome.}:

\begin{lstlisting}
class EffectiveEpi {X Y : C} (f : Y ⟶ X) : Prop where
  effectiveEpi : Nonempty (EffectiveEpiStruct f)
\end{lstlisting}

\begin{definition}
    Given a family of morphisms $f=(f_i \colon X_i \to B)_{i \in I}$ and a pair of morphisms $g_{j_1} \colon Z \to X_{j_1}$ and $g_{j_2} \colon Z \to X_{j_2}$, we say that the family \emph{coequalizes $g_{j_1}$ and $g_{j_2}$} if $f_{j_1} \circ g_{j_1} = f_{j_2} \circ g_{j_2}$.
\end{definition}

\begin{definition}\label{EffectiveEpiFamily}
    \href{https://github.com/leanprover-community/mathlib4/blob/743032e7ead097fb3e8ae5cd02d29cdd8899161c/Mathlib/CategoryTheory/EffectiveEpi/Basic.lean#L102-L133}{\faExternalLink}
    A family of morphisms $(f_i \colon X_i \to B)_{i \in I}$ in a category $\C$ is \emph{effective epimorphic} if it satisfies the following universal property:
    
    Given any family $(e_i \colon X_i \to W)_{i \in I}$ coequalizing every pair of morphisms $g_i \colon Z \to X_i$, $g_j \colon Z \to X_j$ which $f$ coequalizes, there exists a unique morphism $d$ such that for all $i$, $d \circ f_i = e_i$:
    \[\begin{tikzcd}
        Z & {X_i} \\
        {X_j} & B \\
        && W
        \arrow["{g_i}", from=1-1, to=1-2]
        \arrow["{f_i}", from=1-2, to=2-2]
        \arrow["{g_j}"', from=1-1, to=2-1]
        \arrow["{f_j}"', from=2-1, to=2-2]
        \arrow["{e_i}", curve={height=-12pt}, from=1-2, to=3-3]
        \arrow["{e_j}"', curve={height=12pt}, from=2-1, to=3-3]
        \arrow["{\exists! d}"', dashed, from=2-2, to=3-3]
    \end{tikzcd}\] 
\end{definition}

The notion of effective epimorphic family is formalized in a similar two-step process where we first define 

\begin{lstlisting}
structure EffectiveEpiFamilyStruct {B : C} {α : Type*}
    (X : α → C) (π : (a : α) → (X a ⟶ B)) where
  desc : ∀ {W} (e : (a : α) → (X a ⟶ W)),
          (∀ {Z : C} (a₁ a₂ : α) (g₁ : Z ⟶ X a₁) (g₂ : Z ⟶ X a₂),
            g₁ ≫ π _ = g₂ ≫ π _ → g₁ ≫ e _ = g₂ ≫ e _) → (B ⟶ W)
  fac : ∀ {W} (e : (a : α) → (X a ⟶ W))
          (h : ∀ {Z : C} (a₁ a₂ : α) (g₁ : Z ⟶ X a₁) (g₂ : Z ⟶ X a₂),
            g₁ ≫ π _ = g₂ ≫ π _ → g₁ ≫ e _ = g₂ ≫ e _)
              (a : α), π a ≫ desc e h = e a
  uniq : ∀ {W} (e : (a : α) → (X a ⟶ W))
          (h : ∀ {Z : C} (a₁ a₂ : α) (g₁ : Z ⟶ X a₁) (g₂ : Z ⟶ X a₂),
            g₁ ≫ π _ = g₂ ≫ π _ → g₁ ≫ e _ = g₂ ≫ e _)
              (m : B ⟶ W), (∀ (a : α), π a ≫ m = e a) → m = desc e h
\end{lstlisting}

and then

\begin{lstlisting}
class EffectiveEpiFamily {B : C} {α : Type*} (X : α → C)
    (π : (a : α) → (X a ⟶ B)) : Prop where
  effectiveEpiFamily : Nonempty (EffectiveEpiFamilyStruct X π)
\end{lstlisting}

Definitions~\ref{EffectiveEpi} and~\ref{EffectiveEpiFamily} work in \emph{any} category; the morphism in question is not required to have a kernel pair. It is easy to see that if $f$ is a regular epimorphism, then it is an effective epimorphism. Conversely, if an effective epimorphism $f$ has a kernel pair, then it is a regular epimorphism (see \href{https://github.com/leanprover-community/mathlib4/blob/743032e7ead097fb3e8ae5cd02d29cdd8899161c/Mathlib/CategoryTheory/EffectiveEpi/RegularEpi.lean#L39-L61}{\faExternalLink}). This justifies the use of the terminology ``effective epimorphism''; 

We give some characterizations of effective epimorphic families.
For an object $W$ of $\C$, let $h_W$ denote the representable presheaf $h_W(X) = \Hom_{\C}(X,W)$.
\begin{lemma}\label{lem:effective-epi-family-characterizations}
  \href{https://github.com/leanprover-community/mathlib4/blob/743032e7ead097fb3e8ae5cd02d29cdd8899161c/Mathlib/CategoryTheory/Sites/EffectiveEpimorphic.lean#L244-L255}{\faExternalLink}
  Let $(f_i\colon X_i\to B)$ be a family of morphisms in $\C$. 
  Let $S$ be the sieve generated by the set $(f_i)_{i\in I}$, regarded as a presieve.
  Then the following are equivalent:
  \begin{list_tfae}
    \item The family $(f_i)_i$ is effective epimorphic.\label{lem:eff_epi-fam-char:point_effepi}
    \item For every object $W$ of $\C$, the presheaf $h_W$ is a sheaf for $S$.\label{lem:eff_epi-fam-char:sheaf}
    \item The cocone in $\C$ corresponding to the sieve $S$ \textup{(}described in Remark~\ref{rem:cocone}\textup{)} is colimiting.\label{lem:eff_epi-fam-char:cocone}
  \end{list_tfae}
\end{lemma}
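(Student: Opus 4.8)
The plan is to use condition (ii) as a hinge, proving (i) $\iff$ (ii) and (ii) $\iff$ (iii) separately, since for the representable presheaf $h_W$ all three conditions unwind to the same universal property. The key observation to record first is a dictionary: a family of elements for a presieve with values in $h_W$ is literally a family of morphisms into $W$; the compatibility condition of Definition~\ref{def:compatible_family} becomes the coequalizing/cocone condition; and an amalgamation in $h_W(B) = \Hom(B,W)$ becomes a morphism factoring out of $B$. Once this dictionary is pinned down, each equivalence is read off, and the only real work is the bookkeeping needed to align the formulations.

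For (i) $\iff$ (ii): regard $(f_i)$ as a presieve on $B$. Since $h_W(\dom f_i) = \Hom(X_i, W)$, a family of elements for this presieve is exactly a family $(e_i \colon X_i \to W)_{i\in I}$. A commuting square as in Definition~\ref{def:compatible_family} with $f_i, f_j$ in the presieve and edges $g \colon Y \to X_i$, $g' \colon Y \to X_j$ is precisely a pair of morphisms which $f$ coequalizes, and the compatibility equation $h_W(g)(e_i) = h_W(g')(e_j)$ is precisely $e_i \circ g = e_j \circ g'$; so compatible families for $h_W$ are exactly the coequalizing families of Definition~\ref{EffectiveEpiFamily}. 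Likewise an amalgamation is exactly a morphism $d \colon B \to W$ with $d \circ f_i = e_i$ for all $i$. Hence ``$h_W$ is a sheaf for the presieve $(f_i)$ for all $W$'' is word-for-word the statement that $(f_i)$ is effective epimorphic. Invoking the Proposition relating sheaves for a presieve and for its generated sieve, $h_W$ is a sheaf for the presieve $(f_i)$ iff it is a sheaf for $S = \langle (f_i)\rangle$, which is (ii).

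For (ii) $\iff$ (iii): view $S$ as the full subcategory of $\C_{/B}$ of Remark~\ref{rem:cocone}, with forgetful functor $u \colon S \to \C$. A cocone over $u$ with apex $W$ is a family $\phi_g \colon \dom g \to W$ for $g \in S$ satisfying $\phi_{g'} \circ h = \phi_g$ for every morphism $h \colon g \to g'$ of $S$ (i.e. every $h$ with $g' \circ h = g$). Under $h_W(\dom g) = \Hom(\dom g, W)$ this is exactly a compatible family for $h_W$ on the sieve $S$: the only thing to check is that the square-compatibility of Definition~\ref{def:compatible_family} reduces to this triangle-compatibility, and here downward-closedness of $S$ is used --- given a commuting square with $g, g' \in S$ and $g \circ a = g' \circ a'$, the common composite again lies in $S$, and $a, a'$ are morphisms of $S$ into $g$ and $g'$, so two applications of triangle-compatibility yield the square equation. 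A morphism $d \colon B \to W$ with $d \circ g = \phi_g$ for all $g$ is precisely an amalgamation, so ``the cocone is colimiting'' (every apex-$W$ cocone factors uniquely through $B$, for all $W$) is literally condition (ii).

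The translations above are routine, so the main obstacle is bookkeeping rather than any single hard idea. Two spots require genuine care. The first is the passage between indexed families with repetitions (Definition~\ref{EffectiveEpiFamily}) and honest presieves: if $f_i = f_j$ then taking $g = g' = \mathrm{id}$ forces $e_i = e_j$ for any compatible family, so the two formulations of the universal property agree, but this must be verified to move cleanly between (i) and (ii). The second is the reduction of square- to triangle-compatibility in (ii) $\iff$ (iii), which genuinely relies on $S$ being a sieve. Keeping track of the contravariance of $h_W$ when matching the two compatibility conditions, and confirming that the natural comparison maps out of $\Hom(B,W)$ are the expected ones, then completes the argument.
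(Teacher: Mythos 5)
Your proposal is correct and follows essentially the same route as the paper: the dictionary identifying compatible families and amalgamations for $h_W$ with coequalizing families and factorizations, plus the cocone/compatible-family correspondence for the sieve $S$. The only difference is organizational --- you prove (i)~$\iff$~(ii) and (ii)~$\iff$~(iii) with (ii) as a hinge, while the paper closes the cycle via (iii)~$\implies$~(i) directly --- and your explicit treatment of the square-to-triangle reduction and the indexed-family-versus-presieve subtlety is a welcome addition.
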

\begin{proof}
  \ref{lem:eff_epi-fam-char:point_effepi}~$\iff$~\ref{lem:eff_epi-fam-char:sheaf}: First of all, observe that~\ref{lem:eff_epi-fam-char:sheaf} is equivalent to $h_W$ being a sheaf for $(f_i)_i$. Moreover, the data of a compatible family (in the sense of Definition~\ref{def:compatible_family}) for $(f_i)_i$ is a family $(x_i \colon X_i \to W)_i$ that coequalizes every pair of morphisms that $(f_i)_i$ coequalizes and an amalgamation for it is the morphism denoted $d$ in Definition~\ref{EffectiveEpiFamily}. The equivalence between~\ref{lem:eff_epi-fam-char:point_effepi} and~\ref{lem:eff_epi-fam-char:sheaf} follows.
  
  \ref{lem:eff_epi-fam-char:sheaf} $\implies$~\ref{lem:eff_epi-fam-char:cocone}: Suppose we have another cocone on the same functor, with cocone point $W$ and coprojections $x_f : X \to W$ for any $f : X \to B$ contained in $S$. 
We will now prove that this is precisely the data of a compatible family for $S$. Indeed, if $f \colon X \to B$ and $f' \colon X' \to B$ are in $S$, and the square 
  \[\begin{tikzcd}
    Y & {X'} \\
    X & B
    \arrow["{g'}", from=1-1, to=1-2]
    \arrow["g"', from=1-1, to=2-1]
    \arrow["f"', from=2-1, to=2-2]
    \arrow["{f'}", from=1-2, to=2-2]
  \end{tikzcd}\]
  commutes, then $f \circ g = f' \circ g' \in S$ because of the downwards closed property of sieves. We have coprojections $x_f \colon X \to W$, $x_{f'} \colon X' \to W$ and $x_{f \circ g} = x_{f' \circ g'} \colon Y \to W$ of the cocone with cocone point $W$, which satisfy 
  \[
    x_{f'} \circ g' = x_{f' \circ g'} = x_{f \circ g} = x_f \circ g
  \] 
  which is what we wanted. The unique amalgamation given by~\ref{lem:eff_epi-fam-char:sheaf} gives the unique cocone morphism required to satisfy the universal property of the colimit.
  
  \ref{lem:eff_epi-fam-char:cocone} $\implies$~\ref{lem:eff_epi-fam-char:point_effepi}: Given a family $(e_i\colon X_i\to W)$ that coequalizes any pair of morphisms $g_i\colon Z\to X_{i}$, $g_j\colon Z\to X_{j}$ that is coequalized by $f$, we obtain a cone over $S$ with cone point~$W$ as follows: recall that $S$ is generated by the~$(f_i)_i$, and thus the morphisms in~$S$ are precisely those which factor through~$f_i$ for some~$i$. 
  Thus, for each morphism $g\colon Y\to B$ in $S$, we may write $g = f_i\circ h$ for some $i$, and set $w_g:=e_i\circ h$ --- this is well-defined by the assumption on $(e_i)_i$
  We get the desired map $d\colon B\to W$ by the universal property of colimits.
\end{proof}

\begin{lemma}\label{lem:effectiveEpi_of_family}
  \href{https://github.com/leanprover-community/mathlib4/blob/743032e7ead097fb3e8ae5cd02d29cdd8899161c/Mathlib/CategoryTheory/EffectiveEpi/Coproduct.lean#L39-L52}{\faExternalLink}
  Let $(\pi_i \colon X_i \to B)_{i \in I}$ be an effective epimorphic family in $\C$, such that the coproduct of $(X_i)_i$ exists. The map 
  \[
    \pi \colon\coprod_i X_i \longrightarrow B  
  \]
  induced by $(\pi_i)_i$ is an effective epimorphism.
\end{lemma}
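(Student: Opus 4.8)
The plan is to verify the universal property of Definition~\ref{EffectiveEpi} directly, using the coproduct coprojections to transfer data back and forth between the single map $\pi$ and the family $(\pi_i)_i$. Write $\iota_i \colon X_i \to \coprod_i X_i$ for the coprojections, so that by construction of $\pi$ one has $\pi \circ \iota_i = \pi_i$ for every $i$. First I would unwind what must be produced: given a morphism $e \colon \coprod_i X_i \to W$ that coequalizes every pair of morphisms coequalized by $\pi$, I need to construct a unique $d \colon B \to W$ with $d \circ \pi = e$. The natural idea is to feed the family version of the universal property (Definition~\ref{EffectiveEpiFamily}) with the restricted morphisms $e_i := e \circ \iota_i \colon X_i \to W$.

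The key step --- and the only point requiring any care --- is checking that $(e_i)_i$ is admissible for the family universal property, i.e.\ that it coequalizes every pair $g_i \colon Z \to X_i$, $g_j \colon Z \to X_j$ with $\pi_i \circ g_i = \pi_j \circ g_j$. For such a pair, precomposing with the coprojections yields $\pi \circ (\iota_i \circ g_i) = \pi_i \circ g_i = \pi_j \circ g_j = \pi \circ (\iota_j \circ g_j)$, so $\iota_i \circ g_i$ and $\iota_j \circ g_j$ are coequalized by $\pi$. By the hypothesis on $e$ we then get $e \circ \iota_i \circ g_i = e \circ \iota_j \circ g_j$, that is $e_i \circ g_i = e_j \circ g_j$, as required. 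Hence Definition~\ref{EffectiveEpiFamily} supplies a unique $d \colon B \to W$ with $d \circ \pi_i = e_i$ for all $i$.

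It then remains to see that this $d$ does the job for $\pi$ and is the unique such morphism. For existence I would compute $d \circ \pi \circ \iota_i = d \circ \pi_i = e_i = e \circ \iota_i$ for every $i$, whence $d \circ \pi = e$ by the uniqueness part of the coproduct's universal property. For uniqueness, if $d'$ satisfies $d' \circ \pi = e$, then $d' \circ \pi_i = d' \circ \pi \circ \iota_i = e \circ \iota_i = e_i$ for all $i$, so $d' = d$ by the uniqueness clause already invoked in Definition~\ref{EffectiveEpiFamily}.

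I expect no genuine obstacle here: the whole argument is a diagram chase, and the only subtlety is the bookkeeping that converts a pair coequalized by the family into a pair coequalized by $\pi$ through the coprojections, which is precisely the compatibility verification in the second paragraph. I note in passing that one could instead route the proof through Lemma~\ref{lem:effective-epi-family-characterizations}, comparing the colimiting cocones attached to the sieves generated by $\pi$ and by $(\pi_i)_i$; however, since these two sieves differ in general (the map $\pi$ itself need not factor through any single $\pi_i$), the direct approach via the universal properties is cleaner and avoids having to identify the two cocones.
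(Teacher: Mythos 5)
Your proof is correct and follows the same route as the paper's: restrict $e$ along the coprojections to get the family $(e\circ\iota_i)_i$, verify it coequalizes every pair the family $(\pi_i)_i$ coequalizes, and transport the resulting $d$ back through the coproduct's universal property. The paper's version merely compresses the verification steps you spell out explicitly.
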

\begin{proof}
  Let $\iota_i \colon X_i \to \coprod_i X_i$ denote the coprojections of the coproduct.
  Let $e \colon \coprod_i X_i \to W$ be a morphism which coequalizes every pair of morphisms that $\pi$ coequalizes. It is clear that the family $(e \circ \iota_i)_{i \in I}$ coequalizes every pair $g_i \colon Z \to X_i$, $g_j \colon Z \to X_j$ that $(\pi_i)_{i \in I}$ coequalizes. It is easy to see that the morphism $d \colon B \to W$ obtained from the universal property of the effective epimorphic family gives the universal property of effective epimorphisms for $\pi$.
\end{proof}

\begin{lemma}\label{lem:effectiveEpiFamily_of_effectiveEpi}
  \href{https://github.com/leanprover-community/mathlib4/blob/743032e7ead097fb3e8ae5cd02d29cdd8899161c/Mathlib/CategoryTheory/EffectiveEpi/Coproduct.lean#L95-L117}{\faExternalLink}
  Let $(\pi_i \colon X_i \to B)_{i \in I}$ be a family of morphisms in $\C$. Suppose that
  \begin{list_conditions}
    \item All coproducts and pullbacks appearing in~\ref{pt:lemma_eff_of_eff:epi} exist.\label{pt:lemma_eff_of_eff:coproducts}
    \item For every object $Z$ and every morphism
  \[
    g \colon Z \longrightarrow \coprod_{i} X_i,  
  \]
  the induced map 
  \[
    i(g) := \coprod_i Z \times_{\coprod_i X_i} X_i \longrightarrow Z  
  \]
  is an epimorphism.\label{pt:lemma_eff_of_eff:epi}
\item  The map 
  \[
    \pi \colon \coprod_i X_i \longrightarrow B  
  \]
  induced by $(\pi_i)_i$ is an effective epimorphism.\label{pt:lemma_eff_of_eff:effective_epi}
  \end{list_conditions}
  Then $(\pi_i)_i$ is an effective epimorphic family.
\end{lemma}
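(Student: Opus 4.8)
The plan is to verify directly the universal property of an effective epimorphic family from Definition~\ref{EffectiveEpiFamily}, reducing it to the universal property of the single effective epimorphism $\pi$ provided by hypothesis~\ref{pt:lemma_eff_of_eff:effective_epi}. Write $P := \coprod_i X_i$, let $\iota_i \colon X_i \to P$ be the coprojections, so that $\pi \circ \iota_i = \pi_i$. Given a family $(e_i \colon X_i \to W)_i$ coequalizing every pair which $(\pi_i)_i$ coequalizes, let $e \colon P \to W$ be the induced map, characterized by $e \circ \iota_i = e_i$. The heart of the argument is to show that $e$ coequalizes every pair of morphisms that $\pi$ coequalizes; granting this, hypothesis~\ref{pt:lemma_eff_of_eff:effective_epi} yields a unique $d \colon B \to W$ with $d \circ \pi = e$, whence $d \circ \pi_i = d \circ \pi \circ \iota_i = e_i$. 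Uniqueness of such a $d$ follows because $\pi$ is in particular an epimorphism (Remark~\ref{rmk:effetive_epi_epi}) and the conditions $d \circ \pi_i = e_i$ determine $d \circ \pi$.

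So fix $a, b \colon Z \to P$ with $\pi \circ a = \pi \circ b$; I must prove $e \circ a = e \circ b$. First I would apply hypothesis~\ref{pt:lemma_eff_of_eff:epi} to $g = a$: the induced map $\coprod_i (Z \times_P X_i) \to Z$ is an epimorphism, so it suffices to check equality after precomposing with each projection $p_i \colon Z \times_P X_i \to Z$. Denoting by $q_i \colon Z \times_P X_i \to X_i$ the other projection, the pullback square gives $a \circ p_i = \iota_i \circ q_i$, hence $e \circ a \circ p_i = e_i \circ q_i$. It therefore remains to prove $e \circ b \circ p_i = e_i \circ q_i$ for every $i$.

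For this I would apply hypothesis~\ref{pt:lemma_eff_of_eff:epi} a second time, now to the morphism $b \circ p_i \colon Z \times_P X_i \to P$: the induced map $\coprod_j \bigl((Z \times_P X_i) \times_P X_j\bigr) \to Z \times_P X_i$ is an epimorphism, so it is enough to check that $e \circ b \circ p_i$ and $e_i \circ q_i$ agree after precomposing with each projection $r_{ij} \colon (Z \times_P X_i) \times_P X_j \to Z \times_P X_i$. Writing $s_{ij}$ for the projection to $X_j$, the pullback relation $b \circ p_i \circ r_{ij} = \iota_j \circ s_{ij}$ gives $e \circ b \circ p_i \circ r_{ij} = e_j \circ s_{ij}$, while the other side equals $e_i \circ q_i \circ r_{ij}$. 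Applying $\pi$ to the two pullback relations and using $\pi \circ a = \pi \circ b$ shows that $\pi_j \circ s_{ij} = \pi_i \circ (q_i \circ r_{ij})$, i.e.\ the pair $(s_{ij}, q_i \circ r_{ij})$ is coequalized by $(\pi_i)_i$; the coequalizing hypothesis on $(e_i)_i$ then delivers $e_j \circ s_{ij} = e_i \circ q_i \circ r_{ij}$, which is exactly what was needed.

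The main obstacle I anticipate is bookkeeping rather than conceptual: one has to apply the epimorphism hypothesis~\ref{pt:lemma_eff_of_eff:epi} twice — once along $a$ to split into summands indexed by $i$, and a second time along $b \circ p_i$ to land back in the $X_j$ — and then feed the resulting pair of maps $(s_{ij}, q_i \circ r_{ij})$ into the coequalizing hypothesis on $(e_i)_i$. Keeping track of which pullback is taken along which morphism, and verifying the required compatibility $\pi_j \circ s_{ij} = \pi_i \circ q_i \circ r_{ij}$ purely from $\pi \circ a = \pi \circ b$, is the delicate point; everything else is a routine assembly of the universal properties.
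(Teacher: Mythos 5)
Your proposal is correct and follows essentially the same route as the paper's proof: reduce to the effective epimorphism $\pi$ on the coproduct, and verify that $e$ coequalizes what $\pi$ coequalizes by applying the epimorphism hypothesis twice (first along $a$, then along $b\circ p_i$) and feeding the resulting pair of iterated-pullback projections into the coequalizing hypothesis on $(e_i)_i$. The bookkeeping you flag as the delicate point is exactly where the paper's proof also spends its effort.
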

\begin{proof}
  Let $(e_i \colon X_i \to Z)_{i \in I}$ be a family which coequalizes every pair of morphisms $g_i \colon Z \to X_i$, $g_j \colon Z \to X_j$ which $(\pi_i)_i$ coequalizes. We need to show that there exists a unique $d \colon B \to Z$ such that for all such $g_i, g_j$, we have $d \circ g_i = d \circ g_j$. To obtain this, we will apply the property that $\pi$ is an effective epimorphism to the induced morphism $e \colon \coprod_i X_i \to Z$. To be able to do this, we need to check that $e$ coequalizes every pair of morphisms which $\pi$ coequalizes. 

  Let $f_1, f_2 \colon Z \to \coprod_i X_i$ be given and suppose that $\pi \circ f_1 = \pi \circ f_2$. We want to show that $e \circ f_1 = e \circ f_2$. Applying the fact that $i(f_1)$ is an epimorphism, it suffices to prove that 
  \[
    e \circ f_1 \circ i(f_1) = e \circ f_2 \circ i(f_1).
  \] 
  This identity can be checked on each component of the coproduct $\coprod_i Z \times_{\coprod_i X_i} X_i$. In other words, we need to show that for every $a \in I$, 

  \[
    e \circ f_1 \circ i(f_1) \circ \iota_a = e \circ f_2 \circ i(f_1) \circ \iota_a,
  \]
  where 
  \[
    \iota_a \colon Z \times_{\coprod_i X_i} X_a \longrightarrow \coprod_i Z \times_{\coprod_i X_i} X_i
  \]
  denotes the coprojection. One easily checks that
  \[
    i(f_1) \circ \iota_a \colon Z \times_{\coprod_i X_i} X_a \longrightarrow Z
  \]
  is simply the first projection map in the pullback, which we denote by $p_1$. We thus need to show that 
  \[
    e \circ f_1 \circ p_1 = e \circ f_2 \circ p_1.
  \]
  The left-hand side simplifies to $e_a \circ p_2$, where
  \[
    p_2 \colon Z \times_{\coprod_i X_i} X_a \longrightarrow X_a
  \]
  denotes the second projection in the pullback. 

  Now it again suffices to prove the equality after precomposition with the epimorphism $i(f_2 \circ p_1)$, i.e. to show that 
  \[
    e_a \circ p_2 \circ i(f_2 \circ p_1) = e \circ f_2 \circ p_1 \circ i(f_2 \circ p_1).
  \]
  Again we can check this equality on the components of the coproduct $\coprod_{b}\left( Z \times_{\coprod_i X_i} X_a \right) \times_{\coprod_i X_i} X_b$, and similarly to above, this reduces to showing that for every $b \in I$, 
  \[
    e_a \circ g_a = e_b \circ g_b,
  \]
  where 
  \[
    g_a \colon \left( Z \times_{\coprod_i X_i} X_a \right) \times_{\coprod_i X_i} X_b \longrightarrow X_a
  \]
  is the first projection followed by the second projection, and
  \[
    g_b \colon  \left( Z \times_{\coprod_i X_i} X_a \right) \times_{\coprod_i X_i} X_b \longrightarrow X_b
  \]
  is the second projection. Doing the same manipulation on the equality $\pi \circ f_1 = \pi \circ f_2$, we see that $g_a, g_b$ is a pair of morphisms that the family $(\pi_i)_i$ coequalizes. By assumption, the family $(e_i)_i$ coequalizes it as well. This means that $e \circ f_1 = e \circ f_2$ and we obtain the unique $d \colon B \to Z$ we wanted.
\end{proof}

Propositions~\ref{prop:TocCatEffectiveEpi} and~\ref{prop:CompHausEffectiveEpi} provide an explicit description of effective epimorphisms in the categories of topological spaces, compact Hausdorff spaces, profinite spaces, and Stonean spaces. Both results ultimately rely on the observation that epimorphisms in these four categories are surjective, and we start with this result:
\begin{lemma}\label{lemma:epi_iff_surj}
  \href{https://github.com/leanprover-community/mathlib4/blob/743032e7ead097fb3e8ae5cd02d29cdd8899161c/Mathlib/Topology/Category/TopCat/EpiMono.lean#L28-L35}{\faExternalLink}
  \href{https://github.com/leanprover-community/mathlib4/blob/743032e7ead097fb3e8ae5cd02d29cdd8899161c/Mathlib/Topology/Category/CompHaus/Basic.lean#L335-L376}{\faExternalLink}
  \href{https://github.com/leanprover-community/mathlib4/blob/743032e7ead097fb3e8ae5cd02d29cdd8899161c/Mathlib/Topology/Category/Profinite/Basic.lean#L364-L399}{\faExternalLink}
  \href{https://github.com/leanprover-community/mathlib4/blob/743032e7ead097fb3e8ae5cd02d29cdd8899161c/Mathlib/Topology/Category/Stonean/Basic.lean#L183-L214}{\faExternalLink}
  Let $\C$ be any of the categories $\Top$, $\CompHaus$, $\Profinite$ or $\Stonean$. Then epimorphisms in $\C$ are surjective \textup{(}continuous\textup{)} maps.
\end{lemma}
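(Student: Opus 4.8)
The plan is to prove the contrapositive uniformly in each of the four categories: assuming a morphism $f \colon X \to Y$ in $\C$ is \emph{not} surjective, I will exhibit two distinct parallel morphisms $g_1, g_2 \colon Y \to Z$ in $\C$ with $g_1 \circ f = g_2 \circ f$, thereby witnessing that $f$ is not an epimorphism. In every case the construction starts from a point $y_0 \in Y \setminus \image f$, and $g_1, g_2$ are chosen so as to agree on $\image f$ but differ at $y_0$. Then $g_1$ and $g_2$ restrict to the same map on $\image f$, so the equality $g_1 \circ f = g_2 \circ f$ is automatic, while $g_1 \neq g_2$. The only thing that varies between the four categories is the target space $Z$ and the mechanism for producing the separating map, which is where the topological input enters.

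For $\Top$ the argument is the easiest: I would take $Z$ to be a two-point set $\{0,1\}$ equipped with the \emph{indiscrete} topology, let $g_1$ be the constant map at $0$, and let $g_2$ be the indicator function of $\{y_0\}$. Since every map into an indiscrete space is continuous, both $g_1$ and $g_2$ are morphisms of $\Top$, and they agree away from $y_0$, hence in particular on $\image f$.

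For the three categories of compact Hausdorff type, the first common step is to observe that $\image f$ is \emph{closed}: it is the continuous image of a compact space, hence compact, and a compact subset of a Hausdorff space is closed. Thus $Y \setminus \image f$ is an open neighbourhood of $y_0$, which provides the room needed to separate $y_0$ from $\image f$. In $\CompHaus$ I would invoke Urysohn's lemma (a compact Hausdorff space is normal) to produce a continuous function $g \colon Y \to [0,1]$ with $g|_{\image f} = 0$ and $g(y_0) = 1$; since $[0,1]$ is compact Hausdorff, comparing $g$ with the constant map at $0$ furnishes the required pair. In $\Profinite$ and $\Stonean$ the spaces are totally disconnected, so zero-dimensionality of a compact Hausdorff space provides a \emph{clopen} set $U$ with $y_0 \in U$ and $U \cap \image f = \emptyset$; then the characteristic function $\chi_U \colon Y \to \{0,1\}$, with $\{0,1\}$ carrying the \emph{discrete} topology, is continuous, and comparing it with the constant map at $0$ does the job. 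Here one checks that the two-point discrete space is an object of each of these categories — being finite it is compact Hausdorff and totally disconnected, and being discrete it is extremally disconnected — so the separating morphisms genuinely live in $\C$.

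The main obstacle is precisely the construction of the separating continuous map in each compact Hausdorff setting: for $\CompHaus$ this is exactly the content of Urysohn's lemma, while for $\Profinite$ and $\Stonean$ it is the zero-dimensionality of a totally disconnected compact Hausdorff space that yields the clopen neighbourhood. The closedness of $\image f$ is the other essential ingredient, as it is what makes $y_0$ separable from the image at all. Once the separating maps are in hand, the verification that $g_1 \neq g_2$ and $g_1 \circ f = g_2 \circ f$ is routine in every case.
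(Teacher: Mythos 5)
Your proposal is correct and follows essentially the same route as the paper: closedness of the image, Urysohn's lemma for $\CompHaus$, and a clopen separating set with a characteristic function into the discrete two-point space for $\Profinite$ and $\Stonean$. The only (immaterial) difference is in the $\Top$ case, where you compare the indicator of $\{y_0\}$ with the constant map into the indiscrete two-point space, while the paper compares the characteristic function of $\image f$ with the constant map at $1$.
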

\begin{proof}
  Note first that one direction is clear, because a surjective morphism in any concrete category is an epimorphism. Now let $f\colon Y \to X$ be a morphism in $\C$.

  When $\C=\Top$ the result is very well known: suppose $f$ is an epimorphism and consider the diagram
  \[
    \begin{tikzcd}
      Y & X & \{0,1\}^\flat
      \arrow["{f}", from=1-1, to=1-2]
      \arrow["\chi", shift left, from=1-2, to=1-3]
      \arrow["{e_1}" ', shift right, from=1-2, to=1-3]
  \end{tikzcd}
  \]
  where $\{0,1\}^\flat$ denotes the set $\{0,1\}$ endowed with the indiscrete topology, where $\chi$ is the characteristic function of $\image(f)$ and where $e_1$ is the constant map with image $1$. Clearly, $\chi \circ f=e_1\circ f$ and when $f$ is an epimorphism this implies that $\chi=e_1$, which is the statement $\image(f)=X$.

  When $\C=\CompHaus$, the above proof breaks down because $\{0,1\}^\flat$ is not in $\C$. But since spaces in $\C$ are normal, we can argue as follows: the subspace $\image(f)\subseteq X$ is compact, hence closed. Suppose that $f$ is not surjective, and let $x\notin\image(f)$: by Urysohn's lemma, there is a continuous function
  $\theta\colon X\to [0,1]$ such that $\theta(x)=0$ and $\theta(\image(f))=1$. Denote by $e_1\colon X\to [0,1]$ the constant function with image $1$: then $e_1\neq \theta$ and yet $f\circ \theta=f\circ e_1$ showing that $f$ is not an epimorphism.

  When $\C=\Profinite$ or $\C=\Stonean$ the above proof breaks down, because the unit interval is not in $\C$. But the argument for $\Top$ can be adapted by replacing the indiscrete space $\{0,1\}^\flat$ with the \emph{discrete} space $\{0,1\}^\delta$, which is in $\C$. First, observe that, given any topological space $Z$ and a clopen $U\subseteq Z$, the characteristic function $\chi_U$ is continuous.  Moreover, since every object in $\C$ is totally disconnected, its topology admits a basis of open neighbourhoods that are clopen sets \href{https://github.com/leanprover-community/mathlib4/blob/743032e7ead097fb3e8ae5cd02d29cdd8899161c/Mathlib/Topology/Separation.lean#L2570-L2577}{\faExternalLink}. Now suppose $f$ is not surjective, and let $x \notin \image(f)$. Since --- as before --- $\image(f)$ is closed, there exists an open neighbourhood~$V$ of~$x$ contained in the complement~$\image(f)^c$ and we can find a clopen neighbourhood~$U\subseteq V$ such that $x\in U$ and $U\cap \image(f)=\emptyset$. Consider the diagram in $\C$
\[
    \begin{tikzcd}
      Y & X & \{0,1\}^\delta
      \arrow["{f}", from=1-1, to=1-2]
      \arrow["\chi_U", shift left, from=1-2, to=1-3]
      \arrow["{e_0}" ', shift right, from=1-2, to=1-3]
  \end{tikzcd}
  \]
where $e_0$ is the constant function with value $0$. Now $\chi_U\neq e_0$, as can be seen by evaluating them on $x$, yet $\chi_U\circ f=e_0\circ f$ since $U\cap \image(f)=\emptyset$. This shows that $f$ is not an epimorphism.
\end{proof}
\begin{lemma}\label{quotient_map_is_effective_epi}
  Let $\C$ be a full subcategory of $\Top$ and let $f\colon Y\to X$ be a morphism in $\C$ which is a quotient map. Then $f$ is an effective epimorphism in $\C$. 
\end{lemma}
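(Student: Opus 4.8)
The plan is to construct directly the data of an \lean{EffectiveEpiStruct} for $f$. Fix an object $W$ of $\C$ and a morphism $e \colon Y \to W$ which coequalizes every pair of morphisms in $\C$ that $f$ coequalizes; I must produce a unique $d \colon X \to W$ with $d \circ f = e$. Since $f$ is a quotient map it is in particular a continuous surjection, so the uniqueness of $d$ is immediate: for each $x \in X$ there is some $y \in Y$ with $f(y)=x$, whence any $d$ satisfying $d \circ f = e$ is forced to have $d(x) = e(y)$. The content of the lemma is therefore the \emph{existence} of $d$, which I will build first as a set-theoretic function, then upgrade to a continuous map, and finally recognize as a morphism of $\C$.

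To define $d$ as a function I want to set $d(x) := e(y)$ for any $y \in Y$ with $f(y) = x$; surjectivity guarantees such a $y$ exists, and the only thing to verify is that this is independent of the chosen $y$, i.e.\ that $e$ is constant on the fibers of $f$. This fiber-constancy is the crux of the argument, and the one place where care is needed: I cannot detect points by mapping out of the one-point space, since $\C$ need not contain it, nor may I form the kernel pair $Y \times_X Y$ inside $\C$, since $\C$ is not assumed to admit pullbacks. The key trick is to test with \emph{constant maps out of $Y$ itself}. Given $y_1, y_2 \in Y$ with $f(y_1) = f(y_2)$, consider the two constant maps $c_{y_1}, c_{y_2} \colon Y \to Y$; these are continuous maps between objects of $\C$, hence morphisms of $\C$ since $\C$ is full. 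As $f \circ c_{y_1}$ and $f \circ c_{y_2}$ are the constant maps with values $f(y_1)$ and $f(y_2)$, which agree, the morphism $f$ coequalizes this pair. The hypothesis on $e$ then yields $e \circ c_{y_1} = e \circ c_{y_2}$, and evaluating both sides gives $e(y_1) = e(y_2)$. Thus $e$ is constant on fibers, and $d$ is a well-defined function with $d \circ f = e$.

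It remains to check that $d$ is continuous and lies in $\C$. Continuity is precisely the universal property of the quotient topology: $f$ is a quotient map and $d \circ f = e$ is continuous, so $d$ is continuous. Finally, $d$ is a continuous map between the objects $X$ and $W$ of $\C$, and since $\C$ is a full subcategory of $\Top$, it is a morphism of $\C$. Packaging the existence of $d$ (the \lean{desc} and \lean{fac} fields) together with the uniqueness already observed (the \lean{uniq} field) produces the required \lean{EffectiveEpiStruct}, so $f$ is an effective epimorphism in $\C$.

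The main obstacle is exactly the fiber-constancy step, and the whole proof hinges on the observation that the constant maps $Y \rightrightarrows Y$ let us detect the fibers of $f$ while staying inside $\C$, thereby avoiding any assumption that $\C$ contains a point object or admits the relevant pullbacks. Note that fullness of $\C$ is used twice and essentially: once to ensure the constant maps are morphisms of $\C$, and once to ensure the resulting $d$ is a morphism of $\C$.
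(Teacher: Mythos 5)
Your proof is correct and follows essentially the same route as the paper's: both establish fiber-constancy of $e$ by testing against the constant self-maps $Y \to Y$ with values $y_1$ and $y_2$, and then invoke the universal property of the quotient topology to produce the continuous $d$. You merely spell out in more detail the roles of surjectivity (for uniqueness) and fullness (to see that the constant maps and $d$ are morphisms of $\C$), which the paper leaves implicit.
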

\begin{proof}
  Suppose that $e\colon Y\to Z$ equalizes every morphism that $f$ equalizes. This means that for every pair of points $y_1,y_2\in Y$ such that $f(y_1)=f(y_2)$, we have $e(y_1)=e(y_2)$, as can be seen by considering the parallel morphisms $e_{y_1},e_{y_2}\colon Y\to Y$ sending everything to $y_1$ and to $y_2$, respectively. The universal property of the quotient topology on $X$ provides the existence of a unique continuous $d\colon X\to Z$ such that $d\circ f = e$, showing that $f$ is an effective epimorphism.
\end{proof}
\begin{proposition}\label{prop:TocCatEffectiveEpi}
  \href{https://github.com/leanprover-community/mathlib4/blob/743032e7ead097fb3e8ae5cd02d29cdd8899161c/Mathlib/Topology/Category/TopCat/EffectiveEpi.lean#L52-L75}{\faExternalLink}
  The effective epimorphisms in $\Top$ are the quotient maps.
\end{proposition}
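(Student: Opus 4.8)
The plan is to prove the two inclusions separately. One direction is immediate from the work already done: applying Lemma~\ref{quotient_map_is_effective_epi} with $\C = \Top$ shows that every quotient map in $\Top$ is an effective epimorphism. The content of the proposition therefore lies in the converse, namely that an effective epimorphism $f \colon Y \to X$ in $\Top$ is necessarily a quotient map.

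So suppose $f$ is an effective epimorphism. By Remark~\ref{rmk:effetive_epi_epi} it is an epimorphism, hence by Lemma~\ref{lemma:epi_iff_surj} it is surjective; it remains to check that $X$ carries the quotient (final) topology induced by $f$. Write $X'$ for the set underlying $X$ equipped with this quotient topology $\tau'$, i.e.\ the finest topology making $f$ continuous. Since $f\colon Y \to X$ is continuous, every open set of $X$ is open in $X'$, so the identity function is a continuous map $\iota\colon X' \to X$; we must show that $\iota$ is a homeomorphism, equivalently that the two topologies $\tau_X$ and $\tau'$ coincide.

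The key idea is to probe the universal property of the effective epimorphism with the tautological map $e \colon Y \to X'$ having the same underlying function as $f$, which is continuous precisely because $\tau'$ is the quotient topology. Since $e$ and $f$ agree as set maps, $e$ coequalizes exactly the same pairs of morphisms into $Y$ that $f$ coequalizes; hence the defining property of an effective epimorphism (Definition~\ref{EffectiveEpi}) produces a unique continuous map $d \colon X \to X'$ with $d \circ f = e$. Comparing underlying functions, $d \circ f = f$ as set maps, and since $f$ is surjective this forces $d$ to be the identity on the underlying set of $X$. Thus the identity is a continuous map $X \to X'$, so $\tau' \subseteq \tau_X$; combined with the reverse inclusion noted above, $\tau_X = \tau'$ and $f$ is a quotient map.

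There is essentially no hard step here: once Lemma~\ref{quotient_map_is_effective_epi}, Remark~\ref{rmk:effetive_epi_epi}, and Lemma~\ref{lemma:epi_iff_surj} are in hand, the only nontrivial move is the conceptual one of feeding the quotient-topology refinement $X'$ of the target back into the universal property. The one point to keep straight is that $e$ and $f$ have literally the same underlying function, so the hypothesis ``$e$ coequalizes whatever $f$ coequalizes'' holds trivially at the level of sets, and the resulting descent map $d$ is then forced to be the set-theoretic identity by surjectivity of $f$.
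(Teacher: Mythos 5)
Your proof is correct and follows essentially the same route as the paper's: both directions are handled identically, with the converse obtained by retopologizing $X$ with the final topology and feeding the tautological map $Y \to \widehat{X}$ into the universal property of the effective epimorphism to force the two topologies to agree. The only cosmetic difference is that you spell out explicitly why surjectivity forces the descent map $d$ to be the set-theoretic identity, which the paper leaves implicit.
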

\begin{proof}
  A quotient map is an effective epimorphism in $\Top$ by Lemma~\ref{quotient_map_is_effective_epi}.

  In the other direction, let $f\colon Y\to X$ be an effective epimorphism in $\Top$. By Remark~\ref{rmk:effetive_epi_epi} and Lemma~\ref{lemma:epi_iff_surj}, $f$ is surjective and we are simply left to prove that in this situation $X$ is endowed with the quotient topology, namely the final topology induced by $f$. Denote by $\widehat{X}$ the space whose underlying set coincides with $X$, but endowed with the final topology induced by $f$, so that the identity map $i\colon \widehat{X}\to X$ is continuous. In the diagram
  \[\begin{tikzcd}
    Y&X\\
    &\widehat{X}
    \arrow["f", from=1-1, to=1-2]
    \arrow["i", from=2-2, to=1-2]
    \arrow["\widehat{f}=f" ', from=1-1, to=2-2]
    \arrow["d" yshift=-1ex, dashed, bend left = 35, from=1-2, to=2-2]
  \end{tikzcd} \]
  the morphism $\widehat{f}$ equalizes every pair of morphisms equalized by $f$, so there exists a unique \emph{continuous} map $d\colon X\to\widehat{X}$ making the diagram commute. It follows that $d$ is induced by the identity, showing that $X$ is homeomorphic to $\widehat{X}$, as required.
\end{proof}

\begin{proposition}\label{prop:CompHausEffectiveEpi}
  \href{https://github.com/leanprover-community/mathlib4/blob/743032e7ead097fb3e8ae5cd02d29cdd8899161c/Mathlib/Topology/Category/CompHaus/EffectiveEpi.lean#L72-L85}{\faExternalLink}
  \href{https://github.com/leanprover-community/mathlib4/blob/743032e7ead097fb3e8ae5cd02d29cdd8899161c/Mathlib/Topology/Category/Profinite/EffectiveEpi.lean#L69-L82}{\faExternalLink}
  \href{https://github.com/leanprover-community/mathlib4/blob/743032e7ead097fb3e8ae5cd02d29cdd8899161c/Mathlib/Topology/Category/Stonean/EffectiveEpi.lean#L62-L75}{\faExternalLink}
  The effective epimorphisms in $\CompHaus,\Profinite$ and in $\Stonean$ are the \textup{(}continuous\textup{)} surjections.
\end{proposition}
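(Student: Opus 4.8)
The plan is to prove both inclusions. For the easy direction, I would observe that every effective epimorphism is an epimorphism by Remark~\ref{rmk:effetive_epi_epi}, and that epimorphisms in each of the three categories are surjective by Lemma~\ref{lemma:epi_iff_surj}; hence every effective epimorphism is a continuous surjection. This disposes of one containment uniformly for $\CompHaus$, $\Profinite$ and $\Stonean$.

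For the converse, the key observation is that a continuous surjection $f\colon Y\to X$ in any of these categories is automatically a quotient map. Indeed, all three categories consist of compact Hausdorff spaces, so $Y$ is compact and $X$ is Hausdorff. I would first show that $f$ is a closed map: if $C\subseteq Y$ is closed then $C$ is compact, its continuous image $f(C)$ is compact, and a compact subset of the Hausdorff space $X$ is closed. A continuous closed surjection is a quotient map, so $X$ carries the final topology induced by $f$.

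Having established that $f$ is a quotient map, I would invoke Lemma~\ref{quotient_map_is_effective_epi}. Each of $\CompHaus$, $\Profinite$ and $\Stonean$ is a full subcategory of $\Top$, so the lemma applies verbatim and yields that $f$ is an effective epimorphism in the respective category. Combined with the first paragraph, this gives the desired characterization in all three cases.

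I expect the proof to be essentially routine, as the entire content is concentrated in the closed-map observation, which is identical for $\CompHaus$, $\Profinite$ and $\Stonean$. The only point deserving care is the appeal to Lemma~\ref{quotient_map_is_effective_epi}: one must use that the relevant categories are full subcategories of $\Top$, so that in the universal property defining an effective epimorphism the test object may be taken to be an arbitrary object of the category and the resulting map out of $X$ is again a morphism there. For $\Stonean$ in particular it is reassuring that Definition~\ref{EffectiveEpi} makes no reference to pullbacks or coequalizers existing in the category, so the absence of general pullbacks in $\Stonean$ causes no difficulty whatsoever.
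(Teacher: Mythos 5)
Your proposal is correct and follows exactly the paper's argument: the forward direction combines Remark~\ref{rmk:effetive_epi_epi} with Lemma~\ref{lemma:epi_iff_surj}, and the converse observes that a continuous surjection between compact Hausdorff spaces is closed, hence a quotient map, and then invokes Lemma~\ref{quotient_map_is_effective_epi}. The extra care you take about the full-subcategory hypothesis and the absence of pullbacks in $\Stonean$ is sound but not needed beyond what the paper already does.
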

\begin{proof}
  Let $\C$ be any of the categories $\CompHaus,\Profinite$ or $\Stonean$ and let $f\colon Y\to X$ be an effective epimorphism in $\C$. Combining Remark~\ref{rmk:effetive_epi_epi} and Lemma~\ref{lemma:epi_iff_surj}, yields that $f$ is a continuous surjection.

  In the other direction, consider a continuous surjection $f\colon Y\to X$ in $\C$. Since the objects of $\C$ are compact Hausdorff spaces, $f$ is also a closed map and hence a quotient map, and thus an effective epimorphism by Lemma~\ref{quotient_map_is_effective_epi}.
\end{proof}

\section{Three Grothendieck topologies}\label{sec:topologies}

\subsection{The regular topology}
\begin{definition}\label{def:preregular_cat}
  \href{https://github.com/leanprover-community/mathlib4/blob/743032e7ead097fb3e8ae5cd02d29cdd8899161c/Mathlib/CategoryTheory/Sites/Coherent/Basic.lean#L84-L104}{\faExternalLink}
  A category $\C$ is \emph{preregular} if the collection of presieves consisting of single effective epimorphisms forms a coverage. 
  In other words, if for every effective epimorphism $g \colon Z \to Y$ and every morphism $f \colon X \to Y$, there exists an effective epimorphism $h \colon W \to X$ and a morphism $i \colon W \to Z$ such that the following diagram commutes:
  \[\begin{tikzcd}
    W & Z \\
    X & Y
    \arrow["i", from=1-1, to=1-2]
    \arrow["g", two heads, from=1-2, to=2-2]
    \arrow["h"', two heads, from=1-1, to=2-1]
    \arrow["f"', from=2-1, to=2-2]
  \end{tikzcd}\]
  In this case, we call this coverage the \emph{regular coverage} on $\C$, and the Grothendieck topology generated by this coverage is called the \emph{regular topology} on $\C$.

\end{definition}

In \mathlib, we define a predicate \lean{Preregular} \href{https://github.com/leanprover-community/mathlib4/blob/743032e7ead097fb3e8ae5cd02d29cdd8899161c/Mathlib/CategoryTheory/Sites/Coherent/Basic.lean#L84-L104}{\faExternalLink} on categories:

\begin{lstlisting}
class Preregular : Prop where
  exists_fac : ∀ {X Y Z : C} (f : X ⟶ Y) (g : Z ⟶ Y) [EffectiveEpi g],
    (∃ (W : C) (h : W ⟶ X) (_ : EffectiveEpi h) (i : W ⟶ Z), i ≫ g = h ≫ f)
\end{lstlisting}

Then the definition of the regular topology follows \href{https://github.com/leanprover-community/mathlib4/blob/743032e7ead097fb3e8ae5cd02d29cdd8899161c/Mathlib/CategoryTheory/Sites/Coherent/Basic.lean#L106-L129}{\faExternalLink}:

\begin{lstlisting}
def regularCoverage [Preregular C] : Coverage C where
  covering B := { S | ∃ (X : C) (f : X ⟶ B), S = Presieve.ofArrows (fun (_ : Unit) ↦ X)
    (fun (_ : Unit) ↦ f) ∧ EffectiveEpi f }
  pullback := by ...

def regularTopology [Preregular C] : GrothendieckTopology C :=
  Coverage.toGrothendieck _ <| regularCoverage C
\end{lstlisting}

\subsection{The extensive topology}
\begin{definition}\label{def:finitary extensive}
    \href{https://github.com/leanprover-community/mathlib4/blob/743032e7ead097fb3e8ae5cd02d29cdd8899161c/Mathlib/CategoryTheory/Extensive.lean#L90-L96}{\faExternalLink}
    A category $\C$ is \emph{finitary extensive} if it satisfies the following properties:
    \begin{list_conditions}
        \item $\C$ has finite coproducts.\label{pt:def_finitary_extensive:coproducts}
        \item $\C$ has pullbacks along coprojections of finite coproducts.\label{pt:def_finitary_extensive:pullbacks}
        \item Every commutative diagram 
        \[\begin{tikzcd}
            {Z_1} & Z & {Z_2} \\
            X & {X\coprod Y} & Y
            \arrow[from=1-1, to=2-1]
            \arrow[from=1-1, to=1-2]
            \arrow[from=2-1, to=2-2]
            \arrow[from=1-2, to=2-2]
            \arrow[from=2-3, to=2-2]
            \arrow[from=1-3, to=1-2]
            \arrow[from=1-3, to=2-3]
        \end{tikzcd}\]
        consists of two pullback squares if and only if the top row is a coproduct diagram.\label{pt:def_finitary_extensive:diagram}
    \end{list_conditions}    

\end{definition}

\begin{remark}
    Our definition of finitary extensive category is precisely~\cite[Definition~2.1 and Proposition~2.2]{carboni}.
\end{remark}

\mathlib already had the predicate \lean{FinitaryExtensive} on categories:

\begin{lstlisting}
class FinitaryExtensive (C : Type u) [Category.{v} C] : Prop where
  [hasFiniteCoproducts : HasFiniteCoproducts C]
  [hasPullbacksOfInclusions : HasPullbacksOfInclusions C]
  van_kampen' : ∀ {X Y : C} (c : BinaryCofan X Y), IsColimit c → IsVanKampenColimit c
\end{lstlisting}

The field \lean{van_kampen'} is condition~\ref{pt:def_finitary_extensive:diagram} in Definition~\ref{def:finitary extensive}. 

\begin{proposition}\label{prop:extensiveCoverage_is_coverage}
    Let $\C$ be a finitary extensive category. The collection of finite families $(X_i \to X)_{i \in I}$ exhibiting~$X$ as a coproduct of the family $(X_i)_{i\in I}$, forms a coverage.
\end{proposition}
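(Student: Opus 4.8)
The plan is to verify the single stability axiom in the definition of a coverage for the proposed collection of covering presieves. Describe the covering presieves on an object $Y$ as the finite families $(\iota_i \colon Y_i \to Y)_{i \in I}$ exhibiting $Y$ as a coproduct $\coprod_{i\in I} Y_i$; concretely, these are the coprojection families of finite coproduct cocones. Fix such a covering presieve $S = (\iota_i)_{i\in I}$ on $Y$ together with an arbitrary morphism $f \colon X \to Y$. I must produce a covering presieve $T$ on $X$ each of whose members, post-composed with $f$, factors through some $\iota_i \in S$.

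The natural candidate for $T$ is obtained by pulling the coproduct decomposition of $Y$ back along $f$. For each $i \in I$ the cospan $X \xrightarrow{f} Y \xleftarrow{\iota_i} Y_i$ has a coprojection as one of its legs, so its pullback $X_i := X \times_Y Y_i$ exists by condition~\ref{pt:def_finitary_extensive:pullbacks} of Definition~\ref{def:finitary extensive}; denote by $p_i \colon X_i \to X$ and $q_i \colon X_i \to Y_i$ the two projections. The claim is that the family $(p_i \colon X_i \to X)_{i \in I}$ exhibits $X$ as the coproduct $\coprod_{i} X_i$, so that $T := (p_i)_i$ is again a covering presieve. Granting this, the proposition follows immediately: by commutativity of the pullback square we have $f \circ p_i = \iota_i \circ q_i$ for every $i$, so each generator $p_i$ of $T$ satisfies that $f \circ p_i$ factors through $\iota_i \in S$, which is exactly the required condition.

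It remains to justify the claim that finite coproducts are stable under pullback (i.e.\ \emph{universal}), which is the heart of the argument and the step I expect to be the main obstacle, since the van Kampen axiom~\ref{pt:def_finitary_extensive:diagram} is only stated for \emph{binary} cofans. The plan is to argue by induction on the cardinality of $I$. In the binary case $I = \{1,2\}$, one forms the two pullbacks $X_1 = X \times_Y Y_1$ and $X_2 = X \times_Y Y_2$ along $f \colon X \to Y = Y_1 \coprod Y_2$; these are precisely the two squares appearing in condition~\ref{pt:def_finitary_extensive:diagram}, and the ``two pullback squares imply that the top row is a coproduct'' direction of the van Kampen condition yields $X \cong X_1 \coprod X_2$ with the $p_i$ as coprojections. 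The inductive step writes $\coprod_{i\in I} Y_i$ as a binary coproduct of $Y_{i_0}$ and $\coprod_{i \neq i_0} Y_i$ and combines the binary case with the induction hypothesis (using condition~\ref{pt:def_finitary_extensive:coproducts} to form the intermediate coproducts). The nullary base case, where $Y$ is initial, reduces to the strictness of the initial object in a finitary extensive category, namely that any morphism into the initial object forces its source to be initial, so that the empty family covers $X$. Assembling the binary instances across the finite index set then gives the full universality statement and completes the proof.
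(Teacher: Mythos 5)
Your proof is correct and follows essentially the same route as the paper, which simply observes that the axioms of a finitary extensive category make finite coproducts stable under pullback along any morphism, so that the pulled-back family $(X \times_Y Y_i \to X)_{i\in I}$ serves as the required covering presieve. You additionally spell out the details the paper defers to the formalization --- the induction reducing finite universality to the binary van Kampen condition, with strictness of the initial object handling the empty case --- and these details are accurate.
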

\begin{proof}
    The axioms of a finitary extensive category ensure that the required property holds, namely that given a morphism $f \colon X \to Y$ and a finite family of morphisms $(g_i \colon Y_i \to Y)_{i \in I}$, the family $(X \times_Y Y_i \to X)_{i \in I}$ exhibits $X$ as a coproduct of the family $(X \times_Y Y_i)_{i \in I}$. This has been formalized in \mathlib~\href{https://github.com/leanprover-community/mathlib4/blob/743032e7ead097fb3e8ae5cd02d29cdd8899161c/Mathlib/CategoryTheory/Sites/Coherent/Basic.lean#L131-L150}{\faExternalLink}, but it appears \emph{ibid}. as a definition: this is because the proof that the collection is a coverage is part of the definition in question.
\end{proof}

\begin{definition}
  \href{https://github.com/leanprover-community/mathlib4/blob/743032e7ead097fb3e8ae5cd02d29cdd8899161c/Mathlib/CategoryTheory/Sites/Coherent/Basic.lean#L152-L156}{\faExternalLink}  
  Let $\C$ be a finitary extensive category. The coverage defined in Proposition~\ref{prop:extensiveCoverage_is_coverage} is called the \emph{extensive coverage} on $\C$, and the Grothendieck topology generated by this coverage is called the \emph{extensive topology} on $\C$. 

\end{definition}

In \mathlib, we define the extensive topology as follows \href{https://github.com/leanprover-community/mathlib4/blob/743032e7ead097fb3e8ae5cd02d29cdd8899161c/Mathlib/CategoryTheory/Sites/Coherent/Basic.lean#L131-L156}{\faExternalLink}:

\begin{lstlisting}
def extensiveCoverage [FinitaryPreExtensive C] : Coverage C where
  covering B := { S | ∃ (α : Type) (_ : Finite α) (X : α → C) (π : (a : α) → (X a ⟶ B)), S = Presieve.ofArrows X π ∧ IsIso (Sigma.desc π) }
  pullback := by ...

def extensiveTopology [FinitaryPreExtensive C] : GrothendieckTopology C :=
  Coverage.toGrothendieck _ <| extensiveCoverage C
\end{lstlisting}

Note that the definition of the extensive coverage and extensive topology only requires an assumption \lean{[FinitaryPreExtensive C]}. This condition is slightly weaker than \lean{FinitaryExtensive}, but the difference is unimportant. For the characterization of sheaves for the extensive topology, the stronger condition is indeed required.

\subsection{The coherent topology}
\begin{definition}\label{def:coherent coverage}
  \href{https://github.com/leanprover-community/mathlib4/blob/743032e7ead097fb3e8ae5cd02d29cdd8899161c/Mathlib/CategoryTheory/Sites/Coherent/Basic.lean#L48-L63}{\faExternalLink}
  A category $\C$ is \emph{precoherent} if the collection of finite effective epimorphic families forms a coverage. In other words, if for any finite effective epimorphic family $(\pi_i \colon X_i\to B)_{i\in I}$ and any morphism $f \colon B'\to B$, there exists a finite effective epimorphic family $(\pi'_j \colon X'_j \to B')_{j\in I'}$, such that for each $j\in I'$, the composition $f\circ \pi_j'$ factors through $\pi_i$ for some $i\in I$.
  In this case, we call this coverage the \emph{coherent coverage} on $\C$, and the Grothendieck topology generated by this coverage is called the \emph{coherent topology} on $\C$.

\end{definition} 

In \mathlib, we define a predicate \lean{Precoherent} \href{https://github.com/leanprover-community/mathlib4/blob/743032e7ead097fb3e8ae5cd02d29cdd8899161c/Mathlib/CategoryTheory/Sites/Coherent/Basic.lean#L48-L63}{\faExternalLink} on categories:

\begin{lstlisting}
class Precoherent : Prop where
  pullback {B₁ B₂ : C} (f : B₂ ⟶ B₁) :
    ∀ (α : Type) [Finite α] (X₁ : α → C) (π₁ : (a : α) → (X₁ a ⟶ B₁)),
      EffectiveEpiFamily X₁ π₁ →
    ∃ (β : Type) (_ : Finite β) (X₂ : β → C) (π₂ : (b : β) → (X₂ b ⟶ B₂)),
      EffectiveEpiFamily X₂ π₂ ∧
      ∃ (i : β → α) (ι : (b :  β) → (X₂ b ⟶ X₁ (i b))),
      ∀ (b : β), ι b ≫ π₁ _ = π₂ _ ≫ f
\end{lstlisting}

Then the definition of the coherent topology follows \href{https://github.com/leanprover-community/mathlib4/blob/743032e7ead097fb3e8ae5cd02d29cdd8899161c/Mathlib/CategoryTheory/Sites/Coherent/Basic.lean#L65-L82}{\faExternalLink}:

\begin{lstlisting}
def coherentCoverage [Precoherent C] : Coverage C where
  covering B := { S | ∃ (α : Type) (_ : Finite α) (X : α → C) (π : (a : α) → (X a ⟶ B)),
    S = Presieve.ofArrows X π ∧ EffectiveEpiFamily X π }
  pullback := by ...

def coherentTopology [Precoherent C] : GrothendieckTopology C :=
  Coverage.toGrothendieck _ <| coherentCoverage C
\end{lstlisting}

\begin{remark}
  The notion of a precoherent category naturally arose through the formalization process, and was forced upon us by the ``\mathlib philosophy'' where definitions are often phrased in the most general way (see \S\ref{subsec:mathlib_generality}).
  Indeed, the condition that $\C$ is a precoherent category is precisely the minimal axiom needed to ensure that what we call the \emph{coherent coverage} above is indeed a coverage.
  A similar approach was taken to define the notion of a \emph{preregular} category.
  For example, we do not require the existence of pullbacks required in the definition of \emph{regular} and \emph{coherent} categories as in~\cite[A1.3]{elephant} and~\cite[A1.4]{elephant} respectively. 
  
  Due to our weaker assumptions, several of our results about the regular and coherent topology strengthen existing standard results. For example, \cite[Example C.2.1.12 (d)]{elephant} states that the coherent topology on a \emph{coherent} category is subcanonical, which we extend in Proposition~\ref{prop:coherent-subcanonical} below to \emph{precoherent} categories. The analogous statement for the regular topology on a regular category can be found in~\cite[Corollary B.3.6]{ultracategories}, and is extended to preregular categories in Proposition~\ref{prop:regular-subcanonical} below. In  Proposition~\ref{prop:mem-coherent-topology-iff} (respectively Lemma~\ref{lem:mem-regular-topology-iff}), we explicitly characterize the covering sieves in the coherent (respectively regular) topology on a precoherent (respectively preregular) category. Under stronger assumptions on the category, this result can be found in~\cite[Definition B.5.1 and Proposition B.5.2]{ultracategories}  (respectively in~\cite[C.2.1.12 (c)]{elephant}).
\end{remark}

\subsection{The coherent topology on a regular extensive category}
\begin{proposition}\label{prop:precoherent_of_preregular_extensive}
    \href{https://github.com/leanprover-community/mathlib4/blob/743032e7ead097fb3e8ae5cd02d29cdd8899161c/Mathlib/CategoryTheory/Sites/Coherent/Comparison.lean#L39-L54}{\faExternalLink}
    Let $\C$ be a category which is preregular and finitary extensive. Then $\C$ is precoherent.
\end{proposition}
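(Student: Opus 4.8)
The plan is to reduce the effective epimorphic \emph{family} to a single effective epimorphism, apply preregularity, and then use finitary extensivity to split the resulting cover back into a family indexed by the original set. Concretely, suppose we are given a finite effective epimorphic family $(\pi_i \colon X_i \to B)_{i \in I}$ together with a morphism $f \colon B' \to B$; we must produce a finite effective epimorphic family over $B'$ whose members, after composition with $f$, factor through the $\pi_i$. Since $\C$ has finite coproducts, Lemma~\ref{lem:effectiveEpi_of_family} shows that the induced map $\pi \colon \coprod_{i} X_i \to B$ is an effective epimorphism. Applying preregularity (Definition~\ref{def:preregular_cat}) to $\pi$ and $f$ yields an effective epimorphism $h \colon W \to B'$ together with a morphism $\iota \colon W \to \coprod_i X_i$ satisfying $\pi \circ \iota = f \circ h$.

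Next I would use extensivity to decompose $W$. Write $\kappa_i \colon X_i \to \coprod_k X_k$ for the coprojections, and for each $i$ form the pullback $W_i := W \times_{\coprod_k X_k} X_i$ of $\kappa_i$ along $\iota$, which exists because $\C$ has pullbacks along coprojections. Let $p_i \colon W_i \to W$ and $r_i \colon W_i \to X_i$ be the two projections, so that the pullback square gives $\iota \circ p_i = \kappa_i \circ r_i$. By the key fact extracted in the proof of Proposition~\ref{prop:extensiveCoverage_is_coverage}, pulling the coproduct diagram $(\kappa_i)_i$ back along $\iota$ exhibits $W$ as the coproduct of the $(W_i)_{i \in I}$ with coprojections $p_i$. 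I then set $\pi'_i := h \circ p_i \colon W_i \to B'$ and claim that $(\pi'_i)_{i \in I}$ is the desired family.

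To see that $(\pi'_i)_i$ is effective epimorphic, observe that under the identification $W \cong \coprod_i W_i$ via the $p_i$, the map induced by $(\pi'_i)_i$ on $\coprod_i W_i$ is exactly $h$, which is an effective epimorphism. I would therefore invoke Lemma~\ref{lem:effectiveEpiFamily_of_effectiveEpi}: hypothesis~\ref{pt:lemma_eff_of_eff:coproducts} holds since the relevant coproducts and pullbacks along coprojections exist, and hypothesis~\ref{pt:lemma_eff_of_eff:epi} — that for every $g \colon Z \to \coprod_i W_i$ the induced map $\coprod_i Z \times_{\coprod_i W_i} W_i \to Z$ is an epimorphism — again follows from extensivity, since pulling the coproduct decomposition of $W$ back along $g$ exhibits $Z$ as that coproduct, so the induced map is in fact an isomorphism. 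As for the factorization condition, for each $i$ we compute
\[ f \circ \pi'_i = f \circ h \circ p_i = \pi \circ \iota \circ p_i = \pi \circ \kappa_i \circ r_i = \pi_i \circ r_i, \]
using $\pi \circ \iota = f \circ h$, the pullback identity $\iota \circ p_i = \kappa_i \circ r_i$, and $\pi \circ \kappa_i = \pi_i$. Hence $f \circ \pi'_i$ factors through $\pi_i$ via $r_i$, so $(\pi'_i)_{i \in I}$ witnesses precoherence, with index map the identity on $I$.

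The step I expect to require the most care is the extensivity bookkeeping: one must justify twice that pulling a finite coproduct diagram back along an arbitrary morphism again yields a coproduct diagram, once to decompose $W$ as $\coprod_i W_i$ and once to verify the epimorphism hypothesis of Lemma~\ref{lem:effectiveEpiFamily_of_effectiveEpi}. Both instances are precisely the content isolated in Proposition~\ref{prop:extensiveCoverage_is_coverage}, so the genuine work lies only in matching the pullback projections and coprojections correctly; no essentially new difficulty should arise.
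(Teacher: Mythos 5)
Your proof is correct and follows essentially the same route as the paper: pass to the induced effective epimorphism on the coproduct via Lemma~\ref{lem:effectiveEpi_of_family}, apply preregularity to obtain an effective epimorphism over $B'$, use extensivity to decompose its source as a coproduct of pullbacks, and conclude with Lemma~\ref{lem:effectiveEpiFamily_of_effectiveEpi}. You spell out the verification of the hypotheses of Lemma~\ref{lem:effectiveEpiFamily_of_effectiveEpi} and the factorization computation in more detail than the paper does, but the argument is the same.
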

\begin{proof}
    Since $\C$ is finitary extensive, Lemmas~\ref{lem:effectiveEpi_of_family} and~\ref{lem:effectiveEpiFamily_of_effectiveEpi} imply that finite effective epimorphic families in $\C$ are precisely those which induce an effective epimorphism on the coproduct. 

    Let $(f_i \colon X_i \to X)_{i \in I}$ be a finite effective epimorphic family and let $g \colon Y \to X$ be a morphism. 
    Since the morphism $\coprod_i X_i \to X$ is an effective epimorphism, the fact that $\C$ is preregular ensures the existence of a diagram 
    \[\begin{tikzcd}
        Z & {\displaystyle{\coprod_i X_i}} \\
        Y & X
        \arrow["e", from=1-1, to=1-2]
        \arrow["f", from=1-2, to=2-2]
        \arrow["h"', from=1-1, to=2-1]
        \arrow["g"', from=2-1, to=2-2]
    \end{tikzcd}\]
    in which $h \colon Z \to Y$ is an effective epimorphism. 

    Now, the fact that $\C$ is extensive ensures that the family $(Z \times_{\coprod_i X_i} X_i \to Z)_{i \in I}$ exhibits $Z$ as a coproduct in the sense that the canonical map 
    \[ \coprod_i Z \times_{\coprod_i X_i} X_i \longrightarrow Z \]
    is an isomorphism.
    Therefore, the composition
    \[ \coprod_i Z \times_{\coprod_i X_i} X_i \longrightarrow Y \]
    is an effective epimorphism, and therefore the family $(Z \times_{\coprod_i X_i} X_i \to Y)_{i \in I}$ works as the desired effective epimorphic family.
\end{proof}

It is obvious that the union of two coverages is a coverage. This allows us to state:

\begin{proposition}\label{prop:extensive_regular_generate_coherent}
    \href{https://github.com/leanprover-community/mathlib4/blob/743032e7ead097fb3e8ae5cd02d29cdd8899161c/Mathlib/CategoryTheory/Sites/Coherent/Comparison.lean#L56-L94}{\faExternalLink}
    Let $\C$ be a category which is preregular and finitary extensive. The union of the regular and extensive coverages generates the coherent topology. 
\end{proposition}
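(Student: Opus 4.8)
The plan is to prove the equality of the two Grothendieck topologies by comparing the generating coverages through the characterization in Definition~\ref{def:Groth_top_coverage}: the topology generated by a coverage $\mathcal S$ is the smallest Grothendieck topology whose associated coverage contains $\mathcal S$. Write $J$ for the topology generated by the union of the regular and extensive coverages (which is a coverage by the remark preceding the statement) and $J_{\mathrm{coh}}$ for the coherent topology. To obtain $J = J_{\mathrm{coh}}$ it suffices to show that each generating coverage is contained in the coverage associated to the other's topology, i.e.\ that each generating presieve of one coverage generates a covering sieve of the other topology.

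For the inclusion $J \subseteq J_{\mathrm{coh}}$, I would argue that every presieve in the union coverage is a coherent covering presieve. A regular covering presieve is a singleton $\{f\}$ with $f$ an effective epimorphism, which is a (finite, one-element) effective epimorphic family and hence generates a covering sieve of $J_{\mathrm{coh}}$. An extensive covering presieve is a finite family $(X_i \to B)_{i\in I}$ exhibiting $B$ as a coproduct, so the induced map $\coprod_i X_i \to B$ is an isomorphism, in particular an effective epimorphism; as recorded in the proof of Proposition~\ref{prop:precoherent_of_preregular_extensive}, Lemmas~\ref{lem:effectiveEpi_of_family} and~\ref{lem:effectiveEpiFamily_of_effectiveEpi} identify, in a finitary extensive category, the finite effective epimorphic families with exactly those families inducing an effective epimorphism on the coproduct, so such a family is effective epimorphic and thus a coherent cover. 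Hence the union coverage is contained in the coverage associated to $J_{\mathrm{coh}}$, giving $J \subseteq J_{\mathrm{coh}}$.

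For the reverse inclusion $J_{\mathrm{coh}} \subseteq J$, I would show that every coherent covering presieve generates a covering sieve of $J$. Let $(\pi_i \colon X_i \to B)_{i\in I}$ be a finite effective epimorphic family and let $R = \langle (\pi_i)_i\rangle$ be the sieve it generates. By Lemma~\ref{lem:effectiveEpi_of_family} the induced map $\pi \colon \coprod_i X_i \to B$ is an effective epimorphism, so the sieve $S = \langle \pi\rangle$ is a regular covering sieve and therefore $S \in J(B)$. I would then apply the transitivity axiom~\ref{point:GroTop_closed}: it suffices to check that $g^*R \in J$ for every $g$ in $S$. Each such $g$ factors as $g = \pi \circ k$, so by functoriality of pullback $g^*R = k^*(\pi^*R)$, and by pullback-stability~\ref{point:GroTop_pullback} of $J$ it is enough to prove $\pi^*R \in J\bigl(\coprod_i X_i\bigr)$. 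Now each coprojection $\iota_i \colon X_i \to \coprod_i X_i$ satisfies $\pi \circ \iota_i = \pi_i \in R$, hence $\iota_i \in \pi^*R$, so $\pi^*R$ contains the extensive covering sieve generated by $(\iota_i)_i$ (which exhibits $\coprod_i X_i$ as a coproduct and thus lies in $J$); by Lemma~\ref{lem:superset_covering} this forces $\pi^*R \in J\bigl(\coprod_i X_i\bigr)$. Transitivity then yields $R \in J(B)$, completing this direction.

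Combining the two inclusions gives $J = J_{\mathrm{coh}}$. I expect the main obstacle to be the transitivity argument of the last paragraph: one must correctly compute the pullback sieve $\pi^*R$, observe that it contains the extensive cover of $\coprod_i X_i$, and then track the composite pullback $g^*R = k^*(\pi^*R)$ carefully through the pullback-stability and transitivity axioms. The first inclusion is comparatively routine once the identification of effective epimorphic families with coproduct-inducing effective epimorphisms (used in Proposition~\ref{prop:precoherent_of_preregular_extensive}) is invoked.
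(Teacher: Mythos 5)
Your proposal is correct and follows essentially the same route as the paper: the easy inclusion by noting that both generating coverages consist of coherent covers (via Lemmas~\ref{lem:effectiveEpi_of_family} and~\ref{lem:effectiveEpiFamily_of_effectiveEpi}), and the reverse inclusion by pulling the coherent cover back along the induced map $\pi\colon\coprod_i X_i\to B$, observing that $\pi^*R$ contains the extensive cover by the coprojections, and concluding with Lemma~\ref{lem:superset_covering}, pullback-stability, and transitivity. The only difference is cosmetic ordering of the transitivity step, and your framing of the reverse inclusion in terms of coherent covering \emph{presieves} is if anything slightly cleaner than the paper's.
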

\begin{proof}
Denote by $\T$ the topology generated by the union of the regular and extensive coverages. Note that the regular and extensive coverages are both contained in the coherent coverage, hence $\T$ is contained in the coherent topology, so it suffices to show that the coherent topology is contained in $\T$.

Let $X$ be an object of $\C$ and let $S$ be a covering sieve on $X$ for the coherent topology: in other words, $S$ is generated by a finite effective epimorphic family $(f_i\colon X_i \to X)_{i \in I}$. We want to show that $S$ is a $\T$-covering sieve. Denote by 
\[
    f\colon \coprod_{i\in I} X_i \longrightarrow X
\]
the map induced by the $f_i$ and for each $j \in I$, let
\[
    \iota_j\colon X_j \longrightarrow \coprod_{i\in I} X_i
\]
be the coprojection. For each $j$, 
\[
    f \circ \iota_j = f_j \in S, \text{ so }\iota_j \in f^*S.
\] 
Therefore, the sieve $T$ generated by the family $(\iota_i)_i$ is contained in $f^*S$. Since the presieve generated by the family $(\iota_i)_i$ is a covering presieve of the coproduct in the extensive coverage, $T$ is a $\T$-covering sieve and hence by Lemma~\ref{lem:superset_covering}, $f^*S$ is a $\T$-covering sieve of $\coprod_i X_i$. By Lemma~\ref{lem:effectiveEpi_of_family}, $f$ is an effective epimorphism, and hence the sieve $S_f$ generated by the singleton presieve $\{f\}$ is a $\T$-covering sieve. Now by axiom~\ref{point:GroTop_closed} for Grothendieck topologies, it suffices to show that $g^*S$ is a $\T$-covering sieve for every $g$ in $S_f$. Given such a $g = f \circ h$, we have $g^*S = h^*(f^*S)$ which is a $\T$-covering sieve because $f^*S$ is.
\end{proof}


\section{Sheaves} \label{sec:sheaves}

\subsection{Regular sheaves}
Let $\C$ be a preregular category (see Definition~\ref{def:preregular_cat}).

\begin{proposition}\label{prop:regular-subcanonical}
    \href{https://github.com/leanprover-community/mathlib4/blob/743032e7ead097fb3e8ae5cd02d29cdd8899161c/Mathlib/CategoryTheory/Sites/Coherent/RegularSheaves.lean#L260-L262}{\faExternalLink}
    The regular topology on $\C$ is subcanonical\footnote{A Grothendieck topology is called \emph{subcanonical} if every representable presheaf is a sheaf. By \emph{representable}, we mean a presheaf of the form $\Hom(-,W)$ for some object $W$ of $\C$.}. 
\end{proposition}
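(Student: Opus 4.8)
The plan is to reduce the statement to the level of the generating coverage and then invoke the characterization of effective epimorphic families from Lemma~\ref{lem:effective-epi-family-characterizations}. Since the regular topology is by definition the Grothendieck topology generated by the regular coverage, Proposition~\ref{prop:Presieve.isSheaf_coverage} applies: a presheaf is a sheaf for the regular topology if and only if it is a sheaf for every covering presieve in the regular coverage. It therefore suffices to show that each representable presheaf $h_W = \Hom(-,W)$ is a sheaf for every such covering presieve.

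By construction of the regular coverage, its covering presieves are exactly the one-element presieves $\{f\}$ where $f \colon X \to B$ is an effective epimorphism. Viewing such a single morphism as a family indexed by a singleton, Definitions~\ref{EffectiveEpi} and~\ref{EffectiveEpiFamily} coincide, so this one-element family is an effective epimorphic family precisely because $f$ is an effective epimorphism.

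Now I would apply the implication \ref{lem:eff_epi-fam-char:point_effepi}~$\Longrightarrow$~\ref{lem:eff_epi-fam-char:sheaf} of Lemma~\ref{lem:effective-epi-family-characterizations} to this one-element family. Since the family is effective epimorphic, the lemma yields that $h_W$ is a sheaf for the presieve $\{f\}$ (equivalently, for the sieve it generates) for every object $W$ of $\C$. As $f$ ranged over an arbitrary covering presieve of the regular coverage, combining this with the reduction of the first paragraph shows that every representable presheaf is a sheaf for the regular topology, which is exactly the assertion of subcanonicity.

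I do not expect any genuine obstacle here: essentially all of the mathematical content has been packaged into Lemma~\ref{lem:effective-epi-family-characterizations}, whose equivalence between a family being effective epimorphic and every representable being a sheaf for the generated sieve is precisely subcanonicity for a single cover. The only points requiring care are the routine identification of a single effective epimorphism with a one-element effective epimorphic family, and the passage from the Grothendieck topology to its generating coverage afforded by Proposition~\ref{prop:Presieve.isSheaf_coverage}.
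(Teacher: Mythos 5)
Your proposal is correct and follows the same route as the paper's proof: reduce to the regular coverage via Proposition~\ref{prop:Presieve.isSheaf_coverage}, identify a single effective epimorphism with a singleton effective epimorphic family, and conclude with the implication \ref{lem:eff_epi-fam-char:point_effepi}~$\Rightarrow$~\ref{lem:eff_epi-fam-char:sheaf} of Lemma~\ref{lem:effective-epi-family-characterizations}. No gaps.
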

\begin{proof}
    We need to show that each presheaf of the form $h_W = \Hom(-, W)$ with $W$ an object of $\C$ is a sheaf.
    By Proposition~\ref{prop:Presieve.isSheaf_coverage}, it is enough to check that $h_W$ is a sheaf for each family consisting of a single effective epimorphism. Noting that a singleton family is effective epimorphic if and only if it consists of an effective epimorphism, this is now clear from Lemma~\ref{lem:effective-epi-family-characterizations}.
\end{proof}

\begin{lemma}\label{lem:mem-regular-topology-iff}
    A sieve in $\C$ is a covering sieve for the regular topology if and only if it contains an effective epimorphism.
\end{lemma}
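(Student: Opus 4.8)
The plan is to prove the two implications separately, working throughout with the inductive (saturation) description of the regular topology: its covering sieves are exactly those sieves produced by the closure conditions \ref{point:Saturation_top}--\ref{point:Saturation_closed} applied to the regular coverage, whose covering presieves are the singletons $\{f\}$ with $f$ an effective epimorphism.

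For the ``if'' direction, suppose a sieve $S$ on an object $X$ contains an effective epimorphism $f\colon Y\to X$. Then $\{f\}$ is a covering presieve of the regular coverage, so by \ref{point:Saturation_pullback} the generated sieve $\langle f\rangle$ is a covering sieve. As $f\in S$ and $S$ is downward closed, every morphism factoring through $f$ already lies in $S$, i.e.\ $\langle f\rangle\subseteq S$; Lemma~\ref{lem:superset_covering} then promotes $S$ to a covering sieve.

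For the ``only if'' direction, I would show that the family $\mathcal{G}$ assigning to each object $X$ the set of sieves on $X$ that contain an effective epimorphism satisfies the three closure conditions \ref{point:Saturation_top}--\ref{point:Saturation_closed}. Since the saturation is by definition the smallest such family, this forces every regular-covering sieve to lie in $\mathcal{G}$, which is exactly the claim. Condition \ref{point:Saturation_top} holds because the top sieve contains the identity, an effective epimorphism, and \ref{point:Saturation_pullback} holds because $\langle f\rangle\ni f$. The content is in \ref{point:Saturation_closed}: given $S\in\mathcal{G}(X)$ and a sieve $R$ with $g^*R\in\mathcal{G}(\dom g)$ for every $g\in S$, choose an effective epimorphism $g_0\in S$; then $g_0^*R$ contains an effective epimorphism $h$, and unwinding the definition of the pullback sieve shows that the composite $g_0\circ h$ belongs to $R$. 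Thus $R$ will lie in $\mathcal{G}(X)$ as soon as we know that $g_0\circ h$ is itself an effective epimorphism.

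The main obstacle is precisely this last point, since composites of effective epimorphisms need not be effective epimorphisms in an arbitrary category; here the hypothesis that $\C$ is preregular is what makes it work. To verify the universal property of Definition~\ref{EffectiveEpi} for $g_0\circ h$, one first descends any test morphism coequalizing everything $g_0\circ h$ coequalizes along the effective epimorphism $h$, and then along $g_0$. The nontrivial step is checking that the intermediate descent coequalizes every pair that $g_0$ coequalizes: given such a pair $a,b$, I would apply the factorization property of Definition~\ref{def:preregular_cat} to lift $a$ and $b$ through $h$ after precomposing with suitable effective epimorphisms, and then use that effective epimorphisms are epimorphisms (Remark~\ref{rmk:effetive_epi_epi}) to cancel them. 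This yields the required unique descent, so $g_0\circ h\in R$ is an effective epimorphism and the induction closes.
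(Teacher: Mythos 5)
Your proposal is correct and follows essentially the same route as the paper, which simply defers to the proof of Proposition~\ref{prop:mem-coherent-topology-iff} (show that the sieves containing an effective epimorphism are closed under the generating/saturation conditions, the ``if'' direction being Lemma~\ref{lem:superset_covering} applied to the generated sieve) and explicitly flags composition-stability of effective epimorphisms in a preregular category as the key point. You correctly isolate that same key lemma, and your sketch of it --- descending first along $h$ then along $g_0$, using preregularity to lift a pair coequalized by $g_0$ through $h$ after precomposition with effective (hence ordinary) epimorphisms that can then be cancelled --- does go through.
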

\begin{proof}
    The proof is a simpler version of the proof of Proposition~\ref{prop:mem-coherent-topology-iff} below. The reader can easily take that proof and replace effective epimorphic families by effective epimorphisms, thereby filling in this proof (the key is to prove that effective epimorphisms in preregular categories are stable under composition). 
\end{proof}

\begin{lemma}\label{lemma:sheaf_iff_equalizerCondition}
    \href{https://github.com/leanprover-community/mathlib4/blob/743032e7ead097fb3e8ae5cd02d29cdd8899161c/Mathlib/CategoryTheory/Sites/Coherent/RegularSheaves.lean#L213-L222}{\faExternalLink}
    Suppose $\C$ has kernel pairs of effective epimorphisms. Then a presheaf $F$ on $\C$ is a sheaf for the regular topology if and only if for every effective epimorphism $\pi \colon X \to B$, the diagram 
    \begin{equation}\label{eq:kernel_pair_sheaf}\tag{EqCond}
        \begin{tikzcd}
        {F(B)} & {F(X)} & {F(X \times_B X)}
        \arrow["{F(\pi)}", from=1-1, to=1-2]
        \arrow[shift left, from=1-2, to=1-3]
        \arrow[shift right, from=1-2, to=1-3]
    \end{tikzcd}\end{equation}
    is an equalizer (the two parallel morphisms being given by the projections in the pullback). 
\end{lemma}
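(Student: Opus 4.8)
The plan is to reduce the statement to the generating coverage of the regular topology and then to unwind the sheaf condition for a single effective epimorphism using Remark~\ref{rmk:sheaf_with_pb}; both directions of the equivalence will then fall out of the same correspondence.

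First I would invoke Proposition~\ref{prop:Presieve.isSheaf_coverage}. Since the regular topology is generated by the regular coverage, a presheaf $F$ is a sheaf for the regular topology if and only if it is a sheaf for every covering presieve of that coverage. By the definition of the regular coverage, these covering presieves are precisely the singleton presieves $\{\pi\}$ attached to a single effective epimorphism $\pi \colon X \to B$. Hence being a sheaf for the regular topology is equivalent to the condition that $F$ is a sheaf for the presieve $\{\pi\}$ for every effective epimorphism $\pi \colon X \to B$.

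Next I would apply Remark~\ref{rmk:sheaf_with_pb} to each such singleton presieve. Regarding $\{\pi\}$ as a family indexed by a one-element set, the only pullback required by the remark is the kernel pair $X \times_B X$, which exists precisely because of the standing hypothesis that $\C$ has kernel pairs of effective epimorphisms. The remark then identifies the sheaf condition for $\{\pi\}$ with the assertion that the diagram \eqref{eq:kernel_pair_sheaf} is an equalizer: over a one-element index set the products $\prod_i F(X_i)$ and $\prod_{i,j} F(X_i \times_B X_j)$ collapse to $F(X)$ and $F(X \times_B X)$, and the two parallel maps become exactly those induced by the two projections of the kernel pair. Combining this with the reduction of the previous step yields the claimed equivalence in both directions at once.

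Since the argument is essentially a matter of unwinding definitions, I do not anticipate a serious mathematical obstacle. The only point that requires genuine care --- especially in the formalization --- is the bookkeeping in the last step: one must verify that the two maps produced by Remark~\ref{rmk:sheaf_with_pb} for the one-element family really do coincide with the maps $F(X) \rightrightarrows F(X \times_B X)$ induced by the kernel-pair projections, and that the collapse of the singleton products is mediated by the correct canonical isomorphisms. Once this identification is recorded, the lemma follows immediately.
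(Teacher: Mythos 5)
Your proposal is correct and follows exactly the paper's argument: reduce to the generating regular coverage via Proposition~\ref{prop:Presieve.isSheaf_coverage} and then apply the pullback reformulation of Remark~\ref{rmk:sheaf_with_pb} to the singleton presieve of a single effective epimorphism. The additional remarks about checking the identification of the parallel maps are a sensible elaboration of the same proof rather than a different route.
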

\begin{proof}
    This follows from the fact that a presheaf is a sheaf for the regular topology if and only if it is a sheaf for every family consisting of a single effective epimorphism, and the characterization (discussed in Remark~\ref{rmk:sheaf_with_pb}) of the sheaf condition in the case where the relevant pullbacks exist. 
\end{proof}

\begin{proposition}\label{prop:regularTopology.isSheaf_of_projective}
    \href{https://github.com/leanprover-community/mathlib4/blob/743032e7ead097fb3e8ae5cd02d29cdd8899161c/Mathlib/CategoryTheory/Sites/Coherent/RegularSheaves.lean#L233-L237}{\faExternalLink}
    Suppose every object in $\C$ is projective\footnote{An object $P$ is \emph{projective} if every morphism out of $P$ lifts along every epimorphism with the same target.}. Then every presheaf on $\C$ is a sheaf for the regular topology. 
\end{proposition}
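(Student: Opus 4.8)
The plan is to reduce the statement to a single effective epimorphism and then exploit projectivity to split it. First I would invoke Proposition~\ref{prop:Presieve.isSheaf_coverage}: since the regular topology is generated by the regular coverage, whose covering presieves are exactly the singletons $\{f\}$ with $f \colon X \to B$ an effective epimorphism, it suffices to show that an arbitrary presheaf $F$ is a sheaf for every such presieve $\{f\}$. This is the only conceptual input beyond elementary diagram-chasing.

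Next I would observe that $f$ is a \emph{split} epimorphism. Indeed, by Remark~\ref{rmk:effetive_epi_epi} an effective epimorphism is an epimorphism, and since $B$ is projective, the identity $\mathrm{id}_B$ lifts along $f$; this yields a section $s \colon B \to X$ with $f \circ s = \mathrm{id}_B$. It is crucial here that projectivity is applied to the \emph{codomain} $B$, which is where the assumption that every object is projective enters.

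Finally I would verify the sheaf condition for $\{f\}$ directly. A compatible family of elements for $\{f\}$ amounts to a single element $x \in F(X)$ subject to the compatibility constraint that $F(g)(x) = F(g')(x)$ whenever $g, g' \colon Y \to X$ satisfy $f \circ g = f \circ g'$. I claim the unique amalgamation is $F(s)(x)$. For uniqueness, any amalgamation $a \in F(B)$ satisfies $F(f)(a) = x$, and applying $F(s)$ together with $F(s) \circ F(f) = F(f \circ s) = F(\mathrm{id}_B) = \mathrm{id}$ forces $a = F(s)(x)$. For existence, I must check $F(f)(F(s)(x)) = x$; since $F(f) \circ F(s) = F(s \circ f)$, this is the equation $F(s \circ f)(x) = x$, which is precisely the compatibility constraint applied to the pair $g = s \circ f$ and $g' = \mathrm{id}_X$, as both composites with $f$ equal $f$ (using $f \circ s = \mathrm{id}_B$).

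The argument is entirely formal once the section is produced, so there is no genuine obstacle; the only point requiring care is the bookkeeping of variance — ensuring projectivity is applied to $B$ to obtain $s$, and recognizing that the existence of the amalgamation is exactly the instance of compatibility along the pair $(s \circ f,\ \mathrm{id}_X)$.
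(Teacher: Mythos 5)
Your proof is correct, but it takes a genuinely different route from the paper's. The paper argues that, because \emph{every} object is projective, any morphism $g \colon Y \to B$ lifts along an effective epimorphism $f \colon X \to B$ (projectivity of the domain $Y$), so the sieve generated by $\{f\}$ is the top sieve, and every presheaf is a sheaf for the top sieve. You instead apply projectivity only to the \emph{codomain} $B$ to split $f$ via a section $s$ with $f \circ s = \mathrm{id}_B$, and then verify the sheaf condition for the singleton presieve by hand, exhibiting $F(s)(x)$ as the unique amalgamation; your existence step, reading off $F(s\circ f)(x)=x$ from compatibility applied to the pair $(s\circ f,\ \mathrm{id}_X)$, is exactly right. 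What your approach buys is a slightly stronger statement: it shows that every presheaf is a sheaf for the regular topology as soon as every effective epimorphism in $\C$ is \emph{split}, a hypothesis strictly weaker than all objects being projective. What the paper's approach buys is brevity and a reusable observation (covering sieves degenerate to top sieves), which composes directly with the standard fact that presheaves are sheaves for the top sieve rather than requiring an explicit amalgamation computation. Both arguments correctly begin by reducing, via Proposition~\ref{prop:Presieve.isSheaf_coverage}, to the singleton covering presieves of the regular coverage.
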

\begin{proof}
    Since every object is projective, every sieve generated by an epimorphism is the top sieve, for which every presheaf is a sheaf. 
\end{proof}

\subsection{Extensive sheaves}
Let $\C$ be a finitary extensive category (see Definition~\ref{def:finitary extensive}).

\begin{proposition}\label{prop:isSheaf_iff_preservesFiniteProducts}
    \href{https://github.com/leanprover-community/mathlib4/blob/743032e7ead097fb3e8ae5cd02d29cdd8899161c/Mathlib/CategoryTheory/Sites/Coherent/ExtensiveSheaves.lean#L113-L132}{\faExternalLink}
    A presheaf on $\C$ is a sheaf with respect to the extensive topology if and only if it preserves finite products.
\end{proposition}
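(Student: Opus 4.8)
The plan is to invoke Proposition~\ref{prop:Presieve.isSheaf_coverage}, which reduces being a sheaf for the extensive topology to being a sheaf for every covering presieve of the extensive coverage. Up to the isomorphism $\coprod_i X_i \to B$ witnessing a presieve as extensive, such a presieve is a family of coprojections $(\iota_i \colon X_i \to \coprod_i X_i)_{i \in I}$ of a finite coproduct, so it suffices to analyze these. The key structural input is that in a finitary extensive category coproducts are \emph{disjoint} and their coprojections are monomorphisms: concretely, the pullback $X_i \times_{\coprod X} X_j$ is the initial object $\emptyset$ when $i \neq j$, and is $X_i$ with both projections equal to the identity when $i = j$. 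Since these pullbacks exist, Remark~\ref{rmk:sheaf_with_pb} expresses the sheaf condition for $(\iota_i)_i$ as the assertion that
\[
F\Bigl(\coprod_i X_i\Bigr) \longrightarrow \prod_i F(X_i) \rightrightarrows \prod_{i,j} F\bigl(X_i \times_{\coprod X} X_j\bigr)
\]
is an equalizer, the two parallel maps being induced by the two pullback projections.

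For the forward implication (sheaf $\Rightarrow$ preserves finite products), I would first treat the empty family, which is a covering presieve of the initial object $\emptyset$: its sheaf condition says precisely that $F(\emptyset)$ is a singleton, i.e.\ $F$ sends the empty coproduct to the terminal object. For a general finite family I then use the structural fact above to identify $\prod_{i,j} F\bigl(X_i \times_{\coprod X} X_j\bigr)$ with $\prod_i F(X_i) \times \prod_{i \neq j} F(\emptyset)$. Because $F(\emptyset)$ is now a singleton, the two parallel maps in the equalizer diagram agree on every component: they are the identity on the diagonal $i = j$ and factor through the one-point set $F(\emptyset)$ off the diagonal. An equalizer of two equal maps is an isomorphism onto the common source, so $F(\coprod_i X_i) \to \prod_i F(X_i)$ is a bijection, which is exactly the statement that $F$ preserves this finite product.

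For the converse (preserves finite products $\Rightarrow$ sheaf), preservation of the empty product again gives that $F(\emptyset)$ is terminal, and I claim that every family of elements $(x_i)_i$ with $x_i \in F(X_i)$ is then automatically compatible in the sense of Definition~\ref{def:compatible_family}. Indeed, compatibility may be checked on the pullbacks $X_i \times_{\coprod X} X_j$: on the diagonal it is the tautology $x_i = x_i$, while off the diagonal the required equation lives in the singleton $F(\emptyset)$ and so holds for free. The product-preservation isomorphism $F(\coprod_i X_i) \cong \prod_i F(X_i)$ then produces, for each such family, a unique amalgamation, so $F$ is a sheaf for every extensive covering presieve, hence for the extensive topology.

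The only genuinely non-formal step is establishing disjointness of coproducts, namely the identification of $X_i \times_{\coprod X} X_j$ with the initial object for $i \neq j$; this is exactly where the full van Kampen hypothesis (condition~\ref{pt:def_finitary_extensive:diagram} of Definition~\ref{def:finitary extensive}) is needed, rather than the weaker assumption that merely suffices to \emph{define} the extensive topology. Once $F(\emptyset)$ is known to be terminal, everything else is a routine bookkeeping exercise with the equalizer diagram, so I expect this disjointness input to be the main obstacle.
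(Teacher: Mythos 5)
Your proof is correct, and it is essentially the argument the paper has in mind: the paper's own ``proof'' is only a citation to \cite[Proposition B.4.5]{ultracategories} together with the remark that the argument needs only pullbacks along coprojections, and your writeup is a faithful expansion of exactly that argument (reduce to the extensive covering presieves via Proposition~\ref{prop:Presieve.isSheaf_coverage}, use disjointness of coproducts and monicity of coprojections to identify $X_i \times_{\coprod X} X_j$ with $\emptyset$ off the diagonal and with $X_i$ on it, handle the empty family to get $F(\emptyset)$ terminal, and then read off the equalizer condition). You also correctly identify the van Kampen condition~\ref{pt:def_finitary_extensive:diagram} as the point where the full \emph{finitary extensive} hypothesis, rather than the weaker one sufficient to define the topology, is actually used.
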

\begin{proof}
    This is proved in~\cite[Proposition B.4.5]{ultracategories} (there, the extensive topology is defined only for categories with pullbacks, but the proof remains valid in our setting since only pullbacks along coprojections of finite coproducts are used). Our formalization follows the same ideas used \emph{ibid}.
\end{proof}

\begin{proposition}\label{prop:extensive-subcanonical}
    \href{https://github.com/leanprover-community/mathlib4/blob/743032e7ead097fb3e8ae5cd02d29cdd8899161c/Mathlib/CategoryTheory/Sites/Coherent/ExtensiveSheaves.lean#L71-L73}{\faExternalLink}
      The extensive topology on $\C$ is subcanonical. 
  \end{proposition}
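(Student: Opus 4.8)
The plan is to reduce the statement to the universal property of coproducts, by way of the characterization of extensive sheaves already established. Recall that saying the extensive topology is subcanonical means precisely that every representable presheaf $h_W = \Hom(-, W)$, for $W$ an object of $\C$, is a sheaf for this topology. By Proposition~\ref{prop:isSheaf_iff_preservesFiniteProducts}, a presheaf on $\C$ is a sheaf for the extensive topology if and only if it preserves finite products. Hence it suffices to check that each representable $h_W$ preserves finite products.

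Unwinding this condition for the contravariant functor $h_W$, I would need to show that for every finite family $(T_i)_{i \in I}$ of objects of $\C$ the canonical comparison map
\[
    \Hom\Bigl(\coprod_{i} T_i,\, W\Bigr) \longrightarrow \prod_{i} \Hom(T_i, W),
\]
which sends a morphism $\coprod_i T_i \to W$ to the family of its precompositions with the coprojections $T_i \to \coprod_i T_i$, is a bijection. But this is exactly the universal property of the coproduct $\coprod_i T_i$: to give a morphism out of the coproduct is the same as to give a family of morphisms out of the individual summands. In the degenerate case $I = \emptyset$, the coproduct is the initial object and $\Hom(\emptyset, W)$ is a singleton, which matches the empty product (the terminal object) in $\Set$.

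There is no substantial obstacle here: once Proposition~\ref{prop:isSheaf_iff_preservesFiniteProducts} is in hand, the entire statement collapses to the very definition of a coproduct, which is precisely the reason why representable functors carry coproducts to products. The only point requiring a little care is bookkeeping, namely confirming that the comparison map furnished by Proposition~\ref{prop:isSheaf_iff_preservesFiniteProducts} genuinely coincides with the canonical coproduct-to-product map rather than a reindexing of it; but this is immediate, since both maps are defined in terms of the same coprojections.
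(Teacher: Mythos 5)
Your proof is correct and follows exactly the paper's route: both reduce to Proposition~\ref{prop:isSheaf_iff_preservesFiniteProducts} and then observe that representable presheaves $\Hom(-,W)$ preserve finite products (i.e., send coproducts to products), which the paper states in one line and you spell out via the universal property of the coproduct.
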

  \begin{proof}
    Since $\Hom(-, W)$ preserves limits, this follows from Proposition~\ref{prop:isSheaf_iff_preservesFiniteProducts}
  \end{proof}

  \begin{proposition}\label{prop:extensive_covering_sieves} 
    \href{https://github.com/leanprover-community/mathlib4/blob/743032e7ead097fb3e8ae5cd02d29cdd8899161c/Mathlib/CategoryTheory/Sites/Coherent/ExtensiveTopology.lean#L26-L57}{\faExternalLink}
    Let $X$ be an object of $\C$ and $S$ a sieve on $X$. Then $S$ is a covering sieve for the extensive topology on $\C$ if and only if it contains a family of morphisms $(f_i : X_i \to X)_{i \in I}$ which exhibit $X$ as a coproduct of the $X_i$.  
\end{proposition}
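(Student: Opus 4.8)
The plan is to prove both implications by unwinding the inductive description of the Grothendieck topology generated by a coverage, i.e.\ the \lean{saturate} construction recalled above. Throughout, I write $\mathcal{E}(X)$ for the collection of sieves $S$ on $X$ that contain a finite family $(f_i \colon X_i \to X)_{i \in I}$ exhibiting $X$ as a coproduct of the $X_i$ (finiteness being the relevant notion in a finitary extensive category, where only finite coproducts are available). The goal is precisely that $\mathcal{E}(X)$ coincides with the collection of covering sieves for the extensive topology.

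The easy implication is that every sieve in $\mathcal{E}(X)$ is covering. Indeed, if $S$ contains such a coproduct family $(f_i)_i$, then the presieve $\{f_i\}_i$ is a covering presieve of the extensive coverage, so the sieve $\langle (f_i)_i \rangle$ it generates is covering by the very definition of the generated topology (the \lean{of} constructor). Since $S \supseteq \langle (f_i)_i \rangle$, Lemma~\ref{lem:superset_covering} shows that $S$ is covering as well.

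For the converse, I would show that the family $\bigl(\mathcal{E}(X)\bigr)_{X \in \C}$ satisfies the three defining closure properties \ref{point:Saturation_top}, \ref{point:Saturation_pullback}, and \ref{point:Saturation_closed} of the saturation; by minimality this forces every covering sieve to lie in the corresponding $\mathcal{E}(X)$. Equivalently, one argues by induction on the derivation witnessing $S \in \lean{saturate}$. The top sieve lies in $\mathcal{E}(X)$ because it contains $\mathrm{id}_X$, and the one-element family $(\mathrm{id}_X)$ exhibits $X$ as a coproduct; and a generated covering presieve of the extensive coverage lies in $\mathcal{E}(X)$ by construction, being generated by a finite family exhibiting $X$ as a coproduct.

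The only substantial point is the transitivity step. Suppose $R \in \mathcal{E}(X)$, witnessed by a finite coproduct family $(g_j \colon Y_j \to X)_{j \in J}$ contained in $R$, and suppose that for every morphism $f$ in $R$ the pullback $f^*S$ lies in $\mathcal{E}(\dom f)$. Applying this to each $g_j$ yields a finite family $(h_{j,k} \colon Z_{j,k} \to Y_j)_{k \in K_j}$ exhibiting $Y_j$ as a coproduct, with each $h_{j,k} \in g_j^*S$, that is, $g_j \circ h_{j,k} \in S$. Then the composite family $(g_j \circ h_{j,k} \colon Z_{j,k} \to X)_{(j,k)}$ is contained in $S$, is indexed by the finite set $\coprod_j K_j$, and exhibits $X$ as a coproduct, via the canonical isomorphisms $\coprod_{(j,k)} Z_{j,k} \cong \coprod_j \coprod_k Z_{j,k} \cong \coprod_j Y_j \cong X$. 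Hence $S \in \mathcal{E}(X)$, completing the induction. I expect the main obstacle to be exactly this composition of coproduct diagrams, together with the bookkeeping needed to see that the doubly-indexed family remains finite and genuinely exhibits $X$ as a coproduct; everything else is formal.
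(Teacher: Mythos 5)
Your proof is correct and is essentially the paper's: the paper's proof of this proposition simply says to take the proof of Proposition~\ref{prop:mem-coherent-topology-iff} and replace effective epimorphic families by families exhibiting the target as a coproduct, which is exactly the argument you give (easy direction via the generated sieve and Lemma~\ref{lem:superset_covering}; hard direction by minimality, with the composition of coproduct families as the transitivity step). The only cosmetic difference is that you run the minimality argument over the saturation closure properties \ref{point:Saturation_top}--\ref{point:Saturation_closed} rather than over the Grothendieck-topology axioms, which spares you the pullback-stability check~\ref{point:GroTop_pullback} that the paper's template instead discharges using the coverage axiom of Proposition~\ref{prop:extensiveCoverage_is_coverage}.
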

\begin{proof}
  The proof is a simpler version of the proof of Proposition~\ref{prop:mem-coherent-topology-iff} below. The reader can easily take that proof and replace effective epimorphic families by families of morphisms exhibiting the target as a coproduct, thereby filling in this proof.
\end{proof}

\subsection{Coherent sheaves}
\begin{proposition}\label{prop:coherent-subcanonical}
  \href{https://github.com/leanprover-community/mathlib4/blob/743032e7ead097fb3e8ae5cd02d29cdd8899161c/Mathlib/CategoryTheory/Sites/Coherent/CoherentSheaves.lean#L61-L63}{\faExternalLink}
	Let $\C$ be a precoherent category \textup{(}see Definition~\ref{def:coherent coverage}\textup{)}. The coherent topology on $\C$ is subcanonical. 
\end{proposition}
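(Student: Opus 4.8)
The plan is to unwind the definition of \emph{subcanonical} and reduce the verification to the covering presieves of the coherent coverage. Concretely, I must show that every representable presheaf $h_W = \Hom(-, W)$, for $W$ an object of $\C$, is a sheaf for the coherent topology. Since the coherent topology is by definition (see Definition~\ref{def:coherent coverage}) the Grothendieck topology generated by the coherent coverage, I would first invoke Proposition~\ref{prop:Presieve.isSheaf_coverage}: a presheaf is a sheaf for a topology generated by a coverage if and only if it is a sheaf for every covering presieve of that coverage. This reduces the problem to checking that $h_W$ is a sheaf for every covering presieve in the coherent coverage, which are precisely the finite effective epimorphic families $(\pi_i \colon X_i \to B)_{i \in I}$.

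The second and decisive step is to apply Lemma~\ref{lem:effective-epi-family-characterizations}, whose equivalence \ref{lem:eff_epi-fam-char:point_effepi}~$\iff$~\ref{lem:eff_epi-fam-char:sheaf} states exactly that a family $(\pi_i)_i$ is effective epimorphic if and only if, for every object $W$ of $\C$, the presheaf $h_W$ is a sheaf for the sieve generated by $(\pi_i)_i$. Since being a sheaf for a presieve is equivalent to being a sheaf for the sieve it generates (the proposition preceding Definition~\ref{def:pullback_sieve}), and since the covering presieves of the coherent coverage are by construction effective epimorphic families, this equivalence delivers the sheaf condition immediately: each such family is effective epimorphic, hence $h_W$ is a sheaf for it.

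This argument is the exact analogue of the proof of Proposition~\ref{prop:regular-subcanonical} for the regular topology, with finite effective epimorphic families playing the role of single effective epimorphisms. Accordingly, I do not anticipate any genuine obstacle: all the substantive content has already been isolated in Lemma~\ref{lem:effective-epi-family-characterizations}, and the present proof amounts to assembling the reduction of Proposition~\ref{prop:Presieve.isSheaf_coverage} with that characterization. The only point that requires a moment's care is confirming that the covering presieves of the coherent coverage are \emph{exactly} the finite effective epimorphic families, so that the hypothesis \ref{lem:eff_epi-fam-char:point_effepi} of the characterization lemma is satisfied for each of them; this is immediate from Definition~\ref{def:coherent coverage}.
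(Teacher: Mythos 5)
Your proof is correct and follows exactly the paper's argument: reduce to the covering presieves of the coherent coverage via Proposition~\ref{prop:Presieve.isSheaf_coverage}, then apply the equivalence \ref{lem:eff_epi-fam-char:point_effepi}~$\iff$~\ref{lem:eff_epi-fam-char:sheaf} of Lemma~\ref{lem:effective-epi-family-characterizations} to each finite effective epimorphic family. No gaps; you have simply written out the same two-step reduction in more detail than the paper does.
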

\begin{proof}
  We need to show that each presheaf of the form $h_W = \Hom(-, W)$ with $W$ an object of $\C$ is a sheaf.
  By Proposition~\ref{prop:Presieve.isSheaf_coverage}, it is enough to check that $h_W$ is a sheaf for each finite effective epimorphic family, and this follows from Lemma~\ref{lem:effective-epi-family-characterizations}.
\end{proof}

\begin{remark}
  If $\C$ is finitary extensive and preregular (and hence precoherent), then Proposition~\ref{prop:coherent-subcanonical} implies Proposition~\ref{prop:extensive-subcanonical} and Proposition~\ref{prop:regular-subcanonical}, because the coherent topology is finer than the extensive and regular one. 
  On the other hand, being precoherent might not in general imply being finitary extensive or preregular (for example, when $\C$ does not have finite coproducts) and this is why we proved Proposition~\ref{prop:extensive-subcanonical} and Proposition~\ref{prop:regular-subcanonical} separately.
\end{remark}

\begin{proposition}\label{prop:mem-coherent-topology-iff}
  \href{https://github.com/leanprover-community/mathlib4/blob/743032e7ead097fb3e8ae5cd02d29cdd8899161c/Mathlib/CategoryTheory/Sites/Coherent/CoherentTopology.lean#L78-L99}{\faExternalLink}
  Let $\C$ be a precoherent category. A sieve in $\C$ is a covering sieve for the coherent topology if and only if it contains a finite effective epimorphic family.
\end{proposition}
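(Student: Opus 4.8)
The plan is to prove both implications, treating the direction ``contains a finite effective epimorphic family $\implies$ covering'' as the easy one and ``covering $\implies$ contains a finite effective epimorphic family'' as the substantial one, handled by induction on the inductive construction of the coherent topology. For the easy direction, suppose a sieve $S$ on $X$ contains a finite effective epimorphic family $(f_i\colon X_i\to X)_{i\in I}$. By definition of the coherent coverage, the presieve $(f_i)_i$ is a covering presieve, so the sieve $\langle (f_i)_i\rangle$ it generates is a covering sieve for the coherent topology (it is one of the generators in the saturation, property~\ref{point:Saturation_pullback}). Since $S$ is a sieve containing the $f_i$, it contains $\langle (f_i)_i\rangle$, and Lemma~\ref{lem:superset_covering} then shows $S$ is covering.

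For the converse, I would argue by induction on the inductive predicate \lean{saturate} defining the covering sieves (equivalently, show that the collection of sieves containing a finite effective epimorphic family satisfies the three saturation conditions~\ref{point:Saturation_top}--\ref{point:Saturation_closed}). The constructor \lean{top} is immediate, since the top sieve contains the singleton family $(\mathrm{id}_X)$, which is effective epimorphic; the constructor \lean{of} is immediate, since a generating presieve in the coherent coverage is by definition a finite effective epimorphic family contained in the sieve it generates. The constructor \lean{transitive} is where the work lies: we are given sieves $R, S$ on $X$ with $R$ covering, containing (by the inductive hypothesis) a finite effective epimorphic family $(f_i\colon X_i\to X)_{i\in I}$, and with the property that for each $f_i\in R$ the pullback $f_i^* S$ is covering, hence (again by induction) contains a finite effective epimorphic family $(g_{ij}\colon Y_{ij}\to X_i)_{j\in J_i}$. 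Since $g_{ij}\in f_i^* S$ we have $f_i\circ g_{ij}\in S$, so the natural candidate is the (finite) family $(f_i\circ g_{ij})_{i\in I, j\in J_i}$, and everything reduces to the key claim that this composite family is effective epimorphic.

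The main obstacle is therefore proving that effective epimorphic families are stable under this kind of composition. I would establish it directly from the universal property of Definition~\ref{EffectiveEpiFamily}: given a family $(e_{ij})$ coequalizing every pair coequalized by $(f_i\circ g_{ij})$, one first checks that for each fixed $i$ the family $(e_{ij})_j$ coequalizes every pair coequalized by $(g_{ij})_j$, producing a unique $d_i\colon X_i\to W$ with $d_i\circ g_{ij}=e_{ij}$; one then checks that $(d_i)_i$ coequalizes every pair coequalized by $(f_i)_i$, producing the desired $d\colon X\to W$, with uniqueness coming from the fact that effective epimorphic families are jointly epimorphic (the family analogue of Remark~\ref{rmk:effetive_epi_epi}).

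The delicate point is verifying the compatibility of $(d_i)_i$: to compare $d_{i_1}\circ k_1$ and $d_{i_2}\circ k_2$ for a pair $k_1, k_2$ with $f_{i_1}\circ k_1=f_{i_2}\circ k_2$, I would use precoherence to refine $k_1$ and then $k_2$ through the families $(g_{i_1 j})_j$ and $(g_{i_2 j})_j$, reducing the equality --- via joint epimorphy of the refining families --- to an instance of the hypothesis that $(e_{ij})$ coequalizes pairs coequalized by $(f_i\circ g_{ij})$. This is precisely where precoherence is used, and it is the family-level counterpart of the statement, flagged in Lemma~\ref{lem:mem-regular-topology-iff}, that effective epimorphisms in a preregular category are stable under composition.
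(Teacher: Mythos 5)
Your proposal is correct, and its top-level architecture coincides with the paper's: the easy direction is exactly Lemma~\ref{lem:effective-epi-implies-coherent} (generated sieve is covering, then enlarge via Lemma~\ref{lem:superset_covering}), and the converse is the verification that sieves containing a finite effective epimorphic family form a Grothendieck topology, with the top and pullback axioms handled just as in the paper and the transitivity axiom reduced to the closure of finite effective epimorphic families under composition. Where you genuinely diverge is in the proof of that composition lemma (the paper's Lemma~\ref{lem:effective-epi-transitive}). The paper proves it indirectly: by Lemma~\ref{lem:effective-epi-family-characterizations} a family is effective epimorphic iff each representable $h_W$ is a sheaf for the generated sieve, so by subcanonicity of the coherent topology (Proposition~\ref{prop:coherent-subcanonical}) it suffices to show the sieve generated by the composite family is a coherent covering sieve, which follows from the Grothendieck topology axioms already established for the coherent topology. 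You instead verify the universal property of Definition~\ref{EffectiveEpiFamily} directly: build the intermediate morphisms $d_i$, and check their compatibility by using precoherence twice to refine the test pair $k_1, k_2$ through the families $(g_{i_1 j})_j$ and $(g_{i_2 j})_j$, then conclude by joint epimorphy of the refining families. I checked this refinement argument and it goes through. The paper's route is shorter given the sheaf-theoretic machinery already in place, and reuses Lemma~\ref{lem:effective-epi-family-characterizations} rather than redoing a diagram chase; your route is more elementary and self-contained, avoids any appeal to subcanonicity or to representable presheaves, and makes completely explicit that precoherence is the only hypothesis driving the composition lemma --- which is precisely the point the paper only gestures at when it remarks that Lemma~\ref{lem:mem-regular-topology-iff} hinges on stability of effective epimorphisms under composition in preregular categories.
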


Before proving Proposition~\ref{prop:mem-coherent-topology-iff} we provide some preliminary results.
\begin{lemma}\label{lem:effective-epi-implies-coherent}
  \href{https://github.com/leanprover-community/mathlib4/blob/743032e7ead097fb3e8ae5cd02d29cdd8899161c/Mathlib/CategoryTheory/Sites/Coherent/CoherentTopology.lean#L24-L36}{\faExternalLink}
  If a sieve $S$ contains a finite effective epimorphic family, then $S$ is a covering sieve for the coherent topology.
\end{lemma}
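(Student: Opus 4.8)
The plan is to reduce the statement to two ingredients already available: the defining property of the Grothendieck topology \emph{generated} by a coverage, and the stability of covering sieves under enlargement provided by Lemma~\ref{lem:superset_covering}. The whole argument is short, and the only real content is to correctly pass from a covering presieve of the coherent coverage to a covering sieve of the generated coherent topology.

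Concretely, I would proceed as follows. Suppose $S$ is a sieve on an object $X$ which contains a finite effective epimorphic family $(\pi_i \colon X_i \to X)_{i \in I}$, meaning that each $\pi_i$ belongs to $S$. By Definition~\ref{def:coherent coverage}, the presieve $P$ on $X$ consisting of the morphisms $(\pi_i)_{i \in I}$ is a covering presieve of the coherent coverage. Next, I would invoke the realization of the coherent topology as the Grothendieck topology generated by this coverage: by the saturation property~\ref{point:Saturation_pullback} (equivalently, by the \lstinline{of} clause of the inductive type \lstinline{saturate}), the sieve $\langle P \rangle$ generated by any covering presieve of the coverage is itself a covering sieve. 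Hence $\langle P \rangle$ is a covering sieve on $X$ for the coherent topology. Finally, since each $\pi_i$ lies in $S$ and $S$ is downward closed, every morphism factoring through some $\pi_i$ again lies in $S$; that is, $\langle P \rangle \subseteq S$. Applying Lemma~\ref{lem:superset_covering} to this inclusion, with $\langle P \rangle$ a covering sieve, yields that $S$ is a covering sieve, as required.

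I do not expect any substantial obstacle here. The single point deserving care is the middle step, namely that the sieve generated by a covering presieve of the coherent coverage is a covering sieve of the generated topology; this is precisely what property~\ref{point:Saturation_pullback} encodes, so no genuine work is needed beyond unwinding the definitions. Everything else is formal: recognizing $P$ as a covering presieve is immediate from Definition~\ref{def:coherent coverage}, and the containment $\langle P \rangle \subseteq S$ is a direct consequence of $S$ being a sieve.
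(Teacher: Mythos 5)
Your proposal is correct and is essentially the paper's own argument: pass to the sieve generated by the effective epimorphic family, note it is a covering sieve by the definition of the generated (coherent) topology, observe it is contained in $S$ by downward closure, and conclude via Lemma~\ref{lem:superset_covering}. The only difference is that you spell out the middle step via the saturation property~\ref{point:Saturation_pullback}, which the paper leaves as ``by definition.''
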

\begin{proof}
  Let $(\pi_i \colon X_i \to X)_{i\in I}$ be a finite effective epimorphic family contained in $S$.
  By definition, the sieve $S_0$ generated by the family $(\pi_i)_{i\in I}$ is a covering sieve for the coherent topology, and since $S$ contains the family $(\pi_i)_{i\in I}$, it contains $S_0$. Lemma~\ref{lem:superset_covering} yields the conclusion.
\end{proof}

\begin{lemma}\label{lem:effective-epi-transitive}
  \href{https://github.com/leanprover-community/mathlib4/blob/743032e7ead097fb3e8ae5cd02d29cdd8899161c/Mathlib/CategoryTheory/Sites/Coherent/CoherentTopology.lean#L38-L68}{\faExternalLink}
	Assume that $\C$ is precoherent and that $(\pi_i\colon X_i\to B)_{i\in I}$ is a finite effective epimorphic family, and suppose that for each $i\in I$, we are given a finite effective epimorphic family $(\pi_{i,j}\colon Y_{i,j}\to X_i)_{j\in J_i}$. Then the induced collection $(\varpi_{i,j}=\pi_i\circ\pi_{i,j}\colon Y_{i,j}\to B)_{i\in I, j\in J_i}$ is an effective epimorphic family.  
\end{lemma}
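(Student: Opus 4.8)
The plan is to argue entirely at the level of sieves rather than to unwind the universal property of effective epimorphic families directly. The reason is that a direct attack would produce, for each $i$, a descent map $d_i \colon X_i \to W$ from the family $(\pi_{i,j})_j$, and gluing these to a single $d \colon B \to W$ would require comparing them along pullbacks of the $(\pi_{i,j})$ against arbitrary test morphisms---pullbacks that a merely precoherent $\C$ need not possess. Routing the argument through subcanonicity avoids this. Concretely, write $S = \langle \varpi_{i,j}\rangle$ for the sieve on $B$ generated by the composite family and $R = \langle \pi_i \rangle$ for the sieve generated by $(\pi_i)_i$. By the equivalence \ref{lem:eff_epi-fam-char:point_effepi}~$\iff$~\ref{lem:eff_epi-fam-char:sheaf} of Lemma~\ref{lem:effective-epi-family-characterizations}, proving that $(\varpi_{i,j})$ is effective epimorphic is the same as proving that $h_W$ is a sheaf for $S$ for every object $W$. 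Since the coherent topology is subcanonical (Proposition~\ref{prop:coherent-subcanonical}), each representable $h_W$ is a sheaf for every coherent covering sieve, so it suffices to show that $S$ is a covering sieve for the coherent topology.

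To establish that $S$ covers, I would first observe that $R$ is a coherent covering sieve, since it contains the finite effective epimorphic family $(\pi_i)_i$ and Lemma~\ref{lem:effective-epi-implies-coherent} applies. I would then invoke the transitivity axiom~\ref{point:GroTop_closed}: to conclude that $S$ covers it is enough to verify that $f^*S$ is a covering sieve for every $f \in R$. Because $R$ is generated by the presieve $(\pi_i)_i$, any such $f$ factors as $f = \pi_i \circ h$ for some $i \in I$, and by functoriality of the pullback of sieves one has $f^*S = h^*(\pi_i^*S)$.

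The crux is then to show that $\pi_i^*S$ is a coherent covering sieve. This is immediate once one notes that $\pi_i^*S$ contains each $\pi_{i,j}$: indeed $\pi_i \circ \pi_{i,j} = \varpi_{i,j} \in S$, so $\pi_{i,j} \in \pi_i^*S$ by the definition of the pullback sieve. Hence $\pi_i^*S$ contains the finite effective epimorphic family $(\pi_{i,j})_{j \in J_i}$ and is a coherent covering sieve by Lemma~\ref{lem:effective-epi-implies-coherent}; applying pullback stability~\ref{point:GroTop_pullback} along $h$ shows that $f^*S = h^*(\pi_i^*S)$ covers, which closes the argument. I do not expect a genuine obstacle here---the only care needed is the bookkeeping around the factorization $f = \pi_i \circ h$ and the sieve identity $f^*S = h^*(\pi_i^*S)$---precisely because the subcanonical reformulation removes the need for any pullbacks of the families $(\pi_{i,j})$ themselves.
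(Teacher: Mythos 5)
Your proposal is correct and follows essentially the same route as the paper's proof: reduce via Lemma~\ref{lem:effective-epi-family-characterizations} and subcanonicity (Proposition~\ref{prop:coherent-subcanonical}) to showing the generated sieve is a coherent covering sieve, then apply axiom~\ref{point:GroTop_closed} over the covering sieve generated by $(\pi_i)_i$ together with axiom~\ref{point:GroTop_pullback} and Lemma~\ref{lem:effective-epi-implies-coherent} applied to $\pi_i^*S$. Your explicit bookkeeping of the factorization $f = \pi_i \circ h$ and the identity $f^*S = h^*(\pi_i^*S)$ just spells out what the paper leaves implicit.
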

\begin{proof}
  By Lemma~\ref{lem:effective-epi-family-characterizations}, a family is effective epimorphic if and only if for each object $W$ the presheaf~$h_W$ is a sheaf for the sieve generated by this family.
  Thus, since the coherent topology is subcanonical by Proposition~\ref{prop:coherent-subcanonical}, it is enough to show that the sieve $S$ generated by the family $(\varpi_{i,j})_{i\in I, j\in J_i}$ is a covering sieve for the coherent topology.
	 
  By and~\ref{point:GroTop_closed} of Definition~\ref{def:grothendieck-topology}, it is enough to check that $f^*S$ is a covering sieve for every map $f$ in the sieve generated by $(\pi_i)_{i\in I}$ (which is a covering sieve by Lemma~\ref{lem:effective-epi-implies-coherent}). 
  In fact, by~\ref{point:GroTop_pullback}, it is enough to check that each $\pi_i^*S$ is a covering sieve.
  Since $\pi_i^*S$ contains the finite effective epimorphic family $(\pi_{i,j})_{j\in I_j}$, it is a covering sieve for the coherent topology by Lemma~\ref{lem:effective-epi-implies-coherent}.
\end{proof}

\begin{proof}[Proof of Proposition~\ref{prop:mem-coherent-topology-iff}]
Let $\T$ denote the collection of sieves in $\C$ that contain a finite effective epimorphic family. By Lemma~\ref{lem:effective-epi-implies-coherent}, we know that $\T$ is contained in the coherent topology. Our goal is to show that they are equal, so it remains to show that $\T$ contains the coherent topology. By definition, the coherent topology is the smallest Grothendieck topology whose associated coverage contains the coherent coverage. Therefore, it suffices to show that
\begin{list_cases}
	\item the collection $\T$ forms a Grothendieck topology and \label{mem_coh_prop:is_grothendieck}
	\item the coverage associated to $\T$ contains the coherent coverage. \label{mem_coh_prop:contains_coherent}
\end{list_cases}
Once~\ref{mem_coh_prop:is_grothendieck} is established, point~\ref{mem_coh_prop:contains_coherent} is immediate from the definitions of $\T$ and of the associated coverage (Definition~\ref{def:Groth_top_coverage}). It remains to show~\ref{mem_coh_prop:is_grothendieck} by checking the conditions of Definition~\ref{def:grothendieck-topology}. Condition~\ref{point:GroTop_top} is immediate, since for every object $X$ of $\C$, the identity on $X$ forms a finite effective epimorphic family. Condition~\ref{point:GroTop_pullback} is a consequence of the precoherence assumption: Let $f\colon X\to Y$ be a morphism and let $S$ be a sieve on $Y$ that is contained in $\T$, i.e. that contains a finite effective epimorphic family $(\pi_i \colon Y_i\to Y)_{i\in I}$. Then the condition of being precoherent (see Definition~\ref{def:coherent coverage}) provides an effective epimorphic family $(\pi_j'\colon:X_j\to X)_{j\in I'}$ that is contained in the pullback sieve $f^*S$. Finally, we address~\ref{point:GroTop_closed}. Let $S,R$ be sieves on $Y$ with $S\in \T$ such that for every $f\colon X\to Y \in S$, the pullback sieve $f^*R$ is in $\T$. Then we have a finite effective epimorphic family $(f_i\colon X_i \to Y)_{i\in I}$ contained in $S$, and for each $i\in I$, a finite effective epimorphic family $(g_{i,j}\colon X_{i,j} \to X_i)_{j\in J_i}$ contained in $f_i^*R$. By Lemma~\ref{lem:effective-epi-transitive}, the finite family $(f_i\circ g_{i,j}\colon X_{i,j}\to Y)_{i\in I, j\in J_i}$ is effective epimorphic. By Definition~\ref{def:pullback_sieve} of the pullback sieve, the composition $f_i\circ g_{i,j} $ factors through some morphism in $R$, hence lies in~$R$ for each pair $(i,j)$. Thus the whole family $(f_i\circ g_{i,j})_{i\in I, j\in J_i}$ is contained in $R$, showing that $R\in \T$. This finishes the proof of Condition~\ref{point:GroTop_closed}.
\end{proof}

\begin{proposition}\label{prop:regular_extensive_sheaf}
    \href{https://github.com/leanprover-community/mathlib4/blob/743032e7ead097fb3e8ae5cd02d29cdd8899161c/Mathlib/CategoryTheory/Sites/Coherent/SheafComparison.lean#L235-L242}{\faExternalLink}
    Let $\C$ be a preregular, finitary extensive category with pullbacks of kernel pairs. A presheaf on $\C$ is a sheaf for the coherent topology if and only if it satisfies the equalizer condition~\eqref{eq:kernel_pair_sheaf} of Lemma~\ref{lemma:sheaf_iff_equalizerCondition}, and preserves finite products. 
\end{proposition}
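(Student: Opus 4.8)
The plan is to reduce the statement to the two separate characterizations already established—Lemma~\ref{lemma:sheaf_iff_equalizerCondition} for regular sheaves and Proposition~\ref{prop:isSheaf_iff_preservesFiniteProducts} for extensive sheaves—by exploiting the fact (Proposition~\ref{prop:extensive_regular_generate_coherent}) that the coherent topology is generated by the union of the regular and extensive coverages. First I would check that the hypotheses make everything well-defined: since $\C$ is preregular and finitary extensive, it is precoherent by Proposition~\ref{prop:precoherent_of_preregular_extensive}, so the coherent topology exists; and the assumption that $\C$ has pullbacks of kernel pairs is exactly the hypothesis required to invoke Lemma~\ref{lemma:sheaf_iff_equalizerCondition}.

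The key step is to show that a presheaf $F$ is a sheaf for the coherent topology if and only if it is \emph{simultaneously} a sheaf for the regular topology and a sheaf for the extensive topology. To see this, let $K$ be the union of the regular coverage $K_{\mathrm{reg}}$ and the extensive coverage $K_{\mathrm{ext}}$; by Proposition~\ref{prop:extensive_regular_generate_coherent}, the Grothendieck topology generated by $K$ is the coherent topology. By Proposition~\ref{prop:Presieve.isSheaf_coverage}, $F$ is a sheaf for the coherent topology if and only if it is a sheaf for every covering presieve of $K$. Since the covering presieves of $K$ are precisely those of $K_{\mathrm{reg}}$ together with those of $K_{\mathrm{ext}}$, this holds if and only if $F$ is a sheaf for every covering presieve of $K_{\mathrm{reg}}$ and for every covering presieve of $K_{\mathrm{ext}}$. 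Applying Proposition~\ref{prop:Presieve.isSheaf_coverage} once more to each of the regular and extensive topologies separately, this is equivalent to $F$ being a sheaf for both the regular and the extensive topologies.

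Finally I would assemble the two characterizations: by Lemma~\ref{lemma:sheaf_iff_equalizerCondition}, $F$ is a sheaf for the regular topology if and only if it satisfies the equalizer condition~\eqref{eq:kernel_pair_sheaf}, and by Proposition~\ref{prop:isSheaf_iff_preservesFiniteProducts}, $F$ is a sheaf for the extensive topology if and only if it preserves finite products. Combining these with the equivalence of the previous paragraph yields the claim. I expect the only delicate point to be the bookkeeping in the middle step—carefully matching the covering presieves of the union coverage with those of the two constituent coverages, so that the single application of Proposition~\ref{prop:Presieve.isSheaf_coverage} to $K$ really does decouple into the two separate sheaf conditions—while the remaining steps are a direct assembly of results already proved.
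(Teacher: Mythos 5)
Your proposal is correct and follows essentially the same route as the paper: both reduce the statement, via Proposition~\ref{prop:extensive_regular_generate_coherent} and the observation that sheaves for a union of coverages are exactly the presheaves that are sheaves for every covering presieve of each constituent coverage, to the separate characterizations in Lemma~\ref{lemma:sheaf_iff_equalizerCondition} and Proposition~\ref{prop:isSheaf_iff_preservesFiniteProducts}. Your middle paragraph simply spells out, using Proposition~\ref{prop:Presieve.isSheaf_coverage}, the decoupling step that the paper dispatches with ``it is easy to see.''
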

\begin{proof}
    It is easy to see that a presheaf is a sheaf for the topology generated by a union of coverages if and only if it is a sheaf for every covering presieve of both coverages~\href{https://github.com/leanprover-community/mathlib4/blob/743032e7ead097fb3e8ae5cd02d29cdd8899161c/Mathlib/CategoryTheory/Sites/Coverage.lean#L424-L429}{\faExternalLink}. The result now follows by combining Proposition~\ref{prop:extensive_regular_generate_coherent} with Lemma~\ref{lemma:sheaf_iff_equalizerCondition} and Proposition~\ref{prop:isSheaf_iff_preservesFiniteProducts}. 
\end{proof}

\begin{proposition}\label{prop:regular_extensive_sheaf_projective}
    \href{https://github.com/leanprover-community/mathlib4/blob/743032e7ead097fb3e8ae5cd02d29cdd8899161c/Mathlib/CategoryTheory/Sites/Coherent/SheafComparison.lean#L244-L247}{\faExternalLink}
    Let $\C$ be a preregular, finitary extensive category in which every object is projective. A presheaf on $\C$ is a sheaf for the coherent topology if and only if it preserves finite products. 
\end{proposition}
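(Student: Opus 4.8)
The plan is to reduce the coherent sheaf condition to the extensive one, exploiting the fact that projectivity renders the regular condition vacuous. The starting point is Proposition~\ref{prop:extensive_regular_generate_coherent}, which tells us that on a preregular, finitary extensive category the coherent topology is generated by the union of the regular and extensive coverages. Combined with the standard fact that a presheaf is a sheaf for the topology generated by a union of two coverages if and only if it is a sheaf for every covering presieve of each coverage separately (the same fact invoked in the proof of Proposition~\ref{prop:regular_extensive_sheaf}, and a refinement of Proposition~\ref{prop:Presieve.isSheaf_coverage}), this shows that a presheaf $F$ on $\C$ is a sheaf for the coherent topology if and only if it is simultaneously a sheaf for the regular topology and a sheaf for the extensive topology.

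The next step is to observe that the regular half of this condition is automatically satisfied. Indeed, since every object of $\C$ is projective, Proposition~\ref{prop:regularTopology.isSheaf_of_projective} guarantees that \emph{every} presheaf on $\C$ is a sheaf for the regular topology. Hence the conjunction above collapses, and $F$ is a coherent sheaf if and only if it is an extensive sheaf.

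Finally, I would appeal to Proposition~\ref{prop:isSheaf_iff_preservesFiniteProducts}, which characterizes the sheaves for the extensive topology on a finitary extensive category as precisely the finite-product-preserving presheaves. Chaining these three equivalences yields the claim.

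There is essentially no obstacle here beyond assembling the earlier results in the correct order; the one conceptual point worth flagging is that this argument deliberately sidesteps Proposition~\ref{prop:regular_extensive_sheaf}, whose hypotheses include the existence of pullbacks of kernel pairs. We do not wish to assume those pullbacks exist, and we can avoid them precisely because projectivity lets us discharge the regular condition outright via Proposition~\ref{prop:regularTopology.isSheaf_of_projective}, rather than through the equalizer reformulation of Lemma~\ref{lemma:sheaf_iff_equalizerCondition}, which is what forces the kernel-pair assumption in Proposition~\ref{prop:regular_extensive_sheaf}.
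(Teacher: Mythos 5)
Your proposal is correct and follows exactly the paper's argument: the paper's proof likewise combines Proposition~\ref{prop:extensive_regular_generate_coherent} and the union-of-coverages fact (as in Proposition~\ref{prop:regular_extensive_sheaf}) with Proposition~\ref{prop:regularTopology.isSheaf_of_projective} to discharge the regular condition and Proposition~\ref{prop:isSheaf_iff_preservesFiniteProducts} for the extensive one. Your closing remark about why the kernel-pair hypothesis of Proposition~\ref{prop:regular_extensive_sheaf} is not needed here is a correct and worthwhile clarification, but the route is the same.
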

\begin{proof}
    As in the proof of Proposition~\ref{prop:regular_extensive_sheaf}, the result follows by combining Proposition~\ref{prop:regularTopology.isSheaf_of_projective} with Proposition~\ref{prop:isSheaf_iff_preservesFiniteProducts}.
\end{proof}

\begin{proposition}\label{prop:sheafEquiv}
  \href{https://github.com/leanprover-community/mathlib4/blob/743032e7ead097fb3e8ae5cd02d29cdd8899161c/Mathlib/CategoryTheory/Sites/Coherent/SheafComparison.lean#L108-L117}{\faExternalLink}
  Let $\C$ be a category and let $F \colon \C \to \D$ be a fully faithful functor into a precoherent category $\D$ such that
\begin{list_conditions}
  \item $F$ preserves and reflects finite effective epimorphic families.\label{pt:propSheafEquiv:condReflects}
  \item  For every object $Y$ of $\D$, there exists an object $X$ of $\C$ and an effective epimorphism $F(X) \to Y$.\label{pt:propSheafEquiv:condExist}
\end{list_conditions}
Then the following holds:
\begin{list_cases}
\item $\C$ is precoherent.\label{pt:propSheafEquiv:precoherent}
\item Let $G$ be a sheaf for the coherent topology on $\D$. The presheaf $G \circ F^{\operatorname{op}}$ is a sheaf for the coherent topology on $\C$.\label{pt:propSheafEquiv:comp}
\item Precomposition with $F$ induces an equivalence between the categories of sheaves for the coherent topology on $\C$ and on $\D$.\label{pt:propSheafEquiv:equiv}
\end{list_cases}
\end{proposition}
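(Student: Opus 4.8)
The plan is to treat the three parts in order, since each relies on the previous: part~\ref{pt:propSheafEquiv:precoherent} makes the coherent topology on $\C$ available, part~\ref{pt:propSheafEquiv:comp} exhibits the restriction functor $(-)\circ F^{\op}$, and part~\ref{pt:propSheafEquiv:equiv} upgrades it to an equivalence. The recurring technical device is to use hypothesis~\ref{pt:propSheafEquiv:condExist} to replace an arbitrary object of $\D$ by the image of an object of $\C$ sitting above it via an effective epimorphism, and then to transport effective epimorphic families across $F$ using the reflection part of~\ref{pt:propSheafEquiv:condReflects} together with full faithfulness.

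For~\ref{pt:propSheafEquiv:precoherent}, I would start with a finite effective epimorphic family $(\pi_i \colon X_i \to B)_{i \in I}$ in $\C$ and a morphism $f \colon B' \to B$. Applying $F$ and using that it preserves such families, $(F\pi_i)_i$ is a finite effective epimorphic family in $\D$; since $\D$ is precoherent there is a finite effective epimorphic family $(\rho_k \colon W_k \to FB')_{k \in K}$ in $\D$ with each $Ff \circ \rho_k$ factoring through some $F\pi_i$. By~\ref{pt:propSheafEquiv:condExist} choose effective epimorphisms $e_k \colon FZ_k \to W_k$ with $Z_k$ in $\C$; Lemma~\ref{lem:effective-epi-transitive} applied in $\D$ (composing $(\rho_k)_k$ with the effective epimorphic singletons $\{e_k\}$) shows that $(\rho_k \circ e_k)_k$ is again effective epimorphic. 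Full faithfulness writes $\rho_k \circ e_k = F(\sigma_k)$ for unique $\sigma_k \colon Z_k \to B'$, and since $F$ reflects finite effective epimorphic families, $(\sigma_k)_k$ is one in $\C$. Finally, factoring each $Ff\circ\rho_k$ through some $F\pi_i$ and transporting the resulting factorization of $F(f\circ\sigma_k)$ back through $F$ via full faithfulness shows that each $f \circ \sigma_k$ factors through some $\pi_i$, which is exactly precoherence of $\C$.

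For~\ref{pt:propSheafEquiv:comp}, by Proposition~\ref{prop:Presieve.isSheaf_coverage} it suffices to check that $G \circ F^{\op}$ is a sheaf for every presieve generated by a finite effective epimorphic family $(\pi_i \colon X_i \to B)_i$ in $\C$. Given a compatible family $(x_i)$ with $x_i \in G(FX_i)$, the crux is to promote it to a compatible family for the family $(F\pi_i)_i$ in $\D$, which is covering since $F$ preserves effective epimorphic families. Here a test object $Y$ of $\D$ need not lie in the image of $F$, so I would cover it by an effective epimorphism $e \colon FZ \to Y$ from~\ref{pt:propSheafEquiv:condExist}, use that $G(e)$ is injective (uniqueness of amalgamations for the covering singleton $\{e\}$, $G$ being a coherent sheaf), and thereby reduce the required compatibility equation to a commutative square living in $\C$, which holds by assumption on $(x_i)$ after applying faithfulness. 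The unique amalgamation of $(x_i)$ in $G(FB)$ furnished by the sheaf $G$ is then the required amalgamation for $G\circ F^{\op}$, and uniqueness transports back in the same manner.

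For~\ref{pt:propSheafEquiv:equiv}, I would exhibit $\C$ as a \emph{dense subsite} of $\D$ and invoke the comparison lemma (e.g.~\cite[C2.2.3]{elephant}), so that precomposition with $F$ is automatically an equivalence. Two facts must be verified. First, cover-density: every object $Y$ of $\D$ admits, by~\ref{pt:propSheafEquiv:condExist}, an effective epimorphism $FX \to Y$, whose associated singleton family is effective epimorphic and hence generates a coherent covering sieve (Lemma~\ref{lem:effective-epi-implies-coherent}) generated by a morphism out of the image of $F$. Second, the coherent topologies must match along $F$: a sieve $S$ on $X$ is coherent-covering in $\C$ if and only if the sieve generated by $\{F(g) : g \in S\}$ is coherent-covering on $FX$. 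Using Proposition~\ref{prop:mem-coherent-topology-iff}, the forward direction is immediate from preservation of effective epimorphic families; the backward direction is the main obstacle, and I would handle it exactly as in part~\ref{pt:propSheafEquiv:precoherent}: extract a finite effective epimorphic family from the pushforward sieve, precompose its members with effective epimorphisms $FZ_k \to W_k$ supplied by~\ref{pt:propSheafEquiv:condExist}, rewrite the composites as $F$ of morphisms of $\C$ lying in $S$ (using downward closure of $S$ and full faithfulness), and then reflect to obtain a finite effective epimorphic family sitting inside $S$. With cover-density, the matching of topologies, full faithfulness, and the continuity established in part~\ref{pt:propSheafEquiv:comp} in hand, the dense-subsite equivalence theorem yields~\ref{pt:propSheafEquiv:equiv}.
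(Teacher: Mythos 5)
Your argument for part~\ref{pt:propSheafEquiv:precoherent} and for part~\ref{pt:propSheafEquiv:equiv} is essentially the paper's: the paper also transports an effective epimorphic family along $F$, repairs it with condition~\ref{pt:propSheafEquiv:condExist} and Lemma~\ref{lem:effective-epi-transitive}, and reflects back through full faithfulness; and your ``dense subsite plus comparison lemma'' route for the equivalence is exactly the paper's combination of Definition~\ref{def:inducedTopology}, the verification that the induced topology on $\C$ coincides with the coherent topology (with the same forward/backward argument via Proposition~\ref{prop:mem-coherent-topology-iff}), cover-density from condition~\ref{pt:propSheafEquiv:condExist}, and Proposition~\ref{prop:sheafEquiv_of_cocontinuous_coverDense}. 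The one place you genuinely diverge is part~\ref{pt:propSheafEquiv:comp}: the paper gets it for free from Lemma~\ref{lem:coverLiftingPreserving_induced} (continuity of $F$ for the induced topology, which by that point has been identified with the coherent topology), whereas you verify the sheaf condition by hand, promoting a compatible family for $(\pi_i)_i$ in $\C$ to one for $(F\pi_i)_i$ in $\D$ by covering an arbitrary test object $Y$ with some $F(Z)\to Y$ and using injectivity of $G$ applied to that effective epimorphism. That direct check is correct (the injectivity does follow from uniqueness of amalgamations for the covering singleton, and fullness/faithfulness reduce the compatibility square to one in $\C$), and it has the virtue of being self-contained; the cost is that it duplicates work that the induced-topology machinery, which you need anyway for part~\ref{pt:propSheafEquiv:equiv}, already delivers.
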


Before proving Proposition~\ref{prop:sheafEquiv}, we need to fix some terminology and state some preliminary results. These preliminaries were already in \mathlib, and we simply state them here without proof. The results can be extracted from~\cite[Exposé III]{SGA4}, but the approach~\emph{ibid}.~differs slightly from the one in \mathlib.

\begin{definition}\label{def:functorPushforwardPullback} Let $\C$ and $\D$ be two categories, both endowed with a Grothendieck topology, and let $F\colon \C \to \D$ be a functor. Fix an object $X$ in $\C$ and an object $Y$ in $\D$.
  \begin{list_cases}
      \item Given a sieve $S$ on $X$, the \emph{functor-pushforward of $S$ along $F$} is the sieve $F_*S$ on $F(X)$ consisting of those morphisms $f \colon Y \to F(X)$ that factor through $F(g)$ for some morphism $g \colon Z \to X$ in $S$. \href{https://github.com/leanprover-community/mathlib4/blob/743032e7ead097fb3e8ae5cd02d29cdd8899161c/Mathlib/CategoryTheory/Sites/Sieves.lean#L689-L696}{\faExternalLink}
      \item Given a sieve $S$ on $F(X)$, the \emph{functor-pullback of $S$ along $F$} is the sieve $F^*S$ on $X$ consisting of those arrows $f$ such that $F(f)$ belongs to $S$. \href{https://github.com/leanprover-community/mathlib4/blob/743032e7ead097fb3e8ae5cd02d29cdd8899161c/Mathlib/CategoryTheory/Sites/Sieves.lean#L647-L657}{\faExternalLink}
      \item The \emph{$F$-image sieve} is the sieve $S^F_Y$ on $Y$ consisting of those morphisms to $Y$ that factor through an object of the form $F(X)$ for some $X$ in $\C$. \href{https://github.com/leanprover-community/mathlib4/blob/743032e7ead097fb3e8ae5cd02d29cdd8899161c/Mathlib/CategoryTheory/Sites/DenseSubsite.lean#L75-L80}{\faExternalLink}
  \end{list_cases}
\end{definition}
We omit the verification that Definition~\ref{def:functorPushforwardPullback} is actually defining sieves (one needs to check that they are downwards closed). This verification was formalized in \mathlib, and each point of Definition~\ref{def:functorPushforwardPullback} contains the corresponding external link.

\begin{definition}\label{def:functor_continuous_etc} In the same setting of Definition~\ref{def:functorPushforwardPullback}, denote by $\T$ the topology on $\C$ and by $\T'$ that on~$\D$.
  \begin{list_cases}
      \item We say that $F$ is \emph{continuous} if for every $\T'$-sheaf $P$ on $\D$, the presheaf $P \circ F^{\op}$ on $\C$ is a $\T$-sheaf \href{https://github.com/leanprover-community/mathlib4/blob/743032e7ead097fb3e8ae5cd02d29cdd8899161c/Mathlib/CategoryTheory/Sites/CoverPreserving.lean#L175-L178}{\faExternalLink}. In particular, if $F$ is continuous it induces a functor
      \[
          F^* \colon \Sh_{\T'}(\D) \longrightarrow \Sh_{\T}(\C).
      \]
      \item We say that $F$ is \emph{cocontinuous} if for every object $U$ of $\C$ and every $\T'$-covering sieve $S$ on $F(U)$, the functor-pullback $F^*S$ is a $\T$-covering sieve of $U$. \href{https://github.com/leanprover-community/mathlib4/blob/743032e7ead097fb3e8ae5cd02d29cdd8899161c/Mathlib/CategoryTheory/Sites/CoverLifting.lean#L69-L74}{\faExternalLink}
      \item We say that $F$ is \emph{cover-dense} if for every object $Y$ of $\D$, the $F$-image sieve $S^F_Y$ is a $\T'$-covering sieve.~\href{https://github.com/leanprover-community/mathlib4/blob/743032e7ead097fb3e8ae5cd02d29cdd8899161c/Mathlib/CategoryTheory/Sites/DenseSubsite.lean#L88-L94}{\faExternalLink}\label{pt:def_cont_etc:cover_dense}
  \end{list_cases}
\end{definition}

\begin{remark}
  Observe that in point~\ref{pt:def_cont_etc:cover_dense} of Definition~\ref{def:functor_continuous_etc} the topology $\T$ on $\C$ plays no role. Hence, to speak of cover-dense functors one only needs a Grothendieck topology on the target.
\end{remark}

\begin{proposition}\label{prop:sheafEquiv_of_cocontinuous_coverDense}
  \href{https://github.com/leanprover-community/mathlib4/blob/743032e7ead097fb3e8ae5cd02d29cdd8899161c/Mathlib/CategoryTheory/Sites/CoverLifting.lean#L319-L353}{\faExternalLink}
  \href{https://github.com/leanprover-community/mathlib4/blob/743032e7ead097fb3e8ae5cd02d29cdd8899161c/Mathlib/CategoryTheory/Sites/DenseSubsite.lean#L546-L551}{\faExternalLink}
  In the setting of Definition~\ref{def:functor_continuous_etc}, suppose that $F$ is continuous and cocontinuous. Then we have an adjunction $F^* \dashv F_*$. If $F$ is also fully faithful and cover-dense, then this adjunction is an adjoint equivalence of categories. 
\end{proposition}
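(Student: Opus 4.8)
The plan is to reduce both assertions to standard facts about restriction and right Kan extension on presheaf categories, descending to sheaves via the two continuity hypotheses. Throughout, write $u = F^{\op}\colon\C^{\op}\to\D^{\op}$, so that the restriction functor $u^* = (-)\circ u\colon \mathrm{PSh}(\D)\to\mathrm{PSh}(\C)$ has a right adjoint $\operatorname{Ran}_u\colon\mathrm{PSh}(\C)\to\mathrm{PSh}(\D)$ given by right Kan extension, with $(\operatorname{Ran}_u Q)(Y)=\lim_{(X,\,\varphi\colon F(X)\to Y)}Q(X)$, the limit ranging over the comma category of objects of $\C$ equipped with a map from their image to $Y$.

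First I would establish the adjunction $F^*\dashv F_*$. Continuity of $F$ is exactly the statement that $u^*$ sends $\T'$-sheaves to $\T$-sheaves, so $F^*$ is simply the restriction of $u^*$ to sheaf subcategories. Cocontinuity is precisely the hypothesis ensuring that $\operatorname{Ran}_u$ sends $\T$-sheaves to $\T'$-sheaves (this is the adjunction attached to a cocontinuous functor, the first of the two facts recorded without proof above); granting this, $F_*$ is the restriction of $\operatorname{Ran}_u$ to sheaves. Since the inclusions $\Sh_{\T}(\C)\hookrightarrow\mathrm{PSh}(\C)$ and $\Sh_{\T'}(\D)\hookrightarrow\mathrm{PSh}(\D)$ are full and both functors preserve these subcategories, the presheaf-level adjunction $u^*\dashv\operatorname{Ran}_u$ restricts to the desired $F^*\dashv F_*$.

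For the equivalence I would check that the counit $\varepsilon\colon F^*F_*\Rightarrow\operatorname{id}_{\Sh_{\T}(\C)}$ and the unit $\eta\colon\operatorname{id}_{\Sh_{\T'}(\D)}\Rightarrow F_*F^*$ are both isomorphisms. The counit is handled by full faithfulness alone: since $F$ is fully faithful so is $u$, hence $\operatorname{Ran}_u$ is fully faithful, which is equivalent to the counit of $u^*\dashv\operatorname{Ran}_u$ being an isomorphism; restricting to sheaves shows $\varepsilon$ is an isomorphism. Cover-density is what makes the unit an isomorphism. Fixing a $\T'$-sheaf $P$ and an object $Y$, the formula above gives $(F_*F^*P)(Y)=\lim_{(X,\,\varphi\colon F(X)\to Y)}P(F(X))$, and this limit is canonically the one computing the sheaf condition for $P$ against the $F$-image sieve $S^F_Y$, every arrow of which factors through some $F(X)$. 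Cover-density says $S^F_Y$ is $\T'$-covering, so the sheaf property of $P$ identifies $P(Y)$ with this limit through the unit map; hence $\eta$ is an isomorphism, and $F^*\dashv F_*$ is an adjoint equivalence.

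The hard part will be the unit step, specifically the identification of the limit computing $\operatorname{Ran}_u$ over the comma category $(Y\downarrow u)$ with the limit computing the sheaf condition over $S^F_Y$, regarded as the category of arrows into $Y$ belonging to the sieve. These indexing categories are not literally the same, and the crux is to show that the comparison functor between them induces an isomorphism of limits (i.e.\ that it is initial); this is exactly where full faithfulness of $F$ re-enters, guaranteeing that morphisms in the comma category correspond bijectively to morphisms over $Y$ between the underlying arrows $F(X)\to Y$.
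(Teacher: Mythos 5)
The paper offers no proof of this proposition to compare against: it is listed among the ``preliminaries already in \mathlib, \dots\ stated here without proof,'' with a pointer to~\cite[Expos\'e~III]{SGA4}. Judged on its own, your route is the standard one and its first two stages are sound. The adjunction $F^*\dashv F_*$ does restrict from the presheaf-level adjunction $u^*\dashv\operatorname{Ran}_u$ once one grants the (standard, but itself nontrivial) lemma that cocontinuity forces $\operatorname{Ran}_u$ to carry $\T$-sheaves to $\T'$-sheaves; and the counit is an isomorphism because $\operatorname{Ran}_u$ along a fully faithful functor is fully faithful, which passes to the full subcategories of sheaves.

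The gap is in the unit step, precisely where you flagged the difficulty, and the tool you propose there would fail. The comparison functor from the comma category $(F\downarrow Y)$ to the sieve $S^F_Y$, viewed as a full subcategory of $\D_{/Y}$, is \emph{not} initial in general: for an object $f\colon Z\to Y$ of $S^F_Y$, the relevant comma category consists of morphisms $F(X)\to Z$ over $Y$, and such morphisms need not exist at all --- membership of $f$ in $S^F_Y$ only supplies a map $Z\to F(X_0)$, in the wrong direction. Full faithfulness identifies $(F\downarrow Y)$ with the full subcategory of $\D_{/Y}$ on arrows whose \emph{domain} lies in the image of $F$, which is a proper subcategory of $S^F_Y$ whose inclusion is not initial. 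In terms of families of elements: a cone over $(F\downarrow Y)$ is a family $(x_\varphi)$ indexed by arrows $\varphi\colon F(X)\to Y$ compatible only along morphisms of the form $F(g)$, whereas a compatible family for $S^F_Y$ (equivalently, for its generating presieve) must be compatible over \emph{every} commutative square with an arbitrary vertex $W$ of $\D$. Bridging that discrepancy is exactly where cover-density must be used a second time: one covers $W$ (and each domain $Z$) by the covering sieve $S^F_W$ and invokes separatedness of the sheaf $P$ to upgrade the weaker compatibility to the stronger one --- this is the content of the comparison lemma (\cite[C2.2.3]{elephant}). The identification of the two limits is therefore true for sheaves $P$, but as a consequence of cover-density at every object of $\D$ together with the sheaf condition, not of full faithfulness and an initiality argument.
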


\begin{definition}\label{def:inducedTopology}
  \href{https://github.com/leanprover-community/mathlib4/blob/743032e7ead097fb3e8ae5cd02d29cdd8899161c/Mathlib/CategoryTheory/Sites/InducedTopology.lean#L68-L95}{\faExternalLink}
  Let $\C$ and $\D$ be categories, let $\T'$ be a Grothendieck topology on $\D$ and let $F \colon \C \to \D$ be a fully faithful cover-dense functor. Define a Grothendieck topology $\T$ on $\C$ as follows: we declare that a sieve $S$ on an object $X$ in $\C$ is a $\T$-covering sieve if and only if the functor-pushforward sieve $F_*S$ is a $\T'$-covering sieve of $F(X)$ (see \href{https://github.com/leanprover-community/mathlib4/blob/743032e7ead097fb3e8ae5cd02d29cdd8899161c/Mathlib/CategoryTheory/Sites/InducedTopology.lean#L68-L95}{\faExternalLink} for a proof of the fact that this indeed defines a Grothendieck topology). This is called the \emph{topology induced by $F$}.
\end{definition}

\begin{lemma}\label{lem:coverLiftingPreserving_induced}
  \href{https://github.com/leanprover-community/mathlib4/blob/743032e7ead097fb3e8ae5cd02d29cdd8899161c/Mathlib/CategoryTheory/Sites/InducedTopology.lean#L152-L159}{\faExternalLink}
  Let $\C$ and $\D$ be categories, let $\T'$ a Grothendieck topology on $\D$ and let $F \colon \C \to \D$ be a fully faithful cover-dense functor. Equip $\C$ with the induced topology. Then $F$ is continuous and cocontinuous. 
\end{lemma}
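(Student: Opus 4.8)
The plan is to treat the two claims separately: I would establish cocontinuity directly from the Grothendieck topology axioms using cover-density and fullness, and then deduce continuity by a sheaf-gluing argument over the image sieve.

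For \emph{cocontinuity}, recall from Definition~\ref{def:functor_continuous_etc} that I must show that for every object $U$ of $\C$ and every $\T'$-covering sieve $S$ on $F(U)$, the functor-pullback $F^*S$ is a $\T$-covering sieve on $U$. By the very definition of the induced topology (Definition~\ref{def:inducedTopology}), this is equivalent to showing that the functor-pushforward $F_*(F^*S)$ is a $\T'$-covering sieve on $F(U)$. I would prove this by invoking axiom~\ref{point:GroTop_closed} with the covering sieve $S$ itself: it suffices to check that for each $g \colon Y \to F(U)$ in $S$, the pullback $g^*\bigl(F_*(F^*S)\bigr)$ is $\T'$-covering on $Y$. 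This is where cover-density and fullness enter. By cover-density the $F$-image sieve $S^F_Y$ is $\T'$-covering, so by Lemma~\ref{lem:superset_covering} it is enough to show the inclusion $S^F_Y \subseteq g^*\bigl(F_*(F^*S)\bigr)$. Taking $t \colon Z \to Y$ in $S^F_Y$, we may write $t = m \circ l$ with $m \colon F(W) \to Y$; then $g \circ m \colon F(W) \to F(U)$, and since $F$ is full, $g \circ m = F(\rho)$ for some $\rho \colon W \to U$. As $g \in S$ and $S$ is downwards closed, $F(\rho) = g \circ m \in S$, hence $\rho \in F^*S$; and $g \circ t = F(\rho) \circ l$ factors through $F(\rho)$, so $g \circ t \in F_*(F^*S)$, i.e.\ $t \in g^*\bigl(F_*(F^*S)\bigr)$, as required.

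For \emph{continuity}, I must show that $P \circ F^{\op}$ is a $\T$-sheaf whenever $P$ is a $\T'$-sheaf. Let $S$ be a $\T$-covering sieve on $X$ and let $(x_f)_{f \in S}$ be a compatible family for $P \circ F^{\op}$. Since $F_*S$ is $\T'$-covering by Definition~\ref{def:inducedTopology} and $P$ is a sheaf, the plan is to transport $(x_f)$ to a compatible family on $F_*S$: for each $g \colon Y \to F(X)$ in $F_*S$, choose a factorization $g = F(h) \circ k$ with $h \in S$, and set $y_g := P(k)(x_h)$. Granting that $(y_g)_{g}$ is a well-defined compatible family, the sheaf property of $P$ yields a unique amalgamation $\xi \in P(F(X)) = (P \circ F^{\op})(X)$; that $\xi$ amalgamates $(x_f)$ follows from the factorization $F(f) = F(f) \circ \mathrm{id}$, which gives $y_{F(f)} = x_f$, and uniqueness is then routine.

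The main obstacle is the \textbf{well-definedness and compatibility of $(y_g)$}: the factorization $g = F(h) \circ k$ is not unique, and the map $k$ need not lie in the image of $F$, so the compatibility of $(x_f)$ — which lives in $\C$ — cannot be applied directly. I would resolve this exactly as in the cocontinuity argument, by restricting along the covering image sieve $S^F_Y$. Composing two competing factorizations $F(h_1) \circ k_1 = F(h_2) \circ k_2$ with any $m \colon F(W) \to Y$ lands the comparison maps $k_1 \circ m$ and $k_2 \circ m$ in the image of $F$ via fullness, where faithfulness turns the relation into an honest commuting square in $\C$ over $X$ with both legs in $S$; compatibility of $(x_f)$ then forces the two candidate values of $y_g$ to agree after restriction along every such $m$, and since $P$ is separated (being a sheaf) this upgrades to equality. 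The same device establishes compatibility of $(y_g)$, after which the argument concludes as above.
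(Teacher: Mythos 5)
Your proof is correct. Note, however, that the paper itself gives no proof of Lemma~\ref{lem:coverLiftingPreserving_induced}: it is one of the preliminaries stated without proof and deferred to \mathlib (ultimately to~\cite[Exposé~III]{SGA4}), so there is no argument in the text to compare yours against. What you supply is a complete direct verification, and both halves check out. For cocontinuity, unwinding Definition~\ref{def:inducedTopology} to the statement that $F_*(F^*S)$ is $\T'$-covering, then applying axiom~\ref{point:GroTop_closed} over $S$ and Lemma~\ref{lem:superset_covering} against the covering image sieve $S^F_Y$, is exactly the right reduction, and the inclusion $S^F_Y \subseteq g^*\bigl(F_*(F^*S)\bigr)$ via fullness is verified correctly. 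For continuity, you correctly identify the real issue --- well-definedness and compatibility of the transported family $(y_g)$ --- and the resolution (restrict along the covering sieve $S^F_Y$, use fullness and faithfulness to produce an honest commuting square in $\C$ with both legs in $S$, apply compatibility of $(x_f)$, then conclude by separatedness of $P$) is the standard and correct device. Two small points worth making explicit if you write this up: morphisms of $S^F_Y$ are arbitrary $t = m \circ l$ factoring through some $F(W)$, so the agreement you prove after composing with $m$ must be propagated to all of $S^F_Y$ by one further restriction along $l$ (immediate, but it is the sieve, not just the maps out of objects $F(W)$, over which separatedness is invoked); and the uniqueness of the amalgamation $\xi$ for $(x_f)$ follows because any competing amalgamation $\xi'$ satisfies $P(g)(\xi') = y_g$ for all $g \in F_*S$ and hence coincides with $\xi$ by uniqueness over $F_*S$ --- you call this routine, and it is, but it is the step that uses the sheaf (rather than merely separated) hypothesis on $P$ in full.
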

\begin{proof}[Proof of Proposition~\ref{prop:sheafEquiv}]
 To show that $\C$ is precoherent, let $(\pi_i\colon X_i\to B)_i$ be a finite effective epimorphic family in $\C$ and let $f\colon B'\to B$ be any morphism. The family $F(\pi_i)$ is finite effective epimorphic (in $\D$) by condition~\ref{pt:propSheafEquiv:condReflects}: then, the hypothesis that $\D$ is precoherent, applied to the morphism $F(f)\colon F(B')\to F(B)$, provides a finite effective epimorphic family $\varpi_j\colon Y_j\to F(B')$ whose components factor through some of the $F(\pi_i)$. By condition~\ref{pt:propSheafEquiv:condExist} there exist objects $(X'_j)_j$ in $\C$ together with effective epimorphisms $\varphi_j\colon F(X_j')\to Y_j$, that combine into an effective epimorphic family $F(X_j')\to F(B')$ thanks to Lemma~\ref{lem:effective-epi-transitive}; moreover, the morphisms in this family are of the form $F(\pi_j')$ for suitable $\pi_j'\colon X_j'\to B'$ because $F$ is fully faithful. Applying again condition~\ref{pt:propSheafEquiv:condReflects}, this family $(\pi_j')_j$ is finite effective epimorphic; that, for each $j$, the morphism $\pi_j'$ factors through some of the $\pi_i$ follows from the equivalent statement for the components of $\varpi_j$, combined once more with the full faithfulness of $F$. This establishes point~\ref{pt:propSheafEquiv:precoherent}.

 We claim that the topology on $\C$ induced by $F$ is the coherent topology. It suffices to show that given an object $X$ of $\C$, a sieve $S$ on $X$ is a covering for the induced topology if and only if it contains a finite effective epimorphic family. Suppose first that $S$ contains a finite effective epimorphic family $(\pi_i \colon Y_i \to X)_i$. By condition~\ref{pt:propSheafEquiv:condReflects}, the family $(F(\pi_i))_i$ is finite effective epimorphic, and is clearly contained in $F_*S$. Hence $F_*S$ is a covering sieve of $F(X)$ with respect to the coherent topology on $\D$ by Proposition~\ref{prop:mem-coherent-topology-iff}, which means that $S$ is a covering sieve with for the induced topology (see Definition~\ref{def:inducedTopology}). For the other direction, suppose that $S$ is a covering sieve for the induced topology: as for the first implication, this is equivalent to the condition that $F_*S$ contains a finite effective epimorphic family $(\pi_i \colon Z_i \to F(X))_i$. Condition~\ref{pt:propSheafEquiv:condExist} ensures that for every $i$, there is an effective epimorphism of the form $f_i \colon F(Y_i) \to Z_i$; moreover, since $\C$ is precoherent, we can apply Lemma~\ref{lem:effective-epi-transitive} to obtain that the family $(\pi_i \circ f_i \colon F(Y_i) \to F(X))_i$ is effective epimorphic. Since $F$ is full and reflects finite effective epimorphic families by condition~\ref{pt:propSheafEquiv:condReflects}, this family can be pulled back to a finite effective epimorphic family $(Y_i \to X)_i$ contained in $F^\ast(F_\ast S)$. We conclude thanks to Proposition~\ref{prop:sheafEquiv_of_cocontinuous_coverDense}.  

 Endowing $\D$ with the coherent topology, point~\ref{pt:propSheafEquiv:comp} is now immediate from Lemma~\ref{lem:coverLiftingPreserving_induced} (see Definition~\ref{def:functor_continuous_etc}).
  
  To finish the proof, we pass to point~\ref{pt:propSheafEquiv:equiv}, again endowing $\D$ with the coherent topology. By Proposition~\ref{prop:sheafEquiv_of_cocontinuous_coverDense}, it suffices to prove that $F$ is cover-dense, continuous and cocontinuous. By Lemma~\ref{lem:coverLiftingPreserving_induced} and the above discussion, it suffices to prove that $F$ is cover-dense.
  By Proposition~\ref{prop:mem-coherent-topology-iff}, it suffices to show that for every object $Y$ of $\D$, the $F$-image sieve $S^F_Y$ contains an effective epimorphism. Condition~\ref{pt:propSheafEquiv:condExist} ensures the existence of an object $X$ in $\C$ and of an effective epimorphism $F(X) \to Y$, that, by definition of the $F$-image sieve, belongs to $S^F_Y$.  
\end{proof}

\begin{remark}
  A finite-coproduct preserving functor between finitary extensive categories preserves (\emph{resp.} reflects) finite effective epimorphic families if and only if it preserves (\emph{resp.}~reflects) effective epimorphisms. This is because finite effective epimorphic families in extensive categories are precisely those that induce effective epimorphisms on the coproduct (see Lemmas~\ref{lem:effectiveEpi_of_family} and~\ref{lem:effectiveEpiFamily_of_effectiveEpi}). This observation yields variants (see for instance~\href{https://github.com/leanprover-community/mathlib4/blob/743032e7ead097fb3e8ae5cd02d29cdd8899161c/Mathlib/CategoryTheory/Sites/Coherent/SheafComparison.lean#L131-L141}{\faExternalLink}) of Proposition~\ref{prop:sheafEquiv} 
  in the case where the target is preregular and finitary extensive and the functor preserves certain pullbacks and coproducts, or when the target category is already finitary extensive.
\end{remark}

\section{Condensed mathematics}\label{sec:back_to_condensed}
We can now introduce condensed sets and prove the main theorems from our general categorical results. We begin with the following result:
\begin{proposition} \label{prop:preregular_finitary_extensive}
The categories $\CompHaus$, $\Profinite$ and $\Stonean$ are preregular and finitary extensive.
\end{proposition}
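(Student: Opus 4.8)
The plan is to treat the two properties separately, handling $\CompHaus$ and $\Profinite$ uniformly and singling out $\Stonean$, where the absence of general pullbacks forces a different argument. Throughout I use that, by Proposition~\ref{prop:CompHausEffectiveEpi}, the effective epimorphisms in all three categories are exactly the continuous surjections, so in particular they coincide with the epimorphisms.

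For preregularity (Definition~\ref{def:preregular_cat}), first consider $\CompHaus$ and $\Profinite$. Both categories have pullbacks, being closed under finite limits inside $\Top$ (a closed subspace of a product of compact Hausdorff spaces is compact Hausdorff, and total disconnectedness is inherited by subspaces and products). Given an effective epimorphism $g \colon Z \to Y$ and a morphism $f \colon X \to Y$, I would form the pullback $W = X \times_Y Z$ with its two projections $h \colon W \to X$ and $i \colon W \to Z$; these satisfy $g \circ i = f \circ h$ by construction. Since $g$ is surjective, so is its pullback $h$ (given $x \in X$, choose $z \in Z$ with $g(z) = f(x)$ to obtain the point $(x,z) \in W$), whence $h$ is again an effective epimorphism. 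For $\Stonean$ this argument fails because pullbacks of extremally disconnected spaces need not be extremally disconnected, but here every object is projective (Stonean spaces are exactly the projective objects of $\CompHaus$). Thus, given $g$ and $f$ as above, projectivity of $X$ lets me lift $f$ along the epimorphism $g$ to a morphism $i \colon X \to Z$ with $g \circ i = f$; taking $W = X$ and $h = \mathrm{id}_X$ (an effective epimorphism) then gives the required commutative square, since $f \circ h = f = g \circ i$.

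For finitary extensivity (Definition~\ref{def:finitary extensive}) I would verify the three conditions with coproducts computed as disjoint unions, exactly as in $\Top$. Finite coproducts stay in each category: a finite disjoint union of compact Hausdorff (resp.\ totally disconnected, resp.\ extremally disconnected) spaces is again of that kind --- for the Stonean case one checks that the closure of an open set in a finite disjoint union is computed summandwise and is therefore open. Pullbacks along the coprojections $X \to X \coprod Y$ also exist: such a coprojection is a clopen embedding, so the pullback of any $Z \to X \coprod Y$ along it is the (clopen) preimage of $X$ in $Z$, and a clopen subspace of a totally disconnected (resp.\ extremally disconnected) space is again of that kind.

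The remaining and most delicate point is the van Kampen condition~\ref{pt:def_finitary_extensive:diagram}. Its content is that any morphism into $X \coprod Y$ splits its source as the disjoint union of the clopen preimages of $X$ and $Y$; this holds in $\Top$, which is extensive, and the relevant coproduct and pullback diagrams are computed in each of our three categories exactly as in $\Top$, with all intermediate objects remaining in the subcategory by the closure properties just established. Hence the condition transfers from $\Top$. I expect this transfer --- making precise that the full inclusions into $\Top$ reflect the van Kampen property for these specific diagrams --- to be the main obstacle, together with the bookkeeping that all the disjoint unions and clopen subspaces that arise remain totally disconnected, respectively extremally disconnected.
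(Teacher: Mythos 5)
Your proposal is correct and follows essentially the same route as the paper: preregularity via pullbacks of surjections in $\CompHaus$ and $\Profinite$ and via projectivity (lifting along the epimorphism, with the identity as the new effective epimorphism) in $\Stonean$, and finitary extensivity by transfer from $\Top$ along the inclusion functors, with the key special observation that pullbacks along coprojections exist in $\Stonean$ because coprojections are clopen embeddings. The ``transfer'' step you flag as the main obstacle is exactly what the paper handles by citing a general \mathlib lemma about functors that preserve and reflect the relevant coproducts and pullbacks.
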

\begin{proof}
    \href{https://github.com/leanprover-community/mathlib4/blob/743032e7ead097fb3e8ae5cd02d29cdd8899161c/Mathlib/Topology/Category/CompHaus/EffectiveEpi.lean#L87-L95}{\faExternalLink}
    \href{https://github.com/leanprover-community/mathlib4/blob/743032e7ead097fb3e8ae5cd02d29cdd8899161c/Mathlib/Topology/Category/Profinite/EffectiveEpi.lean#L104}{\faExternalLink}
    \href{https://github.com/leanprover-community/mathlib4/blob/743032e7ead097fb3e8ae5cd02d29cdd8899161c/Mathlib/Topology/Category/Stonean/EffectiveEpi.lean#L97}{\faExternalLink}
    \href{https://github.com/leanprover-community/mathlib4/blob/743032e7ead097fb3e8ae5cd02d29cdd8899161c/Mathlib/Topology/Category/CompHaus/Limits.lean#L237-L238}{\faExternalLink}
    \href{https://github.com/leanprover-community/mathlib4/blob/743032e7ead097fb3e8ae5cd02d29cdd8899161c/Mathlib/Topology/Category/Profinite/Limits.lean#L230-L231}{\faExternalLink}
    \href{https://github.com/leanprover-community/mathlib4/blob/743032e7ead097fb3e8ae5cd02d29cdd8899161c/Mathlib/Topology/Category/Stonean/Limits.lean#L321-L322}{\faExternalLink}
Let $\C$ denote any of the categories $\CompHaus$, $\Profinite$ or $\Stonean$. Note that the effective epimorphisms in $\C$ are precisely the continuous surjections (Proposition~\ref{prop:CompHausEffectiveEpi}). These also coincide with the epimorphisms, by Lemma~\ref{lemma:epi_iff_surj}. Given the explicit description of pullbacks in $\Profinite$ and $\CompHaus$, we immediately obtain that effective epimorphisms can be pulled back, and therefore $\Profinite$ and $\CompHaus$ are preregular. To see that $\Stonean$ is preregular, we use the fact that every object in $\Stonean$ is projective, and hence every epimorphism can be pulled back to the identity. 

We also need to show that $\C$ is finitary extensive. In \mathlib it was already proved that the category of all topological spaces is finitary extensive, and that given a functor $F \colon \C \to \D$ to a finitary extensive category which preserves and reflects finite coproducts, preserves pullbacks along coprojections in finite coproducts and reflects pullbacks, if $\C$ has finite coproducts and pullbacks along coprojections, then $\C$ is finitary extensive. To see that $\C$ together with its inclusion functor to the category of topological spaces has these properties, the only point which needs clarification is the case for pullbacks in $\Stonean$. We know that $\Stonean$ does not have all pullbacks, but in the very special case of coprojections in finite coproducts, it does. Indeed, these are clopen embeddings, in which case the pullback is identified with the preimage of the image of the embedding.
\end{proof}

\begin{definition}\label{def:condensed set}
    \href{https://github.com/leanprover-community/mathlib4/blob/743032e7ead097fb3e8ae5cd02d29cdd8899161c/Mathlib/Condensed/Basic.lean#L38-L43}{\faExternalLink}
    A \emph{condensed set} is a sheaf for the coherent topology on $\CompHaus$. (We refer the reader to Definition~\ref{def:coherent coverage} for the definition of coherent topology.)
\end{definition}

\begin{remark}
Thanks to Theorem~\ref{thm:condensed set equivalence} below, a condensed set can be defined as a sheaf for the coherent topology on $\Profinite$ or $\Stonean$.
\end{remark}
\begin{theorem} \label{thm:sheaves}
\href{https://github.com/leanprover-community/mathlib4/blob/743032e7ead097fb3e8ae5cd02d29cdd8899161c/Mathlib/CategoryTheory/Sites/Coherent/SheafComparison.lean#L235-L242}{\faExternalLink}
\href{https://github.com/leanprover-community/mathlib4/blob/743032e7ead097fb3e8ae5cd02d29cdd8899161c/Mathlib/CategoryTheory/Sites/Coherent/SheafComparison.lean#L249-L253}{\faExternalLink}
    \begin{list_cases}
    \item  When $\C$ is $\CompHaus$ or $\Profinite$, a presheaf $X \colon \C^{\op} \to \Set$ is a sheaf for the coherent topology on~$\C$ if and only if it satisfies the following two conditions:\label{main_thm:pt_comphaus}
    \begin{list_conditions}
        \item $X$ preserves finite products: in other words, for every finite family $(T_i)$ of object of $\C$, the natural map 
        \[
            X\Bigl(\coprod_i T_i\Bigr) \longrightarrow \prod_i X(T_i)    
        \]
        is a bijection.\label{main_thm:pt_comphaus:cond_bij}
        \item For every surjection $\pi \colon S \to T$ in $\C$, the diagram 
        \[\begin{tikzcd}
            {X(T)} & {X(S)} & {X(S \times_T S)}
            \arrow["{X(\pi)}", from=1-1, to=1-2]
            \arrow[shift left, from=1-2, to=1-3]
            \arrow[shift right, from=1-2, to=1-3]
        \end{tikzcd}\]
        is an equalizer \textup{(}the two parallel morphisms being given by the projections in the pullback\textup{)}. \label{main_thm:pt_comphaus:cond_equalizer}
    \end{list_conditions}
    \item A presheaf $X \colon \Stonean^{\op} \to \Set$ is a sheaf for the coherent topology on $\Stonean$ if and only if it preserves finite products: in other words, for every finite family $(T_i)$ of object of $\C$, the natural map 
    \[
        X\Bigl(\coprod_i T_i\Bigr) \longrightarrow \prod_i X(T_i)    
    \]
    is a bijection.\label{main_thm:pt_stonean}
\end{list_cases}
\end{theorem}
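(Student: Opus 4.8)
The plan is to deduce both parts directly from the general sheaf characterizations established in Section~\ref{sec:sheaves}, reducing the theorem to verifying that the three categories meet the abstract hypotheses of those results. All of these verifications are supplied by Proposition~\ref{prop:preregular_finitary_extensive} and Proposition~\ref{prop:CompHausEffectiveEpi}, so the proof amounts to assembling the right combination of previously proved statements.

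For part~\ref{main_thm:pt_comphaus}, I would first record that $\CompHaus$ and $\Profinite$ are preregular and finitary extensive by Proposition~\ref{prop:preregular_finitary_extensive}, and that both are complete, so in particular they possess kernel pairs of every morphism. The hypotheses of Proposition~\ref{prop:regular_extensive_sheaf} are therefore satisfied, and that proposition tells us that a presheaf $X$ is a sheaf for the coherent topology if and only if it preserves finite products and satisfies the equalizer condition~\eqref{eq:kernel_pair_sheaf} for every effective epimorphism. The finite-product clause is exactly condition~\ref{main_thm:pt_comphaus:cond_bij}. To match~\eqref{eq:kernel_pair_sheaf} with condition~\ref{main_thm:pt_comphaus:cond_equalizer}, I would invoke Proposition~\ref{prop:CompHausEffectiveEpi}, which identifies the effective epimorphisms in $\CompHaus$ and $\Profinite$ with the continuous surjections; hence quantifying the equalizer diagram $X(T) \to X(S) \rightrightarrows X(S \times_T S)$ over effective epimorphisms $\pi\colon S \to T$ is the same as quantifying it over all surjections. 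This settles part~\ref{main_thm:pt_comphaus}.

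For part~\ref{main_thm:pt_stonean}, I would again use Proposition~\ref{prop:preregular_finitary_extensive} to obtain that $\Stonean$ is preregular and finitary extensive, and then observe that every object of $\Stonean$ is projective. The latter follows from the fact, recalled in the Introduction, that Stonean spaces are precisely the projective objects of $\CompHaus$: given an epimorphism $e$ in $\Stonean$ (which is a surjection by Lemma~\ref{lemma:epi_iff_surj}) and a morphism out of a Stonean space, projectivity in $\CompHaus$ yields a lift, and this lift already lies in $\Stonean$ since the inclusion is full. The hypotheses of Proposition~\ref{prop:regular_extensive_sheaf_projective} are thus met, and that proposition immediately gives that a presheaf on $\Stonean$ is a sheaf for the coherent topology if and only if it preserves finite products.

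Since essentially all of the mathematical content has been front-loaded into the general propositions, I do not expect any step to present a genuine obstacle; the work is confined to the bookkeeping checks that the specific categories satisfy the abstract hypotheses. The most delicate of these is the verification that every object of $\Stonean$ is projective \emph{as computed inside $\Stonean$} rather than merely inside $\CompHaus$, which relies on fullness of the inclusion together with the surjectivity of epimorphisms; the identification of effective epimorphisms with surjections and the existence of kernel pairs in $\CompHaus$ and $\Profinite$ are routine.
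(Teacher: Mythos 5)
Your proposal is correct and follows essentially the same route as the paper: apply Proposition~\ref{prop:regular_extensive_sheaf} to $\CompHaus$ and $\Profinite$ (which have all pullbacks) and Proposition~\ref{prop:regular_extensive_sheaf_projective} to $\Stonean$ (where every object is projective), with the hypotheses supplied by Propositions~\ref{prop:preregular_finitary_extensive} and~\ref{prop:CompHausEffectiveEpi}. Your explicit translation between effective epimorphisms and surjections, and your check that projectivity of Stonean spaces descends from $\CompHaus$ to $\Stonean$ via fullness, are details the paper leaves implicit but are exactly the right glue.
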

\begin{proof}
In the case when $\C$ is $\CompHaus$ or $\Profinite$, it has all pullbacks and we obtain the characterization from Proposition~\ref{prop:regular_extensive_sheaf}. In the case of $\Stonean$, since every object is projective, we obtain the characterization from Proposition~\ref{prop:regular_extensive_sheaf_projective}. 
\end{proof}
\begin{remark}
    A detailed proof of Theorem~\ref{thm:sheaves} is given in~\cite[Theorems~1.2.17 and~1.2.18]{msc-dagur}.
\end{remark}
\begin{remark}
    A condition similar to the one in point~\ref{main_thm:pt_comphaus} of Theorem~\ref{thm:sheaves} above holds true when $\C$ is $\Stonean$ as well, except that condition~\ref{main_thm:pt_comphaus}-\ref{main_thm:pt_comphaus:cond_bij} must be modified slightly (for example, using $1$-hypercovers) due to the fact that $\Stonean$ does not have pullbacks. The content of~\ref{main_thm:pt_stonean} is that this analogous condition turns out to be vacuously true in $\Stonean$.
\end{remark}
\begin{theorem} \label{thm:condensed set equivalence}
    \href{https://github.com/leanprover-community/mathlib4/blob/743032e7ead097fb3e8ae5cd02d29cdd8899161c/Mathlib/Condensed/Equivalence.lean#L86-L92}{\faExternalLink}
    \href{https://github.com/leanprover-community/mathlib4/blob/743032e7ead097fb3e8ae5cd02d29cdd8899161c/Mathlib/Condensed/Equivalence.lean#L43-L49}
    {\faExternalLink}
    The inclusion functors $\Profinite \to \CompHaus$ and $\Stonean \to \CompHaus$ induce equivalences of categories between the categories of sheaves for the coherent topology on $\CompHaus$, $\Profinite$, and $\Stonean$.
\end{theorem}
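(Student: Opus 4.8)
The plan is to apply Proposition~\ref{prop:sheafEquiv} to each of the two inclusion functors $F \colon \Profinite \to \CompHaus$ and $F \colon \Stonean \to \CompHaus$, taking the target $\D = \CompHaus$ in both cases. By Proposition~\ref{prop:preregular_finitary_extensive} together with Proposition~\ref{prop:precoherent_of_preregular_extensive}, the category $\CompHaus$ is precoherent, so the standing hypothesis on the target is met. Each inclusion is fully faithful, being the inclusion of a full subcategory. It then remains to verify conditions~\ref{pt:propSheafEquiv:condReflects} and~\ref{pt:propSheafEquiv:condExist} of Proposition~\ref{prop:sheafEquiv}; once this is done, point~\ref{pt:propSheafEquiv:equiv} of that proposition yields equivalences $\Sh(\CompHaus) \simeq \Sh(\Profinite)$ and $\Sh(\CompHaus) \simeq \Sh(\Stonean)$ induced by precomposition with the respective inclusions, and composing these through $\CompHaus$ delivers the full triangle of equivalences asserted in the statement.

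For condition~\ref{pt:propSheafEquiv:condReflects}, I would first invoke the remark following Proposition~\ref{prop:sheafEquiv}: since all three categories are finitary extensive and the inclusion functors preserve finite coproducts (finite coproducts in $\Profinite$ and $\Stonean$ are computed as in $\CompHaus$, namely as finite disjoint unions, which remain profinite, \emph{resp.} Stonean), it suffices to check that the inclusions preserve and reflect \emph{effective epimorphisms} rather than whole effective epimorphic families. By Proposition~\ref{prop:CompHausEffectiveEpi}, the effective epimorphisms in each of $\CompHaus$, $\Profinite$, and $\Stonean$ are exactly the continuous surjections. As the inclusion functors act as the identity on underlying topological spaces and continuous maps, a morphism is a continuous surjection in the subcategory if and only if its image is a continuous surjection in $\CompHaus$; hence effective epimorphisms are both preserved and reflected.

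The crux of the argument is condition~\ref{pt:propSheafEquiv:condExist}, which asks that every compact Hausdorff space $Y$ receive an effective epimorphism—i.e.\ a continuous surjection—from the image of an object of the subcategory. For $\Stonean$ this is precisely the existence of projective covers in $\CompHaus$: by Gleason's theorem (see~\cite{gleason}) the projective objects of $\CompHaus$ are the extremally disconnected spaces, and every compact Hausdorff space admits a continuous surjection from such a space. Since every Stonean space is in particular profinite, the very same surjection also exhibits $Y$ as a continuous-surjective image of a profinite space, settling condition~\ref{pt:propSheafEquiv:condExist} for the inclusion $\Profinite \to \CompHaus$ as well. This geometric input—the existence of extremally disconnected presentations of arbitrary compact Hausdorff spaces—is the main obstacle and essentially the only nonformal ingredient; everything else is bookkeeping that is already packaged into Proposition~\ref{prop:sheafEquiv}.
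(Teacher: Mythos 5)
Your proposal is correct and follows essentially the same route as the paper: apply Proposition~\ref{prop:sheafEquiv} to the two inclusion functors, check that effective epimorphisms (which are the continuous surjections in all three categories by Proposition~\ref{prop:CompHausEffectiveEpi}) are preserved and reflected, and settle condition~\ref{pt:propSheafEquiv:condExist} by the existence of extremally disconnected covers of compact Hausdorff spaces. The only cosmetic difference is that the paper exhibits the cover explicitly as $\beta S^\delta \to S$ via Stone--\Cech compactification of the discretization, whereas you cite Gleason's theorem for its existence; you also spell out more carefully than the paper does why preserving and reflecting single effective epimorphisms suffices for the condition on families.
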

\begin{proof}
    We are going to apply Proposition~\ref{prop:sheafEquiv}. We spell out the case of $\Stonean$ here, the other one being similar. It is clear that the inclusion functor preserves and reflects effective epimorphisms (by the characterization of effective epimorphisms as continuous surjections). Verifying the other condition in the theorem amounts to proving that $\CompHaus$ has enough projectives. Given a compact Hausdorff space $S$, denote by $S^\delta$ the set $S$ equipped with the discrete topology. Then the Stone--\Cech compactification $\beta S^\delta$ is a projective object with a continuous surjection $\beta S^\delta \to S$.
\end{proof} 

\section*{Acknowledgments}
This work began when the five authors gathered in Copenhagen for the \emph{Masterclass: Formalisation of Mathematics} that took place in June 2023. We are grateful to Kevin Buzzard for the lectures he delivered jointly with one of the authors (A.~T.) and to Boris Kjær who co-organized the masterclass with another author (D.~A.). Financial support for the masterclass was provided by the Copenhagen Centre for Geometry and Topology (GeoTop) and D.~A.~ acknowledges funding from GeoTop through grant CPH-GEOTOP-DNRF151. N.~K.~was supported by the Research Council of Norway grant 302277 -- \emph{Orthogonal gauge duality and non-commutative geometry}. F.~N.~ was supported by a \emph{projet émergent} from Labex MILYON/ANR-10-LABX-0070. A.~T.~is funded by NSERC discovery grant RGPIN-2019-04762.

All authors wish to express their gratitude to the whole \mathlib community for their support and interest in this work, and in particular to Johan Commelin and to Joël Riou for reviewing and improving many pull requests related to the work presented here.

\bibliographystyle{plainnat}
\bibliography{references}

\end{document}